\definecolor{red}{rgb}{1,0,0}
\definecolor{green}{rgb}{0,1,0}
\definecolor{blue}{rgb}{0,0,1}
\definecolor{refkey}{gray}{.625}
\definecolor{labelkey}{gray}{.625}
 \def\title@font{\normalsize\bfseries}
 \let\ltx@maketitle\@maketitle
 \def\@maketitle{\bgroup%
 \let\ltx@title\@title%
 \def\@\title{\resizebox{\textwidth}{!}{%
  \mbox{\title@font\ltx@title}%
 }}%
 \ltx@maketitle%
 \egroup}
\theoremstyle{plain}
\newtheorem*{zorn*}{Zorn's lemma}
\newtheorem*{tychonoff*}{Tychonoff's theorem}
\newtheorem{definition}{Definition}[section]
\newtheorem{Cor}[definition]{Corollary}
\newtheorem{Thm}[definition]{Theorem}
\newtheorem*{theorem*}{Theorem}
\newtheorem{Def}[definition]{Definition}
\newtheorem{def-prop}[definition]{Definition-Proposition}
\newtheorem{prop}[definition]{Proposition}
\newtheorem{prop-def}[definition]{Proposition-Definition}
\newtheorem{Ex}[definition]{Example}
\newtheorem{Rem}[definition]{Remark}
\DeclareMathOperator{\id}{id}
\DeclareMathOperator{\Span}{span}
\DeclareMathOperator{\CDO}{CDO}
\DeclareMathOperator{\End}{End}
\DeclareMathOperator{\D}{\mathcal{D}}
\newcommand{\XX}{\mathfrak{X}}
\newcommand{\R}{\mathbb{R}}
\newcommand{\circnabla}{\overset{\nabla}{\circ}}
\newcommand{\rnabla}{R^\nabla  }
\newcommand{\cnabla}{C^\nabla }
\newcommand{\circn}{\circ_n}
\newcommand{\AAstar}{A\oplus A^*}
\newcommand{\MMstar}{TM\oplus T^*M}
\newcommand{\MM}{TM\oplus TM}
\newcommand{\DeltaTM}{\Delta^{TM}}
\newcommand{\rhoA}{a_A}
\newcommand{\anchorn}{a_A}
\newcommand{\dA }{\mathrm{d}_A}
\newcommand{\dAstar}{\mathrm{d}_*}
\newcommand{\dd}{\mathrm{d}}
\newcommand{\derX}{\widehat{X}}
\newcommand{\derx}{\widehat{x}}
\newcommand{\derad}{\widehat{\xi}}
\newcommand{\derY}{\widehat{Y}}
\newcommand{\dery}{\widehat{y}}
\newcommand{\dera}{\widehat{v}}
\newcommand{\derZ}{\widehat{Z}}
\newcommand{\derz}{\widehat{z}}
\newcommand{\dere}{\widehat{u}}
\newcommand{\DA}{\CDO(A)}
\newcommand{\DE}{\CDO(E)}
\newcommand{\langg}[2]{\langle #1 \mid #2 \rangle}
\newcommand{\langr}[2]{g( #1 , #2 )}
\newcommand{\Langr}[2]{g\big( #1 , #2 \big)}
\newcommand{\lang}[2]{\langle #1 | #2 \rangle}
\newcommand{\Lang}[2]{\big\langle #1 \big| #2 \big\rangle}
\newcommand{\Br}[2]{[ #1 , #2 ]}
\newcommand{\BrM}[2]{[ #1 , #2 ]}
\newcommand{\BrL}[2]{[ #1 , #2 ]_{L}}
\newcommand{\BrCDO}[2]{[ #1 , #2 ]}
\newcommand{\Brn}[2]{[ #1 , #2 ]^{\nabla}}
\newcommand{\BrA}[2]{[ #1 , #2 ]_A}
\newcommand{\BrAstar}[2]{[ #1 , #2 ]_*}
\newcommand{\BrAn}[2]{[#1,#2]_A}
\newcommand{\nM}{\nabla^{TM}}
\newcommand{\nMstar}{\nabla^{T^*M}}
\newcommand{\nA}{\nabla^{A}}
\newcommand{\nnAA}{\nabla^{AA}}
\newcommand{\nnAAstar}{\nabla^{AA^*}}
\newcommand{\nnAstarA}{\nabla^{A^*A}}
\newcommand{\nnAstarAstar}{\nabla^{A^*A^*}}
\newcommand {\emptycomment}[1]{}
\newcommand{\tobefilledin}{\,\cdot\,}
\newcommand{\equalbyreason}[1]{\xlongequal[]{\mbox{#1}}}
\newcommand{\CinfM}{C^\infty(M)}
\newcommand{\secM}{\Gamma(TM)}
\newcommand{\secMMstar}{\Gamma(TM\oplus T^*M)}
\newcommand{\secMM}{\Gamma(TM\oplus TM)}
\newcommand{\secA}{\Gamma(A)}
\newcommand{\secAstar}{\Gamma(A^*)}
\newcommand{\secAAstar}{\Gamma(A\oplus A^*)}
\newcommand{\secE}{\Gamma(E)}
\begin{document}
\title{From   $n$-systems to Lie   and Courant algebroids}

\author{Liqiang Cai}
\address{School of Mathematics and Statistics, Henan University} 
\email{\href{mailto:cailiqiang@vip.henu.edu.cn}{cailiqiang@vip.henu.edu.cn}}

\author{Zhuo Chen}
\address{Department of Mathematical Sciences, Tsinghua University} 
\email{\href{mailto:chenzhuo@tsinghua.edu.cn}{chenzhuo@tsinghua.edu.cn}}

\author{Zhixiong Chen}
\address{College of Science, China Agricultural University}
\email{\href{mailto: zxchen@cau.edu.cn}{zxchen@cau.edu.cn}}


\author{Yanhui Bi$^*$}
\address{Center for Mathematical Sciences, College of Mathematics and Information Science,
Nanchang Hangkong University}
\email{\href{mailto: biyanhui0523@163.com}{\textrm{(corresponding author)} biyanhui0523@163.com}}

\begin{abstract}
This paper introduces a method for constructing pure algebroids, dull algebroids, and Lie algebroids. The construction relies on what we defined as $n$-systems on vector bundles, and we provide explicit computations for all resulting structure maps. Analogously, metric $n$-systems defined on metric vector bundles allow us to construct metric algebroids, pre-Courant algebroids, and Courant algebroids.\end{abstract}

\maketitle

\tableofcontents
\section{Introduction}
\subsection{The motivation}
Lie algebroids, initially introduced by Pradines \cite{PR}, furnish a rigorous framework for describing the infinitesimal structure of Lie groupoids. The reader who wishes to pursue the topic of Lie algebroids and groupoids is referred to Mackenzie’s  book \cite{Ma2} (see also \cite{CW,Ma1}) for foundational materials. In short, a Lie algebroid $(A, \BrA{\tobefilledin}{\tobefilledin}, a_A)$ consists of a vector bundle $A$ (over a base manifold $M$), a Lie bracket $\BrA{\tobefilledin}{\tobefilledin}$ on the section space $\Gamma(A)$, and an anchor map $a_A: A\to TM$ which are compatible with the Lie bracket. 
Furthermore, any Lie algebroid $(A, \BrA{\tobefilledin}{\tobefilledin},a_A)$ admits an equivalent characterization as homological vector fields of degree 1 on the graded manifold $(M, \wedge^\bullet A^*)$ (see \cite{Vaintrob}). Thus, Lie algebroids have important research in  algebraic geometry \cite{Br,CRB,CB} and differential geometry \cite{PM}, while also having some applications in analytical mechanics \cite{GG,GGU}. \\

\hspace*{1em}Courant algebroids, introduced in \cite{LWX}, unify the concept of Lie bialgebras and the bracket on $TM\oplus T^*M$ first discovered by Courant \cite{Courant}, where $M$ is an arbitrary smooth manifold. Around the same time and developed independently, the Dorfman bracket \cite{Dor}, introduced in the context of Hamiltonian structures, is an alternation of  the Courant bracket. Roytenberg provided an equivalent definition using the Dorfman bracket \cite{R}, which gives the relation of Courant algebroids to $L_\infty$-algebras \cite{R1}. In this paper, we use the notation  $(E, \langle \cdot, \cdot \rangle, \circ, a_E)$ to denote a Courant algebroid. In specific, we have a pseudo-Euclidean vector bundle $(E, \langle \cdot, \cdot \rangle)$ (over a base manifold $M$), equipped with a vector bundle morphism $a_E: E \to TM$, also called the anchor, and a Leibniz algebra structure $\circ$, called the Dorfman bracket, defined on the section space $\Gamma(E)$, subject to specific compatibility conditions. The subject of Courant algebroids has inspired significant research in higher structure \cite{BR}, Poisson geometry \cite{BCSX,BP,CS,GMX,MX} and generalize geometry \cite{VCLD,MGF}, while also having some important applications in 3-dimensional topological field theory \cite{HP,I1,I2}, string theory \cite{De} and double field theory \cite{Vaisman-2}. 

In a prior work \cite{CCLX}, we presented a  construction of split Courant algebroids utilizing Dirac generating operators. While this provides a theoretical framework, many Lie algebroids and Courant algebroids considered in the literature remain purely abstract concepts. Therefore, it is pertinent to seek concrete examples of these algebraic objects where their structures can be explicitly formulated and analyzed. This paper addresses this need by introducing the concept of $n$-systems and demonstrating how these $n$-systems can be used to generate concrete examples of both Lie algebroids and Courant algebroids. \\    

\subsection{Structures and main results of the paper}

An anchored bundle, denoted $(A, a_A)$, is the fundamental object of study in the initial section. It consists of a vector bundle $A$ over a base manifold $M$, equipped with a bundle map $a_A: A \to TM$, termed the anchor. At this stage, $(A, a_A)$ lacks a Lie structure. We aim to investigate the conditions under which a Lie bracket, denoted by $[\cdot, \cdot] $, can be defined on $\Gamma(A)$, the space of sections of $A$, such that $(A, [\cdot, \cdot] , a_A)$ constitutes a Lie algebroid. Our approach involves employing connections on the anchored bundle, drawing inspiration from the role of connections in differential geometry on smooth manifolds (see Definition \ref{def-connection}). We denote such a connection on $(A, a_A)$ by $\nabla$, and we seek a bracket on $\Gamma(A)$ of the torsion-free form:
\begin{equation}\label{torsionfree}
[x,y]^\nabla=\nabla_xy-\nabla_yx,\quad \forall x,y\in \Gamma(A).
\end{equation}
Indeed, it can be demonstrated that any Lie algebroid structure compatible with $(A, a_A)$ must adopt this form (see equation $(7)$ in Section \ref{notation} and Proposition \ref{pure-connection-correspondence}). In this scenario, the connection $\nabla$ is constrained by a condition referred to as the Lie-Bianchi identity (see Proposition \ref{Lie-Bianchi identity}).

 The primary contribution of this paper is the introduction of a method for constructing anchored bundles, connections, and their associated Lie algebroid structures. This construction relies on a specific set of data, termed an ``$n$-system" on $A$, denoted by $(\derX_1,\ldots,\derX_n;\derad^1,\ldots,\derad^n)$. Here, $\derX_1$, $\ldots$, $\derX_n$ are covariant differential operators acting on $A$, while $\derad^1$, $\ldots$, $\derad^n$ are elements of $\secAstar$, linearly independent over $\CinfM$. From a given $n$-system, we define an anchor map $a_A: A\to TM$ by
$$\anchorn(y)  =  \Lang{y}{\derad^i} \cdot \derx_i, \forall y\in \Gamma(A),  $$
where $\derx_i\in \Gamma(TM)$ represents the symbol of $\derX_i$. Simultaneously, a connection $\nA$ is defined as:
$$
\nA_yz := \Lang{y}{\derad^i} \cdot \derX_i(z),\hspace*{2em} \forall y, z \in \secA.
$$
Consequently, the bracket operation on $\Gamma(A)$, defining the Lie algebroid structure on $A$, is given explicitly by:
\begin{eqnarray*}  \BrAn{y}{z} &=& \nA_yz - \nA_zy = \Lang{y}{\derad^i} \cdot \derX_i(z) - \Lang{z}{\derad^i} \cdot \derX_i(y),\hspace*{2em}\forall y, z \in \secA. 
\end{eqnarray*}

We note that the Lie algebroid structure of $T^*M$ constructed in \cite{DobJak} is a special case derived from a particular $1$-system. 

The satisfaction of  Lie algebroid axioms by the aforementioned structures on $A$ requires specific conditions for the said $n$-system. Proposition \ref{Lie algebroid} provides a sufficient condition, expressed by Equations \eqref{Eqn:compatible condition 1 of Lie algebroid in n system} and \eqref{Eqn:compatible condition 2 of Lie algebroid in n system}. When these two equations hold, the $n$-system $(\derX_1,\ldots,\derX_n;\derad^1,\ldots,\derad^n)$ is termed a ``compatible $n$-system".

   In conclusion, any compatible $n$-system gives rise to an associated Lie algebroid with explicitly defined structure maps. Moreover, the language of compatible $n$-systems allows us to characterize certain DG manifolds and Lie pairs, as demonstrated in Sections \ref{DG} and \ref{Lie pair}. In Section \ref{special type example}, we further investigate a special type of compatible $n$-system, providing a characterization of its associated Lie algebroid in terms of explicit bases and functions, alongside concrete examples.

Analogously to Lie algebroids, Courant algebroids can also be constructed using a similar method. In the second part of this paper, we will elaborate on this construction, beginning with an arbitrary anchored metric bundle $(E, \langle \cdot, \cdot \rangle, a_E)$ over a base manifold $M$. Here, $(E, a_E)$ denotes an anchored bundle equipped with a metric $\langle \cdot, \cdot \rangle$ on $E$. We aim to utilize connections on this anchored metric bundle, specifically metric connections, denoted by $\nabla$ (see Section \ref{sec 3.1}). These connections are required to be compatible with the metric $\langle \cdot, \cdot \rangle$. Notably, our metric connection is a specific instance of the generalized connections introduced in \cite{CPR}.

A key distinction between the Lie bracket and the Courant bracket lies in the skew-symmetry property, which is satisfied by the former but not by the latter. Consequently, a Courant bracket on $\Gamma(E)$ cannot be expressed in the form of equation \eqref{torsionfree}. Instead, we will demonstrate that the Courant bracket necessarily takes the form:
\[
e_1\circnabla e_2=\nabla_{e_1}e_2-\nabla_{e_2}e_1+\Delta_{e_2}e_1,
\]
where the operator $\Delta:~\Gamma(E)\times\Gamma(E)\rightarrow\Gamma(E)$ is determined by $\nabla$ according to the relation:
\[\langle\Delta_{e_2}e_1, e_3\rangle:=\langle\nabla_{e_3}e_1, e_2\rangle,\quad\forall e_1,e_2,e_3\in\Gamma(E).\]
Proposition \ref{prop:surj-Metric algebroid} provides a proof of this result. Following this, Section \ref{three typical examples} examines three specific examples of anchored metric bundles, analyzing their metric, pre-Courant, and Courant algebroid structures.

To construct metric connections and their associated Courant algebroids, we introduce a method analogous to our previous construction of Lie algebroids from $n$-systems for vector bundles. Specifically, given a metric vector bundle $(E, \langle \cdot, \cdot \rangle)$, we define a  \emph{metric $n$-system}  as a tuple $(\derZ_1,\ldots,\derZ_n;\dere^1,\ldots,\dere^n)$, where $\derZ_1, \ldots, \derZ_n$ are metric-compatible covariant differential operators on $E$, and $\dere^1, \ldots, \dere^n$ are elements of $\Gamma(E)$ that are linearly independent over $\CinfM$. From such a metric $n$-system, we construct a metric connection $\nabla$ and an operator $\Delta$ on $\Gamma(E)$ as follows:
\[ \nabla_{e_1}e_2
:=\Lang{e_1}{\dere^i}\cdot\derZ_i(e_2), \quad \Delta_{e_2}e_1:=\Lang{\derZ_i(e_1)}{e_2}\cdot\dere^i,  \]
for all $e_1, e_2 \in \Gamma(E)$. Consequently, we formulate the associated Courant algebroid structure on $E$ as follows:
\begin{itemize}
    \item The anchor map $a_E: E \to TM$ is defined by $a_E(e) = \Lang{e}{\dere^i} \cdot \derz_i$, and the operator $\D: C^\infty(M) \to \Gamma(E)$ is given by $\D(f) := \derz_i(f) \cdot \dere^i$, where $\derz_i\in \Gamma(TM)$ denotes the symbol of $\derZ_i$;
    \item The Dorfman bracket $\circnabla: \Gamma(E) \times \Gamma(E) \to \Gamma(E)$ is defined by
    \[e_1\circnabla e_2
=\Lang{e_1}{\dere^i}\cdot\derZ_i(e_2)
-\Lang{e_2}{\dere^i}\cdot\derZ_i(e_1)
+\Lang{\derZ_i(e_1)}{e_2}\cdot \dere^i,\]
\end{itemize}
for all $f\in C^\infty(M)$ and $e,e_1,e_2\in \Gamma(E)$.

To ensure that the aforementioned structures satisfy the axioms of a Courant algebroid, we invoke Proposition \ref{Courant algebroid}, which provides a sufficient condition. This condition is articulated through the satisfaction of three equations: \eqref{Eqn: compatible condition 1 of Courant algebroid in n system}, \eqref{Eqn: compatible condition 3 of Courant algebroid in n system}, and \eqref{Eqn: compatible condition 2 of Courant algebroid in n system}. When these three equations hold, the system $(\derZ_1,\ldots,\derZ_n;\dere^1,\ldots,\dere^n)$ is defined as a compatible metric $n$-system. Consequently, given any such compatible metric $n$-system, we can conclude that there exists an associated Courant algebroid, the structure maps of which can be explicitly formulated.

 In Sections \ref{Lie bialgebroids from metric} and \ref{Manin pairs and dorfman connections}, we present applications of our findings. Specifically, Section \ref{Lie bialgebroids from metric} focuses on a particular class of compatible metric $n$-systems, characterizing the Courant algebroid structures they induce and providing illustrative examples. Furthermore, Section \ref{Manin pairs and dorfman connections} demonstrates the utility of compatible metric $n$-systems in characterizing certain Manin pairs and Dorfman connections.

\subsection*{Acknowledgements}
We would like to thank Honglei Lang, Zhangjiu Liu, Ping Xu, and Maosong Xiang    for fruitful discussions and useful comments.
 This research is partially supported by NSFC grants 11961049, 10601219, 
12071241, 11701146, and Key Project of Jiangxi Natural Science Foundation grant (Bi).

\subsection{List of terminologies  and notations} \label{notation}
\begin{enumerate}
  \item   
Throughout this paper, we adopt the Einstein convention, i.e., $a^ib_i$ implies summation over the index $i$.

\item The letter $A$ (or $E$) denotes a vector bundle over a smooth manifold $M$. 
    
    \item 
    We define $\Gamma(\DA)$ as the space of covariant differential operators on $A$. Specifically, an element $\derX \in \Gamma(\DA)$ is an $\R$-linear operator $\secA \rightarrow \secA$, accompanied by some $\derx \in \secM$, such that for all $y \in \secA$ and $f \in \CinfM$, the condition $$\derX(fy) = f\derX(y) + \derx(f)\cdot y$$ holds.

\item \textit{The standard pairing} on $\AAstar$ is defined for all $x+\xi, y+\eta \in \secAAstar$ as \begin{eqnarray}
	\label{Eqn:standard pairing}
	\lang{x+\xi}{y+\eta}:=\xi(y)+\eta(x).
\end{eqnarray}

\item \textit{A pure algebroid} $(A, \BrA{\tobefilledin}{\tobefilledin}, \rhoA)$ is composed by a vector bundle $A \rightarrow M$, a bundle map $\rhoA: A \rightarrow TM$ known as the anchor map, and a skew-symmetric bracket $\BrA{\tobefilledin}{\tobefilledin}$ on $\secA$ satisfying the Leibniz identity \begin{eqnarray*}
 	\BrA{x}{fy}
 	=f\BrA{x}{y}
 	+\rhoA(x)(f)\cdot y,
 	\qquad\forall x,y\in\secA, f\in\CinfM.
 \end{eqnarray*}

\item \textit{A dull algebroid}, which we defined slightly differently from \cite{MJ2018} (where the skew-symmetric property of $\BrA{\tobefilledin}{\tobefilledin}$ is not required), is a pure algebroid $(A, \BrA{\tobefilledin}{\tobefilledin}, \rhoA)$ that satisfies an additional condition: \begin{eqnarray}
	\label{Eqn:dull algebroid anchor condition}
	\rhoA(\BrA{x}{y})=\Br{\rhoA(x)}{\rhoA(y)},
	\qquad\forall x,y\in\secA.
\end{eqnarray} 
\item \textit{A Lie algebroid} is a pure algebroid $(A, \BrA{\tobefilledin}{\tobefilledin}, \rhoA)$ that adheres to the Jacobi identity \begin{eqnarray*}
	\BrA{\BrA{x}{y}}{z}
	+\BrA{\BrA{y}{z}}{x}
	+\BrA{\BrA{z}{x}}{y}
	=0,
	\qquad\forall x,y,z\in\secA.
\end{eqnarray*} 
Notably, a Lie algebroid must be a dull algebroid, as the Jacobi identity implies Condition \eqref{Eqn:dull algebroid anchor condition}.

\item \textit{A metric algebroid} $(E, \lang{\tobefilledin}{\tobefilledin}, \circ, a_E)$, as discussed in \cite{Vaisman}, is a vector bundle $E \rightarrow M$ equipped with three structures: a pseudo-metric $\lang{\tobefilledin}{\tobefilledin}$ on $\secE$, which is a nondegenerate symmetric bilinear form on $\secE$; a bilinear operation $\circ$ on $\secE$ known as the Dorfman bracket; and a bundle map $a_E: E \rightarrow TM$ called the anchor. These structures must satisfy the following axioms: for all $e, h_1, h_2 \in \secE$,
\begin{itemize}
	\item[$(i)$]
	$a_E(e)  \lang{h_1}{h_2}
	=\lang{ e \circ h_1}{h_2}
	+\lang{ h_1}{e \circ h_2}$,
	\item[$(ii)$]
	$e \circ e=\frac{1}{2}\D\lang{ e}{e}$,
\end{itemize} where $\D: \CinfM \rightarrow \secE$ is determined by $\lang{ \D(f)}{e} = a_E(e)(f)$ for all $f \in \CinfM$. An important equality for metric algebroids, useful in \cite{R}, is \begin{eqnarray}\label{Eqn:Metric algebroid D and anchor condition}
	\Lang{\D(f)\circ  e}{h}
	=\big(a_E\big(e\circ  h\big)-\BrM{a_E(e)}{a_E(h)}\big)(f),
\end{eqnarray}
for all $e,h\in\secE, f\in\CinfM$.

\item \textit{A pre-Courant algebroid}, as defined in \cite{Vaisman-2}, is a metric algebroid $(E, \lang{\tobefilledin}{\tobefilledin}, \circ, a_E)$ that satisfies an additional condition: \begin{eqnarray}\label{Compat-cond-pre-Courant}
	a_E(e_1\circ e_2)=\Br{a_E(e_1)}{a_E(e_2)},
	\qquad\forall e_1,e_2\in\secE.
\end{eqnarray}
 According to Equation \eqref{Eqn:Metric algebroid D and anchor condition}, this condition also implies $\D(f)\circ e = 0$ for all $e \in \secE$ and $f \in \CinfM.$ 
 
\item  \textit{A Courant algebroid} is a metric algebroid  $(E, \lang{\tobefilledin}{\tobefilledin}, \circ, a_E)$ that satisfies the Leibniz identity \begin{eqnarray*}\label{metric algebroid to Courant}
 	e_1\circ(e_2\circ e_3)
 	=(e_1\circ e_2)\circ e_3
 	+e_2\circ(e_1\circ e_3),
 	\qquad\forall e_1,e_2,e_3\in\secE.
 \end{eqnarray*}


\end{enumerate}

\section{From connections to Lie algebroids}\label{sec 2}

This section introduces compatible $n$-systems on vector bundles. We present our main findings, which characterize pure algebroids, dull algebroids, and Lie algebroids through connections and $n$-systems.

\subsection{Connections and pure algebroids}\label{Sec:2-1}
 We begin by outlining some key concepts and facts that are well established in the literature. 
Let    $A$ be a vector bundle   over a base manifold $M$. We also consider a bundle map $\rhoA:A\rightarrow TM$, which we designate as the \textbf{anchor}. The combination $(A,\rhoA)$ constitutes what we call  an \textbf{anchored (vector) bundle}.

\begin{Def}\label{def-connection}
	A \textbf{connection} on the anchored   bundle $(A,\rhoA)$ is   an operator $\nabla:\secA\times\secA\rightarrow\secA$ that satisfies the following two   properties:
	\begin{eqnarray*}
		\nabla_{(fx)}y=f\nabla_xy
		\qquad\mbox{and}\qquad
		\nabla_x(fy)=\rhoA(x)(f)\cdot y+f \nabla_xy,
	\end{eqnarray*}
	where $x,y\in\secA$ and $f\in\CinfM$.
\end{Def}

 From this Definition, we can observe that any connection $\nabla$ defined on the anchored bundle $(A,\rhoA)$ naturally induces an associated bracket operation on $\Gamma(A)$. This bracket, denoted as
 \begin{eqnarray*}
 	\Brn{\tobefilledin}{\tobefilledin}:\secA\times\secA\rightarrow\secA
 \end{eqnarray*}
 is defined by the covariant derivatives in alternating directions
 \begin{eqnarray}\label{pure bracket}
\Brn{x}{y}:=\nabla_x y-\nabla_y x
 	, \qquad\forall x,y\in\secA.
 \end{eqnarray}
\begin{prop}\label{pure-connection-correspondence}
	Let $(A,\rhoA)$ be an anchored vector bundle. Given a connection $\nabla$ on $(A,\rhoA)$, a pure algebroid structure $\big(A,\Brn{\tobefilledin}{\tobefilledin}, \rhoA\big)$ can be constructed, where the bracket $\Brn{\tobefilledin}{\tobefilledin}$ is defined by Equation \eqref{pure bracket}. Conversely, every pure algebroid structure on $(A,\rhoA)$ arises from an associated connection through this way.
\end{prop}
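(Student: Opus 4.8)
The plan is to treat the two directions separately: the forward implication is a direct computation from the definitions, while the converse carries the real content.

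For the forward direction, I would take a connection $\nabla$ on $(A,\rhoA)$ and check that $\Brn{\tobefilledin}{\tobefilledin}$, defined by \eqref{pure bracket}, satisfies the axioms of a pure algebroid. Skew-symmetry and $\R$-bilinearity are immediate from the antisymmetric form of the defining formula and the $\R$-bilinearity of $\nabla$. The Leibniz identity follows by expanding, using both properties in Definition \ref{def-connection},
\[
\Brn{x}{fy}=\nabla_x(fy)-\nabla_{(fy)}x=\bigl(\rhoA(x)(f)\cdot y+f\nabla_x y\bigr)-f\nabla_y x=f\Brn{x}{y}+\rhoA(x)(f)\cdot y .
\]
Hence $\big(A,\Brn{\tobefilledin}{\tobefilledin},\rhoA\big)$ is a pure algebroid.

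For the converse, suppose $\big(A,\BrA{\tobefilledin}{\tobefilledin},\rhoA\big)$ is a pure algebroid. First I would fix a reference connection on the anchored bundle: choosing any ordinary linear connection $D$ on the vector bundle $A$ (these exist by a partition-of-unity argument) and setting $\nabla^{0}_x y:=D_{\rhoA(x)}y$ gives a connection on $(A,\rhoA)$ in the sense of Definition \ref{def-connection}. Next, consider the discrepancy
\[
S(x,y):=\BrA{x}{y}-\bigl(\nabla^{0}_x y-\nabla^{0}_y x\bigr),\qquad x,y\in\secA .
\]
Using the Leibniz identity for $\BrA{\tobefilledin}{\tobefilledin}$ together with the two connection properties of $\nabla^{0}$, a short computation gives $S(x,fy)=fS(x,y)$; since $S$ is manifestly skew-symmetric, it is also $\CinfM$-linear in the first slot, so $S$ is a tensor, i.e.\ a bundle map $\wedge^2 A\to A$. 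Finally I would set $\nabla_x y:=\nabla^{0}_x y+\tfrac12 S(x,y)$. The correction $\tfrac12 S$ is $\CinfM$-linear in $x$ and in $y$, so $\nabla$ is again a connection on $(A,\rhoA)$; and by skew-symmetry of $S$ one gets $\nabla_x y-\nabla_y x=\nabla^{0}_x y-\nabla^{0}_y x+S(x,y)=\BrA{x}{y}$, so the original bracket is exactly $\Brn{\tobefilledin}{\tobefilledin}$.

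The only genuine obstacle is in the converse: one must recognize that the defect between an arbitrary Leibniz bracket and the bracket of a reference connection is tensorial — this is precisely where the Leibniz identity is used essentially — and then exploit the freedom in the symmetric part of a connection (the factor $\tfrac12$) to absorb that tensor. Existence of the reference connection is standard, and everything else reduces to routine $\CinfM$-linearity checks.
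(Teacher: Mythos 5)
Your argument is correct and follows essentially the same route as the paper: pull back an ordinary linear connection along the anchor, observe that the defect $S(x,y)=\BrA{x}{y}-(\nabla^0_xy-\nabla^0_yx)$ is a skew-symmetric tensor (the paper calls it $K$), and absorb it via $\nabla=\nabla^0+\tfrac12 S$. The only difference is that you also write out the easy forward direction, which the paper omits.
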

\begin{proof} 
	We only prove the second statement.
Beginning with an arbitrary affine connection $\nM:\secM\times\secA\rightarrow\secA$, we can define a corresponding operator $\nA:\secA\times\secA\rightarrow\secA$ through the following relation:
\begin{eqnarray*}
	\nA_{x}y
	:=\nM_{\rhoA(x)}y,
	\qquad\forall x,y\in\secA.
\end{eqnarray*}

To construct the desired connection, we introduce a skew-symmetric and $\CinfM$-bilinear operator $K:\secA\otimes\secA\rightarrow\secA$, defined as
\begin{eqnarray*}
	K(x,y):=\BrA{x}{y}-\nA_xy+\nA_yx,
	\qquad\forall x,y\in\secA.
\end{eqnarray*}

Subsequently, we construct an operator $\nabla:\secA\times\secA\rightarrow\secA$ by
\begin{eqnarray*}
	&&\nabla_xy
	:=\nA_xy
	+\frac{1}{2} K(x,y),
	\qquad\forall x,y\in\secA.
\end{eqnarray*}

One can verify that $\nabla$ determines a connection on $(A,\rhoA)$ and satisfies the relation:
$$\BrA{x}{y}=\nabla_xy-\nabla_yx=:[x,y]^\nabla.$$
\end{proof}
\begin{definition}\label{pure-connection equivalent}
	Two connections $\nabla$ and $\nabla'$ on $\big(A, \rhoA\big)$ are called equivalent  if there exists a symmetric bundle map $S:A \otimes A \rightarrow A$ satisfying
	$$\nabla'_x y=\nabla_x y + S(x,y),\quad \forall x,y\in \secA.$$
\end{definition}

As a direct consequence of Proposition \ref{pure-connection-correspondence} and Definition \ref{pure-connection equivalent}, we can state the following one-to-one correspondence:

\begin{prop}\label{equivalence classes of connections on anchored bundle}
	There exists a canonical one-to-one correspondence between   equivalence classes of connections on the anchored bundle $\big(A, \rhoA\big)$ and   pure algebroid structures underlying   $\big(A ,\rhoA\big)$. 
\end{prop}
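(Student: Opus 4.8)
The plan is to deduce Proposition \ref{equivalence classes of connections on anchored bundle} by combining the surjectivity half of Proposition \ref{pure-connection-correspondence} with a careful analysis of the fibres of the assignment $\nabla \mapsto [\cdot,\cdot]^\nabla$. First I would set up the map: let $\mathcal{C}$ be the set of connections on $(A,\rhoA)$ and $\mathcal{P}$ the set of pure algebroid structures on $(A,\rhoA)$; define $\Phi:\mathcal{C}\to\mathcal{P}$ by $\Phi(\nabla)=\big(A,[\cdot,\cdot]^\nabla,\rhoA\big)$. By the first statement of Proposition \ref{pure-connection-correspondence} this is well defined, and by the second (``conversely'') statement it is surjective. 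So the only thing to prove is that $\Phi$ descends to a \emph{bijection} on the quotient $\mathcal{C}/\!\sim$, where $\sim$ is the equivalence of Definition \ref{pure-connection equivalent}; equivalently, that $\Phi(\nabla)=\Phi(\nabla')$ holds \emph{if and only if} $\nabla\sim\nabla'$.

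The ``only if'' direction is the substantive step. Suppose $\nabla,\nabla'$ both induce the same bracket, and set $S(x,y):=\nabla'_xy-\nabla_xy$. I would first check that $S$ is tensorial: from the two connection identities in Definition \ref{def-connection}, the $\rhoA(x)(f)\cdot y$ terms in $\nabla'_x(fy)$ and $\nabla_x(fy)$ cancel, giving $S(x,fy)=fS(x,y)$, and $\CinfM$-linearity in the first slot is immediate since both connections are $\CinfM$-linear there; hence $S$ is a bundle map $A\otimes A\to A$. Next, symmetry: $S(x,y)-S(y,x)=(\nabla'_xy-\nabla'_yx)-(\nabla_xy-\nabla_yx)=[x,y]^{\nabla'}-[x,y]^{\nabla}=0$ by hypothesis, so $S$ is symmetric, which is exactly the condition that $\nabla\sim\nabla'$. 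The ``if'' direction is the easy converse already essentially recorded before Definition \ref{pure-connection equivalent}: if $\nabla'_xy=\nabla_xy+S(x,y)$ with $S$ symmetric, then $[x,y]^{\nabla'}=\nabla'_xy-\nabla'_yx=\nabla_xy-\nabla_yx+S(x,y)-S(y,x)=[x,y]^\nabla$, so $\Phi(\nabla)=\Phi(\nabla')$. Combining both directions, $\Phi$ induces a well-defined injective map on $\mathcal{C}/\!\sim$, and surjectivity is inherited from Proposition \ref{pure-connection-correspondence}; hence the induced map $\mathcal{C}/\!\sim\;\to\mathcal{P}$ is a bijection. I would also remark that this bijection is canonical in the sense that it is constructed without any auxiliary choices — the forward map is literally $\nabla\mapsto[\cdot,\cdot]^\nabla$.

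I do not anticipate a serious obstacle here; the proposition is a formal ``quotient by the kernel of a surjection'' argument. The one place that needs genuine (if routine) care is verifying that $S$ is $\CinfM$-bilinear rather than merely $\R$-bilinear — that is the only point where the defining properties of a connection, as opposed to pure bookkeeping, are actually used — so I would make sure to display that cancellation explicitly. Everything else (symmetry of $S$, the converse implication, inheriting surjectivity) is a one-line consequence of the definitions and of Proposition \ref{pure-connection-correspondence}, so the proof can be kept very short, perhaps two short paragraphs matching the ``only prove the nontrivial direction'' style already used in the proof of Proposition \ref{pure-connection-correspondence}.
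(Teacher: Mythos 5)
Your proof is correct and follows exactly the route the paper intends: the paper states this proposition as a direct consequence of Proposition \ref{pure-connection-correspondence} and Definition \ref{pure-connection equivalent} without writing out the details, and your argument simply makes explicit the fibre computation (tensoriality and symmetry of $S=\nabla'-\nabla$) that justifies that claim. No gaps.
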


Let us introduce the curvature form of $\nabla$, which is an operator
$$\rnabla :\secA\wedge\secA\rightarrow\Gamma(\DA)$$
 defined as
\begin{eqnarray}\label{Def: curvature}
	\rnabla (x\wedge y)(z)
	:=\nabla_x \nabla_yz
	-\nabla_y \nabla_xz
	-\nabla_{(\Brn{x}{y})}z,
\end{eqnarray}
 for all $x,y,z\in\secA$.

According to Equation \eqref{Def: curvature}, the operator $\rnabla$ is $C^\infty(M)$-linear in the arguments $x$ and $y$. This property results in the following proposition:

\begin{prop}\label{proposition 2.5}
	For a pure algebroid $\big(A,\Brn{\tobefilledin}{\tobefilledin}, \rhoA\big)$ arising from the connection $\nabla$, the following statements are equivalent:
	\begin{itemize}
		\item The curvature $\rnabla$ is $C^\infty(M)$-linear with respect to the argument $z$,
		\item $\big(A,\Brn{\tobefilledin}{\tobefilledin}, \rhoA\big)$ is a dull algebroid.
	\end{itemize}
\end{prop}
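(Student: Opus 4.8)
The plan is to compute directly how the curvature operator $\rnabla(x\wedge y)$ behaves when its last argument is rescaled by a function, and to recognize the dull-algebroid condition as precisely the obstruction to $\CinfM$-linearity in that slot. Since the paragraph preceding the statement already records that $\rnabla$ is $\CinfM$-linear in the arguments $x$ and $y$ (and it is manifestly additive and $\R$-linear in $z$), the entire content of the proposition concerns the behaviour of $\rnabla(x\wedge y)$ under $z\mapsto fz$.

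First I would fix $x,y,z\in\secA$ and $f\in\CinfM$ and expand
\[
\rnabla(x\wedge y)(fz)=\nabla_x\nabla_y(fz)-\nabla_y\nabla_x(fz)-\nabla_{\Brn{x}{y}}(fz)
\]
using only the Leibniz rule $\nabla_u(fv)=\rhoA(u)(f)\cdot v+f\nabla_uv$ from Definition \ref{def-connection}. Applying this rule twice to each of the first two terms, the two first-order ``cross'' contributions $\rhoA(y)(f)\cdot\nabla_xz$ and $\rhoA(x)(f)\cdot\nabla_yz$ appear symmetrically and cancel in the antisymmetrized combination, while the second-order pieces $\rhoA(x)\big(\rhoA(y)(f)\big)-\rhoA(y)\big(\rhoA(x)(f)\big)$ assemble into $[\rhoA(x),\rhoA(y)](f)$; applying the rule once to the third term yields $\rhoA(\Brn{x}{y})(f)\cdot z+f\,\nabla_{\Brn{x}{y}}z$. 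Collecting everything produces the identity
\[
\rnabla(x\wedge y)(fz)=f\cdot\rnabla(x\wedge y)(z)+\big([\rhoA(x),\rhoA(y)]-\rhoA(\Brn{x}{y})\big)(f)\cdot z ,
\]
valid for all $x,y,z\in\secA$ and $f\in\CinfM$. Equivalently, $\rnabla(x\wedge y)$ is a covariant differential operator whose symbol is the ``anchor defect'' $V_{x,y}:=[\rhoA(x),\rhoA(y)]-\rhoA(\Brn{x}{y})\in\secM$.

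The equivalence then follows at once. If $\big(A,\Brn{\tobefilledin}{\tobefilledin},\rhoA\big)$ is a dull algebroid, then $V_{x,y}=0$ for all $x,y$ by Condition \eqref{Eqn:dull algebroid anchor condition}, so the correction term vanishes and $\rnabla$ is $\CinfM$-linear in $z$. Conversely, if $\rnabla$ is $\CinfM$-linear in $z$, the boxed identity forces $V_{x,y}(f)\cdot z=0$ for all $x,y,z,f$; evaluating at a point and choosing a section $z$ nonvanishing there (which exists unless $\mathrm{rk}\,A=0$, in which case the statement is vacuous) gives $V_{x,y}(f)=0$ for every $f$, hence $V_{x,y}=0$. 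This is exactly the anchor compatibility \eqref{Eqn:dull algebroid anchor condition}, and since a pure algebroid satisfying it is by definition a dull algebroid, the argument is complete.

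I do not expect a genuine obstacle here; the only thing that requires care is the bookkeeping in the double application of the Leibniz rule — verifying that the first-order cross terms really cancel under antisymmetrization and that the remaining second-order terms combine into $[\rhoA(x),\rhoA(y)](f)$ — together with the elementary remark that a vector field annihilating every function (witnessed against a nonzero section) must vanish.
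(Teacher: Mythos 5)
Your computation is correct and is exactly the standard argument the authors have in mind — the paper itself omits the proof as ``easy,'' and the identity $\rnabla(x\wedge y)(fz)=f\,\rnabla(x\wedge y)(z)+\big(\Br{\rhoA(x)}{\rhoA(y)}-\rhoA(\Brn{x}{y})\big)(f)\cdot z$ you derive is precisely the intended content, with the dull-algebroid condition \eqref{Eqn:dull algebroid anchor condition} appearing as the obstruction term. Nothing is missing; the handling of the converse (testing against a nonvanishing section) and the rank-zero edge case are both fine.
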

The proof is easy and is therefore omitted. Through direct computation, we obtain another significant property:

\begin{prop}\label{pure-Bianchi identity}
	With the same condition as in Proposition \ref{proposition 2.5},
	for a pure algebroid $\big(A,\Brn{\tobefilledin}{\tobefilledin}, \rhoA\big)$, the following identity holds:
	\begin{eqnarray}\label{Eqn:pBI of pure algebroid}
		\rnabla (x,y)z
		+\rnabla (y,z)x
		+\rnabla (z,x)y
		=\Brn{x}{\Brn{y}{z}}
		+\Brn{y}{\Brn{z}{x}}
		+\Brn{z}{\Brn{x}{y}},
	\end{eqnarray}
	where $x,y,z\in\secA$.
\end{prop}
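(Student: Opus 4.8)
The plan is to expand both sides of Equation \eqref{Eqn:pBI of pure algebroid} directly in terms of $\nabla$ and the bracket $\Brn{\tobefilledin}{\tobefilledin}$, using the defining relations \eqref{pure bracket} and \eqref{Def: curvature}, and then verify that all terms cancel. First I would rewrite the definition of the curvature as
\[
\rnabla(x,y)z = \nabla_x\nabla_y z - \nabla_y\nabla_x z - \nabla_{\Brn{x}{y}}z,
\]
so that the left-hand side of \eqref{Eqn:pBI of pure algebroid} becomes the cyclic sum over $(x,y,z)$ of $\nabla_x\nabla_y z - \nabla_y\nabla_x z - \nabla_{\Brn{x}{y}}z$. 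A quick rearrangement shows the double-covariant-derivative part of this cyclic sum equals the cyclic sum of $\nabla_x(\nabla_y z - \nabla_z y) = \nabla_x \Brn{y}{z}$, using \eqref{pure bracket} and reindexing the cyclic sum. Thus the left-hand side simplifies to
\[
\sum_{\mathrm{cyclic}} \nabla_x \Brn{y}{z} \;-\; \sum_{\mathrm{cyclic}} \nabla_{\Brn{x}{y}} z.
\]

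Next I would expand the right-hand side of \eqref{Eqn:pBI of pure algebroid} the same way: each term $\Brn{x}{\Brn{y}{z}} = \nabla_x \Brn{y}{z} - \nabla_{\Brn{y}{z}} x$ by \eqref{pure bracket}. Summing cyclically gives
\[
\sum_{\mathrm{cyclic}} \Brn{x}{\Brn{y}{z}} = \sum_{\mathrm{cyclic}} \nabla_x \Brn{y}{z} \;-\; \sum_{\mathrm{cyclic}} \nabla_{\Brn{y}{z}} x.
\]
Comparing with the simplified left-hand side, the $\sum \nabla_x \Brn{y}{z}$ terms match identically, so the identity reduces to checking
\[
\sum_{\mathrm{cyclic}} \nabla_{\Brn{x}{y}} z = \sum_{\mathrm{cyclic}} \nabla_{\Brn{y}{z}} x,
\]
which is immediate since both are the same cyclic sum after relabeling $(x,y,z)\mapsto(y,z,x)$. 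This closes the argument.

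The computation is essentially bookkeeping with cyclic sums, so there is no genuine obstacle; the only point requiring care is the reindexing step that turns $\sum_{\mathrm{cyclic}}(\nabla_x\nabla_y z - \nabla_y\nabla_x z)$ into $\sum_{\mathrm{cyclic}}\nabla_x\Brn{y}{z}$, where one must track signs and the direction of the cyclic permutation correctly. Since $\rnabla$ has already been observed to be $C^\infty(M)$-linear in $x$ and $y$ (and nothing about $z$-linearity is needed here), no additional hypotheses beyond those of Proposition \ref{proposition 2.5} are used; in fact the identity holds for an arbitrary connection $\nabla$ on $(A,\rhoA)$, the pure algebroid hypothesis being merely the ambient setting.
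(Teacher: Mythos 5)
Your proof is correct, and it is precisely the direct computation the paper alludes to (the paper states the identity "through direct computation" without writing out the proof): the cyclic regrouping of the second-order terms into $\sum_{\mathrm{cyclic}}\nabla_x\Brn{y}{z}$ and the relabeling of the two cyclic sums of $\nabla_{\Brn{\cdot}{\cdot}}$ terms both check out. Your closing remark is also accurate: the identity is a formal consequence of \eqref{pure bracket} and \eqref{Def: curvature} for any connection, with no linearity hypothesis on $\rnabla$ in its third argument needed.
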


This identity \eqref{Eqn:pBI of pure algebroid} is called the \textbf{pure-Bianchi identity} (of a pure algebroid).

\begin{prop}\label{Lie-Bianchi identity}
	The following statements are equivalent:
	\begin{itemize}
		\item[$(1)$]
		$\big(A,\Brn{\tobefilledin}{\tobefilledin}, \rhoA\big)$ is a Lie algebroid;
		\item[$(2)$]
		For all $x,y,z\in\secA$, the following identity holds:
		\begin{eqnarray}\label{Eqn:LBI of Lie algebroid}
			\rnabla (x,y)z
			+\rnabla (y,z)x
			+\rnabla (z,x)y
			=0.
		\end{eqnarray}
	\end{itemize}
\end{prop}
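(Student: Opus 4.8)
The plan is to read the present statement off the pure-Bianchi identity of Proposition \ref{pure-Bianchi identity}, since that identity already rewrites the cyclic sum of curvature terms as the Jacobiator of the bracket $\Brn{\tobefilledin}{\tobefilledin}$. So almost no new computation is needed; the work has been front-loaded into the pure-Bianchi identity.

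First I would recall that, by Proposition \ref{pure-connection-correspondence}, the triple $\big(A,\Brn{\tobefilledin}{\tobefilledin},\rhoA\big)$ built from $\nabla$ via Equation \eqref{pure bracket} is automatically a pure algebroid. By the definition of a Lie algebroid in Section \ref{notation}, statement $(1)$ is therefore equivalent to the vanishing, for all $x,y,z\in\secA$, of the Jacobiator
\[
J(x,y,z):=\Brn{\Brn{x}{y}}{z}+\Brn{\Brn{y}{z}}{x}+\Brn{\Brn{z}{x}}{y}.
\]
Here I also use the standard fact, already flagged in Section \ref{notation}, that the Jacobi identity forces the anchor compatibility \eqref{Eqn:dull algebroid anchor condition} (obtained by inserting $fz$ into the Jacobi identity and comparing the $\CinfM$-linear terms), so that no separate verification of the dull-algebroid condition is required to conclude that $(1)$ holds.

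Next I would invoke the pure-Bianchi identity \eqref{Eqn:pBI of pure algebroid}, which, after using skew-symmetry of $\Brn{\tobefilledin}{\tobefilledin}$ to rewrite each double bracket, states precisely
\[
\rnabla(x,y)z+\rnabla(y,z)x+\rnabla(z,x)y=J(x,y,z),\qquad\forall\,x,y,z\in\secA.
\]
Consequently $J\equiv 0$ if and only if the left-hand side vanishes identically, and the latter is exactly identity \eqref{Eqn:LBI of Lie algebroid} of statement $(2)$. This yields the equivalence $(1)\Leftrightarrow(2)$.

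I do not expect a genuine obstacle: once Proposition \ref{pure-Bianchi identity} is available, the present proposition is a formal corollary, and the only point needing a sentence of care is the implicit claim that a pure algebroid obeying the Jacobi identity is a Lie algebroid in the sense used throughout — i.e. that its anchor is automatically a bracket homomorphism — which is the standard argument recalled above.
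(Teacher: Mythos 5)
Your argument is correct and is exactly the route the paper intends: the paper states this proposition without proof, but it is the immediate consequence of the pure-Bianchi identity \eqref{Eqn:pBI of pure algebroid} (and is used that way in the proof of Proposition \ref{Lie algebroid}). The only nit is a sign: by skew-symmetry the right-hand side of \eqref{Eqn:pBI of pure algebroid} equals $-J(x,y,z)$ rather than $J(x,y,z)$, which of course does not affect the equivalence with vanishing.
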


We call \eqref{Eqn:LBI of Lie algebroid} the \textbf{Lie-Bianchi identity} (of a Lie algebroid).
\begin{Rem}
	There is another type of curvature in \cite{ABD}. Let $M$ be a smooth manifold and $E\rightarrow M$ be a vector bundle; a connection $\nabla$ is a degree $1$ derivation of the differential graded module $\Omega(M;E)=\Gamma(\wedge T^*M \otimes E)$ over the differential graded algebra $\big(\Omega(M),\wedge, \dd\big)$, that is, an operator $
		\nabla:\Gamma(\wedge^\bullet T^*M\otimes E)\rightarrow\Gamma(\wedge^{\bullet+1}T^*M\otimes E)$ satisfying
		\begin{eqnarray*}
			\nabla(\omega \sigma)
			=(\dd\omega)\cdot\sigma
			+(-1)^{|\omega|}\omega\cdot\nabla(\sigma),
			\qquad\forall \omega\in\Omega(M), \sigma\in\Omega(M;E).
		\end{eqnarray*}
		The curvature of $\nabla$ is $R_\nabla=\nabla^2$, is defined to be
		an $\Omega(M)$-module morphism of degree $2$.
		It is different from the curvature form that we use in this paper.
\end{Rem}
\subsection{Lie algebroids from compatible $n$-systems}\label{sec 2.4} Consider a vector bundle $A$ over a base manifold $M$, and suppose that $\derad^1$, $\derad^2$, $\ldots$, $\derad^n$ are elements of $\secAstar$ that are linearly independent over the ring of smooth functions $\CinfM$. Furthermore, let $\derX_1$, $\derX_2$, $\ldots$, $\derX_n$ be covariant differential operators on $A$, belonging to $\Gamma(\CDO(A))$, where these operators might be linearly dependent over $\CinfM$. In the sequel, the rank of the vector bundle $A$ is at least $n$.
\begin{definition}\label{n-system on A}
	 The datum $$(\derX_1,\ldots,\derX_n;\derad^1,\ldots,\derad^n)$$ is called  an
\textbf{$n$-system} on  $A$.
\end{definition}

In the subsequent discussion, we consistently represent the symbols of $\derX_1, \ldots, \derX_{n-1}$, and $\derX_n$ by the letters $\derx_1, \ldots, \derx_{n-1}$, and $\derx_n$ (where $\derx_i \in \secM$ for each $i$). Given an $n$-system $(\derX_1, \ldots, \derX_n; \derad^1, \ldots, \derad^n)$, one can  define the following structure maps on $\Gamma(A)$, which  establish a pure algebroid $(A,\BrAn{\tobefilledin}{\tobefilledin},\anchorn)$:
\begin{itemize}
	\item The anchor map $\anchorn: A \rightarrow TM$ is defined as
	\begin{eqnarray}
		\label{Def:anchor of Lie algebroid in n system}
		\anchorn(y) := \Lang{y}{\derad^i} \cdot \derx_i,\hspace*{2em} \forall y \in \secA.
	\end{eqnarray}
	\item The connection $\nA: \secA \times \secA \rightarrow \secA$ is defined by
	\begin{eqnarray}
		\label{Def:connection of Lie algebroid in n system}
		\nA_yz := \Lang{y}{\derad^i} \cdot \derX_i(z),\hspace*{2em}\forall y, z \in \secA.
	\end{eqnarray}
	\item The bracket $\BrAn{\tobefilledin}{\tobefilledin}: \secA \times \secA \rightarrow \secA$, induced by $\nA$, is formulated as
	\begin{eqnarray}
		\label{Def:bracket of Lie algebroid in n system}
		\BrAn{y}{z} := \nA_yz - \nA_zy = \Lang{y}{\derad^i} \cdot \derX_i(z) - \Lang{z}{\derad^i} \cdot \derX_i(y),\hspace*{2em}\forall y, z \in \secA. 
	\end{eqnarray}
\end{itemize}
 For specific examples of $n$-systems, see Section \ref{special type example}.
 The next step is to investigate the conditions under which this pure algebroid can be characterized as either a dull algebroid or a Lie algebroid. The first fact we need is the following proposition.
\begin{prop}\label{dull algebroid}
	Consider the $n$-system $(\derX_1,\ldots,\derX_n;\derad^1,\ldots,\derad^n)$ on $A$, and suppose that it satisfies the following conditions: 
	\begin{eqnarray*}
		\derX_i(\derad^j) 
		&\hspace*{-0.5em}=\hspace*{-0.5em}& a_{ik}^j \derad^k, 
		\\ 
		\label{609-2} 
		\BrM{\derx_i}{\derx_j} 
		&\hspace*{-0.5em}=\hspace*{-0.5em}& (a_{ji}^k-a_{ij}^k)\derx_k, 
	\end{eqnarray*}
	where $a_{ik}^j\in\CinfM$. Under these conditions, the triple $(A,\BrAn{\tobefilledin}{\tobefilledin},\anchorn)$ constitutes a dull algebroid.
\end{prop}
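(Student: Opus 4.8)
\emph{Proof plan.} As observed in the paragraph preceding the statement, the maps \eqref{Def:anchor of Lie algebroid in n system}--\eqref{Def:bracket of Lie algebroid in n system} already assemble into a pure algebroid: the bracket $\BrAn{\tobefilledin}{\tobefilledin}$ is manifestly skew-symmetric, and a one-line check shows that $\nA$ is a connection on $(A,\anchorn)$ in the sense of Definition \ref{def-connection}, so Proposition \ref{pure-connection-correspondence} applies. Hence the only thing to establish is the anchor compatibility \eqref{Eqn:dull algebroid anchor condition}, that is,
\[
\anchorn\big(\BrAn{y}{z}\big) = \Br{\anchorn(y)}{\anchorn(z)}, \qquad \forall\, y,z \in \secA .
\]
To lighten the bookkeeping I would write $y^i := \Lang{y}{\derad^i}$ and $z^i := \Lang{z}{\derad^i}$ for the component functions in $\CinfM$, so that $\anchorn(y) = y^i\derx_i$ and $\anchorn(z) = z^i\derx_i$.

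The first step is to rewrite $\Lang{\derX_i(z)}{\derad^j}$. Each covariant differential operator $\derX_i$ acts on $\secAstar$ as the dual operator with the same symbol $\derx_i$, so the pairing is $\derx_i$-invariant: $\derx_i\Lang{z}{\derad^j} = \Lang{\derX_i(z)}{\derad^j} + \Lang{z}{\derX_i(\derad^j)}$. Combining this with the first hypothesis $\derX_i(\derad^j) = a_{ik}^j\derad^k$ gives
\[
\Lang{\derX_i(z)}{\derad^j} = \derx_i(z^j) - a_{ik}^j\, z^k .
\]
Substituting into the definition \eqref{Def:bracket of Lie algebroid in n system} of $\BrAn{y}{z}$ and pairing with $\derad^j$ yields $\Lang{\BrAn{y}{z}}{\derad^j} = y^i\derx_i(z^j) - z^i\derx_i(y^j) - a_{ik}^j\big(y^i z^k - z^i y^k\big)$, and therefore
\[
\anchorn\big(\BrAn{y}{z}\big) = \big(y^i\derx_i(z^j) - z^i\derx_i(y^j)\big)\derx_j - a_{ik}^j\big(y^i z^k - z^i y^k\big)\derx_j .
\]
On the other hand, the Leibniz rule for the Lie bracket of vector fields gives $\BrM{y^i\derx_i}{z^j\derx_j} = y^i\derx_i(z^j)\derx_j - z^i\derx_i(y^j)\derx_j + y^i z^j\BrM{\derx_i}{\derx_j}$, so it remains only to match the two leftover terms.

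After renaming the summed indices one sees that $-a_{ik}^j(y^i z^k - z^i y^k)\derx_j = (a_{ji}^k - a_{ij}^k)\, y^i z^j\, \derx_k$ (split the expression into its two pieces and swap the names of $i$ and $k$ in the second piece), which by the second hypothesis $\BrM{\derx_i}{\derx_j} = (a_{ji}^k - a_{ij}^k)\derx_k$ is precisely $y^i z^j\BrM{\derx_i}{\derx_j}$; hence $\anchorn(\BrAn{y}{z}) = \Br{\anchorn(y)}{\anchorn(z)}$ and the proof is complete. The one place that genuinely needs care is this reindexing — everything else is a mechanical substitution. (Alternatively one could invoke Proposition \ref{proposition 2.5} and verify that the curvature $\rnabla(y\wedge z)$ is $\CinfM$-linear in its third argument, but expanding $\nA_y\nA_z w$ makes that route noticeably longer than checking the anchor identity directly.)
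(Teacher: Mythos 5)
Your proof is correct and follows essentially the same route as the paper's: both reduce the claim to the anchor identity $\anchorn(\BrAn{y}{z})=\Br{\anchorn(y)}{\anchorn(z)}$, rewrite $\Lang{\derX_i(z)}{\derad^j}$ via the duality relation $\derx_i\Lang{z}{\derad^j}=\Lang{\derX_i(z)}{\derad^j}+\Lang{z}{\derX_i(\derad^j)}$, and then cancel the remaining terms using the two hypotheses after a reindexing. Your index manipulation $-a_{ik}^j(y^iz^k-z^iy^k)\derx_j=(a_{ji}^k-a_{ij}^k)y^iz^j\derx_k$ checks out, so the argument is complete.
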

\begin{proof}
Based on the definitions and conditions presented above, one verifies the following relation:
\begin{eqnarray*}
\anchorn(\BrAn{x}{y})-\BrM{\anchorn(x)}{\anchorn(y)}=0,
\qquad\forall x,y\in\secA.
\end{eqnarray*}
In fact, the left hand side of the above equation can be expressed as:
{
\begin{eqnarray*}
&&
\Lang{x}{\derad^i}\cdot\Lang{\derX_i(y)}{\derad^j}\cdot\derx_j
-\Lang{y}{\derad^i}\cdot\Lang{\derX_i(x)}{\derad^j}\cdot\derx_j
\\&&\quad
-\Lang{x}{\derad^i}\cdot\Lang{y}{\derad^j}\cdot\BrM{\derx_i}{\derx_j}
-\Lang{x}{\derad^i}\cdot\derx_i\big(\Lang{y}{\derad^j}\big)\cdot\derx_j
+\Lang{y}{\derad^j}\cdot\derx_j\big(\Lang{x}{\derad^i}\big)\cdot\derx_i
\\&\hspace*{-0.5em}=\hspace*{-0.5em}&
-\Lang{x}{\derad^i}\cdot\Lang{y}{\derad^j}\cdot\BrM{\derx_i}{\derx_j}
-\Lang{x}{\derad^i}\cdot\Lang{y}{\derX_i(\derad^j)}\cdot\derx_j
+\Lang{y}{\derad^i}\cdot\Lang{x}{\derX_i(\derad^j)}\cdot\derx_j
\\&\hspace*{-0.5em}=\hspace*{-0.5em}&
-\Lang{x}{\derad^i}\cdot\Lang{y}{\derad^j}\cdot(a_{ji}^k-a_{ij}^k)\derx_k
-\Lang{x}{\derad^i}\cdot\Lang{y}{a_{ik}^j \derad^k}\cdot\derx_j
+\Lang{y}{\derad^i}\cdot\Lang{x}{a_{ik}^j \derad^k}\cdot\derx_j
\\&\hspace*{-0.5em}=\hspace*{-0.5em}&0.
\end{eqnarray*}
}
\end{proof}
\begin{prop}\label{Lie algebroid}
	Consider an $n$-system $(\derX_1,\ldots,\derX_n;\derad^1,\ldots,\derad^n)$ on $A$, and suppose that it satisfies  the following conditions: 
	\begin{eqnarray}
		\label{Eqn:compatible condition 1 of Lie algebroid in n system}
		\derX_i(\derad^j)
		&\hspace*{-0.5em}=\hspace*{-0.5em}&a_{ik}^j \derad^k,
		\\
		\label{Eqn:compatible condition 2 of Lie algebroid in n system}
		\BrCDO{\derX_i}{\derX_j}
		&\hspace*{-0.5em}=\hspace*{-0.5em}&(a_{ji}^k-a_{ij}^k)\derX_k,
	\end{eqnarray}
	
	where $a_{ik}^j\in\CinfM$. Under these conditions, the following equation holds: $$\rnabla(x,y)z=0,$$ for all $x, y, z \in \secA$. Consequently, the triple  $(A,\BrAn{\tobefilledin}{\tobefilledin},\anchorn)$ constitutes a Lie algebroid.
\end{prop}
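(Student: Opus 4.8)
The plan is to compute $\rnabla(x,y)z$ directly from its definition \eqref{Def: curvature}, using the explicit formulas \eqref{Def:connection of Lie algebroid in n system} for $\nA$ and \eqref{Def:bracket of Lie algebroid in n system} for $\BrAn{\tobefilledin}{\tobefilledin}$, and then check that every term cancels once conditions \eqref{Eqn:compatible condition 1 of Lie algebroid in n system} and \eqref{Eqn:compatible condition 2 of Lie algebroid in n system} are invoked. Once $\rnabla \equiv 0$ is established, the conclusion that $(A,\BrAn{\tobefilledin}{\tobefilledin},\anchorn)$ is a Lie algebroid is immediate: by Proposition \ref{dull algebroid} (whose hypotheses are implied by ours, since taking symbols in \eqref{Eqn:compatible condition 2 of Lie algebroid in n system} reproduces the bracket relation $\BrM{\derx_i}{\derx_j} = (a_{ji}^k - a_{ij}^k)\derx_k$) we already have a pure algebroid, in fact a dull algebroid, and then the vanishing of the curvature is precisely the Lie-Bianchi identity \eqref{Eqn:LBI of Lie algebroid}, so Proposition \ref{Lie-Bianchi identity} finishes the argument.

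For the main computation I would first expand $\nA_y z = \Lang{y}{\derad^i}\cdot \derX_i(z)$ and then compute $\nA_x(\nA_y z)$. Applying $\nA_x$ means contracting with $\Lang{x}{\derad^j}$ and hitting with $\derX_j$; since $\derX_j$ is a covariant differential operator with symbol $\derx_j$, the Leibniz rule produces $\derX_j\big(\Lang{y}{\derad^i}\cdot \derX_i(z)\big) = \derx_j\big(\Lang{y}{\derad^i}\big)\cdot \derX_i(z) + \Lang{y}{\derad^i}\cdot \derX_j(\derX_i(z))$. One should also rewrite $\derx_j\big(\Lang{y}{\derad^i}\big) = \Lang{\derX_j(y)}{\derad^i} + \Lang{y}{\derX_j(\derad^i)}$, again using that $\derX_j$ is a CDO. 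Doing this for both $\nA_x\nA_y z$ and $\nA_y\nA_x z$ and subtracting, the ``$\derX_j(y)$-type'' and ``$\derX_i(x)$-type'' terms reorganize exactly into $\nA_{(\nA_x y - \nA_y x)} z = \nA_{\BrAn{x}{y}} z$, which is the third term in \eqref{Def: curvature}; what remains is the genuine ``commutator'' part $\Lang{x}{\derad^j}\Lang{y}{\derad^i}\cdot\BrCDO{\derX_j}{\derX_i}(z)$ together with the terms carrying $\derX_j(\derad^i)$ and $\derX_i(\derad^j)$. Substituting \eqref{Eqn:compatible condition 1 of Lie algebroid in n system} into the latter and \eqref{Eqn:compatible condition 2 of Lie algebroid in n system} into the former, one sees that $\BrCDO{\derX_j}{\derX_i}(z) = (a_{ij}^k - a_{ji}^k)\derX_k(z)$ cancels against the combination $\Lang{y}{a_{ji}^k\derad^k}\cdot\derX_k(z) - \Lang{x \leftrightarrow y \text{ analog}}$ coming from the $\derX(\derad)$ terms, after relabeling indices. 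This is essentially the same cancellation pattern already exhibited in the proof of Proposition \ref{dull algebroid}, only one categorical level up (operators $\derX_k$ in place of their symbols $\derx_k$).

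The step I expect to be the main obstacle is the careful bookkeeping in the expansion of the double covariant derivatives: there are several species of terms (those with $\derx_j$ acting on a function coefficient, those with $\derX_j$ acting on $\derad^i$, those with $\derX_j$ acting on $y$, and the pure second-order term $\derX_j\derX_i(z)$), and one must be scrupulous about index names and about which terms get absorbed into $\nA_{\BrAn{x}{y}}z$ versus which survive to be killed by the hypotheses. A secondary subtlety is that $\derX_i$ need not be $\CinfM$-linear, so one cannot naively pull the coefficient $\Lang{y}{\derad^i}$ through $\derX_j$; the symbol terms $\derx_j\big(\Lang{y}{\derad^i}\big)$ generated by this are precisely what makes the $\nA_{\BrAn{x}{y}}z$ term come out right, so they must be tracked rather than discarded. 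Apart from this organizational care, the proof is a direct verification requiring no new ideas.
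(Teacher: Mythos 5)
Your proposal is correct and follows essentially the same route as the paper: the paper's proof likewise expands $\rnabla(x,y)z$ directly, absorbs the $\Lang{\derX_j(y)}{\derad^i}$-type terms into $\nabla_{\BrAn{x}{y}}z$, and cancels the surviving commutator and $\derX_i(\derad^j)$ terms via Conditions \eqref{Eqn:compatible condition 1 of Lie algebroid in n system} and \eqref{Eqn:compatible condition 2 of Lie algebroid in n system}, before invoking the pure-Bianchi identity \eqref{Eqn:pBI of pure algebroid} to get the Jacobi identity. Your additional observation that taking symbols in \eqref{Eqn:compatible condition 2 of Lie algebroid in n system} recovers the hypotheses of Proposition \ref{dull algebroid} is accurate and harmless, though the paper does not route through that proposition.
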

\begin{proof}
For all $x,y,z\in\secA$, by Formulas \eqref{Def:anchor of Lie algebroid in n system}---\eqref{Def:bracket of Lie algebroid in n system} and Conditions \eqref{Eqn:compatible condition 1 of Lie algebroid in n system}---\eqref{Eqn:compatible condition 2 of Lie algebroid in n system}, we obtain
\begin{eqnarray*}
\rnabla (x,y)z
&=&\nabla_x \nabla_yz
-\nabla_y \nabla_xz
-\nabla_{(\Brn{x}{y})}z
\\&=&\lang{x}{\derad^i}\cdot\lang{y}{\derad^j}\cdot\BrCDO{\derX_i}{\derX_j}(z)
+\big(\lang{x}{\derad^i}\cdot\Lang{y}{\derX_i(\derad^k)}
-\lang{y}{\derad^j}\cdot\lang{x}{\derX_j(\derad^k)}\big)\cdot\derX_k(z)
\\&=&\lang{x}{\derad^i}\cdot\lang{y}{\derad^j}\cdot(a_{ji}^k-a_{ij}^k)\derX_k(z)
+\big(\lang{x}{\derad^i}\cdot\Lang{y}{a_{ij}^k\derad^j}
-\lang{y}{\derad^j}\cdot\lang{x}{a_{ji}^k\derad^i}\big)\cdot\derX_k(z)
\\&=&
0
.
\end{eqnarray*}
Moreover, by Equation \eqref{Eqn:pBI of pure algebroid}, we have
$
\BrA{x}{\BrA{y}{z}}
+\BrA{y}{\BrA{z}{x}}
+\BrA{z}{\BrA{x}{y}}
=
\rnabla (x,y)z
+\rnabla (y,z)x
+\rnabla (z,x)y
=0
$.
\end{proof}
\begin{Def}
	An $n$-system $(\derX_1, \ldots, \derX_n; \derad^1, \ldots, \derad^n)$ on $A$ is called \textbf{compatible} if it satisfies Conditions \eqref{Eqn:compatible condition 1 of Lie algebroid in n system} and \eqref{Eqn:compatible condition 2 of Lie algebroid in n system}.
\end{Def}
Consider the simplest case where $n=1$, which corresponds to a $1$-system on the vector bundle $A\to M$, say $\big(\derX;\derad~\big)$ where $\derX\in \Gamma(\CDO(A))$, $\derad\in \secAstar$. The compatibility becomes
\begin{eqnarray*}
	\derX(\derad)=\hat{a}\derad,\qquad\text{for some}\quad \hat{a}\in\CinfM.
\end{eqnarray*}
This condition implies that $\derad$ can be regarded as an eigenvector of $\derX$. Arising from this $1$-system, the corresponding Lie algebroid structure on $A$ can be explicitly characterized through the following relations:
\begin{eqnarray*}
	\rhoA(y)
	&\hspace*{-0.5em}=\hspace*{-0.5em}&\Lang{y}{\derad}\cdot\derx
	,\\
	\BrA{y}{z}
	&\hspace*{-0.5em}=\hspace*{-0.5em}&\Lang{y}{\derad}\cdot\derX(z)
	-\Lang{z}{\derad}\cdot\derX(y)
	,\qquad\forall y,z\in\secA.
\end{eqnarray*}
\subsection{A special type of compatible $n$-systems}\label{special type example}
Let \( I \subset \mathbb{R} \) be an open subset on which smooth functions are represented as \( f = f(t) \), with \( t \) being the coordinate of \( I \). Consider the \( m \)-dimensional vector space \( V = \mathrm{Span}_{\mathbb{R}}\{w_1, w_2, \ldots, w_m\} \) and its dual space \( V^* = \mathrm{Span}_{\mathbb{R}}\{w^1, w^2, \ldots, w^m\} \). We define two product vector bundles \( A = I \times V \) and \( A^* = I \times V^* \) over $I$.

Let \(\derX_1, \derX_2, \ldots, \derX_n \in \Gamma(\mathrm{CDO}(A))\) ($n\leqslant m$) be differential operators, each associated with a symbol \(\derx_i = f_i \frac{d}{dt}\), where \( f_i \) are smooth functions on \( I \). The action of each operator \(\derX_i\) is specifically characterized by its operation on the basis vectors \( w^1,\cdots,w^m \) of \( V^* \), which are treated as constant sections in \(\secAstar\). Suppose that this action is explicitly expressed as:
\begin{eqnarray*}
	\derX_i\begin{pmatrix}w^1\\w^2\\\vdots\\w^m\end{pmatrix}
	=A_i\begin{pmatrix}w^1\\w^2\\\vdots\\w^m\end{pmatrix}
	=\begin{pmatrix}
		a_{i1}^1 &a_{i2}^1&\cdots&a_{im}^1\\
		a_{i1}^2&a_{i2}^2&\cdots&a_{im}^2\\
		\cdots&\cdots&\cdots&\cdots\\
		a_{i1}^m&a_{i2}^m&\cdots&a_{im}^m\\
	\end{pmatrix}
	\begin{pmatrix}w^1\\w^2\\\vdots\\w^m\end{pmatrix},
\end{eqnarray*}

where each $A_i$ ($i=1,\ldots,n$) is a $m\times m$ matrix composed by functions $a_{ik}^j=a_{ik}^j(t)\in C^\infty(I)$ ($1\leqslant j,k\leqslant m$). The collection $(\derX_1$, $\ldots$, $\derX_n$;  $w^1$, $\ldots$, $w^n)$ forms an $n$-system on $A$.

In what follows we only let the indices $i,j,k$ range from $1$ to $n$. The compatibility condition $\BrM{\derx_i}{\derx_j}=(a_{ji}^k-a_{ij}^k)\derx_k$ can be expressed  as:
\begin{eqnarray}\label{Eqn:compatible condition  of dull algebroid in special n system}
	\BrM{f_i\frac{d}{dt}}{f_j\frac{d}{dt}}=(a_{ji}^k-a_{ij}^k)f_k\frac{d}{dt}
	\quad\text{or}\quad
	f_if'_j-f_jf'_i=(a_{ji}^k-a_{ij}^k)f_k.
\end{eqnarray}

Furthermore, the condition $\BrCDO{\derX_i}{\derX_j}=(a_{ji}^k-a_{ij}^k)\derX_k$ implies Equation \eqref{Eqn:compatible condition  of dull algebroid in special n system} and the following equation:
\begin{eqnarray}\label{Eqn:compatible condition 2 of Lie algebroid in special n system}
	A_iA_j-A_jA_i+f_iA'_j-f_jA'_i
	=(a_{ji}^k-a_{ij}^k)A_k.
\end{eqnarray}
When $f_i$ and $A_i$ satisfy Equations \eqref{Eqn:compatible condition  of dull algebroid in special n system} and \eqref{Eqn:compatible condition 2 of Lie algebroid in special n system}, $(\derX_1,\ldots,\derX_n; w^1,\ldots,w^n)$ constitutes a compatible $n$-system. The associated Lie algebroid structure on $A=I\times V$ is explicitly given by
$$
\anchorn(w_i)=f_i\frac{d}{dt},\quad \anchorn(w_{n+1})=\cdots=\anchorn(w_{m})=0,
$$
and
$$
\BrAn{w_i}{w_j}
=\derX_i(w_j)-\derX_j(w_i)
=(a_{ji}^k-a_{ij}^k)w_k,
$$
$$
\BrAn{w_i}{w_p}=\derX_i(w_p) =-\sum_{s=1}^m a_{ip}^s w_s,\quad (n+1\leqslant p\leqslant m)
$$
$$
\BrAn{w_q}{w_p}=0 .\quad (n+1\leqslant q,p\leqslant m)
$$
In the special case that $A_1,A_2,\ldots,A_n$ are constant matrices, Equation \eqref{Eqn:compatible condition 2 of Lie algebroid in special n system} reduces to
\begin{eqnarray*}\label{Eqt:constantAiAjcondition}
	\Br{A_i}{A_j}
	=(a_{ji}^k-a_{ij}^k)A_k.
\end{eqnarray*}
\begin{Ex}
For \( m=n=1 \), we have only one element \(\derX_1 \in \Gamma(\CDO(A))\). Denote its symbol by \(\derx_1 = f\frac{d}{dt}\), and suppose that \(\derX_1(w^1) = g w^1\) where \(g \in C^\infty(I)\). Under these conditions, Equations \eqref{Eqn:compatible condition  of dull algebroid in special n system} and \eqref{Eqn:compatible condition 2 of Lie algebroid in special n system}  are automatically satisfied. As a consequence, the pair \((\derX_1; w^1)\) forms a compatible 1-system on \( A \), resulting in the associated Lie algebroid \((A, [\tobefilledin, \tobefilledin]_A, a_A)\). The structure maps for this Lie algebroid are given by
	\begin{eqnarray*}
		a_A\big(gw_1\big)&=&gf\frac{d}{dt},\\
		~[gw_1,hw_1]_A&=&(gh'-g'h)fw_1.
	\end{eqnarray*}
\end{Ex}

\begin{Ex}
	For $m=n=2$, let $c_1 \in \mathbb{R}$ be a constant, and consider the set $I := \{t \in \mathbb{R} \mid t \neq c_1\}$. Consider the functions $f_1 = 1$ and $f_2 = 0$, along with the following matrices
	\[
	A_1 = \begin{pmatrix} 0 & 0 \\ 0 & 0 \end{pmatrix}, \quad 
	A_2 = \begin{pmatrix} 
		0 & \frac{c_2}{c_1 - t} \\ 
		\frac{1}{c_1 - t} & \frac{c_3}{c_1 - t} 
	\end{pmatrix}, 
	\]
	where $c_2, c_3 \in \mathbb{R}$ are arbitrary constants. These matrices satisfy  Equations \eqref{Eqn:compatible condition  of dull algebroid in special n system} and  \eqref{Eqn:compatible condition 2 of Lie algebroid in special n system}, confirming $(\derX_1, \derX_2; w^1, w^2)$ as a compatible $2$-system on $A$. Furthermore, the associated Lie algebroid $(A, [\cdot, \cdot]_A, a_A)$ is characterized by the following  maps:
	\[
	a_A(w_1) = \frac{d}{dt}, \quad a_A(w_2) = 0, \quad [w_1, w_2]_A = \frac{1}{c_1 - t} w_2.
	\]
\end{Ex}
\begin{Ex}\label{Ex:1}
	Consider \( m=n=2 \) again. 
Let \( b \in \mathbb{R} \) be a constant, and \( I := \{ t \in \mathbb{R} \mid t \neq -b \} \) an open set. Take any non-negative integer \( k \neq 1 \), and we define the functions \( f_1 = (t+b)^k \) and \( f_2 = t+b \). The matrices \( A_1 \) and \( A_2 \) provided below are solutions to the equations derived from the conditions for compatible $2$-systems, specifically Equations \eqref{Eqn:compatible condition  of dull algebroid in special n system}, which simplifies to \( a_{21}^1 - a_{12}^1 = 1-k \) and \( a_{21}^2 - a_{12}^2 = 0 \), and \eqref{Eqn:compatible condition 2 of Lie algebroid in special n system}.
\begin{enumerate}
\item
Take constants  $c_1,c_2\in\R$, and set
\begin{eqnarray*}
A_1=\begin{pmatrix}0&0\\0&0\end{pmatrix},
\qquad
A_2=\begin{pmatrix}1-k&c_1\\0&c_2\end{pmatrix};
\end{eqnarray*}
\item
Take constants $c_3\in\R\backslash\{\frac{1-k}{2}\}$ and $c_4\in\R$, and set
\begin{eqnarray*}
A_1=\begin{pmatrix}0&c_3-\frac{1-k}{2}\\0&0\end{pmatrix},
\qquad
A_2=\begin{pmatrix}c_3+\frac{1-k}{2}&c_4\\0&
     \frac{4c_3^2+4(1-k)c_3-3(1-k)^2}{4c_3-2(1-k)}\end{pmatrix};
\end{eqnarray*}
\item
Take constant $c_5\in\R\backslash\{0\}$, and set
\begin{eqnarray*}
A_1=\begin{pmatrix}0&0\\c_5&0\end{pmatrix},
\qquad
A_2=\begin{pmatrix}1-k&0\\0&0\end{pmatrix};
\end{eqnarray*}
\item
Take constants $c_6\in\R\backslash\{0\}$ and $c_7\in\R\backslash\{\frac{1-k}{2}\}$, and set
\begin{eqnarray*}
A_1=\begin{pmatrix}c_6&c_7-\frac{1-k}{2}\\\frac{2c_6^2}{(1-k)-2c_7}&-c_6\end{pmatrix},
\qquad
A_2=\begin{pmatrix}c_7+\frac{1-k}{2}&\frac{4c_7^2-(1-k)^2}{4c_6}\\
     -c_6&-c_7+\frac{1-k}{2}\end{pmatrix};
\end{eqnarray*}
\item
Set
\begin{eqnarray*}
A_1=\begin{pmatrix}(t+b)^{k-1}&0\\0&(2-k)(t+b)^{k-1}\end{pmatrix},
\qquad
A_2=\begin{pmatrix}1-k&(t+b)^{1-k}\\(2-k)(t+b)^{k-1}&0\end{pmatrix}.
\end{eqnarray*}
\end{enumerate}	
Each of the above pairs $(A_1,A_2)$ gives rise to a compatible $2$-system $(\derX_1,\derX_2;w^1,w^2)$ defined on $A$.
\end{Ex}

\begin{Ex}\label{EX:10-25}
For the $m=n=3$ case, let $b_0,b_1\in\R$ be two constants and $I=\{t\in\R|t \neq b_0, t \neq b_1\}$.
Take $f_1=1$, $f_2=0$, $f_3=0$,
\begin{eqnarray*}
A_1=\begin{pmatrix}0& 0&0\\0& 0&0\\0&0&0\end{pmatrix},
\quad
A_2=\begin{pmatrix}0& 0&0\\\frac{1}{b_0-t}& 0&0\\0&0&0\end{pmatrix}
\quad\text{and}\quad
A_3=\begin{pmatrix}0& 0& 0\\\frac{b_3}{(b_0-t)(b_1-t)}& 0& \frac{b_2}{b_1-t}\\
     \frac{1}{b_1-t}& 0& \frac{b_4}{b_1-t}\end{pmatrix},
\end{eqnarray*}
where $b_2,b_3,b_4\in\R$ can be arbitrary.
Then Equations \eqref{Eqn:compatible condition  of dull algebroid in special n system} and \eqref{Eqn:compatible condition 2 of Lie algebroid in special n system} can be verified.
 Hence  $(\derX_1$, $\derX_2$, $\derX_3$; $w^1$, $w^2$, $w^3)$ is a compatible $3$-system on $A$.  For the associated Lie algebroid  $(A,[\tobefilledin,\tobefilledin]_A,a_A)$, we have
\begin{eqnarray*}
a_A(w_1)&\hspace*{-0.5em}=\hspace*{-0.5em}&\frac{d}{dt},\\
a_A(w_2)&\hspace*{-0.5em}=\hspace*{-0.5em}&0,\\
a_A(w_3)&\hspace*{-0.5em}=\hspace*{-0.5em}&0,\\
~[w_1,w_2]_A
&\hspace*{-0.5em}=\hspace*{-0.5em}&
-\frac{1}{b_0-t}w_2
,
\\
~[w_1,w_3]_A
&\hspace*{-0.5em}=\hspace*{-0.5em}&
-\frac{b_3}{(b_0-t)(b_1-t)}w_2
-\frac{1}{b_1-t}w_3
,
\\
~[w_2,w_3]_A
&\hspace*{-0.5em}=\hspace*{-0.5em}&
0
.
\end{eqnarray*}
\end{Ex}

\subsection{DG manifolds from compatible $n$-systems}\label{DG} In this  section, we consider  differential graded (DG) manifolds derived from  compatible $n$-systems. Let $V$ be a $\mathbb{Z}$-graded vector space over $\mathbb{R}$, represented as $V = \oplus_{n\in \mathbb{Z}}V_n$, where each element $v\in V_n$ is assigned a degree $n$, denoted as $|v|=n$. Suppose that $A$ is a Lie algebroid  arising from a compatible $n$-system $(\widehat{X_1}, \ldots, \widehat{X_n}; \widehat{\xi^1}, \ldots, \widehat{\xi^n})$ underlying the vector bundle $A$. Given these data, we can construct a DG manifold $(A[1]=(M,\Gamma(\wedge^\bullet A^*)),D)$, where $D:\Gamma(\wedge^\bullet A^*) \to \Gamma(\wedge^{\bullet+1} A^*)$ is defined as the Chevalley-Eilenberg differential of the Lie algebroid.  In fact, this differential can be expressed as $D:=\widehat{\xi^i}\otimes \widehat{X_i}$, by some easy  verification.
In conclusion, $(A[1],D=\derad^i\widehat{\otimes} \derX_i)$ constitutes a DG manifold derived from the said compatible $n$-system.

\subsection{Construction of  Lie pairs from  compatible $n$-systems}\label{Lie pair}

Consider a compatible $n$-system $(\derX_1$, $\ldots$, $\derX_n$; $\derad^1, \ldots, \derad^n)$ defined on $A$. Suppose that  the covariant differential operators $(\derX_1, \ldots, \derX_n)$ within this system can be  divided into two subsets: $(\derX_1, \ldots, \derX_k)$ and $(\derX_{k+1}, \ldots, \derX_n)$, and the subset $(\derX_1, \ldots, \derX_k)$ exhibits closure under the commutator operation, and satisfies the following condition:
\begin{eqnarray}
	\label{Eqn:compatible condition of Lie pair in  n system}
	\BrCDO{\derX_{l_1}}{\derX_{l_2}}
	=\sum_{l_3=1}^k
	(a_{l_2l_1}^{l_3}-a_{l_1l_2}^{l_3})\derX_{l_3},
	\qquad\forall l_1,l_2=1,2,\ldots,k.
\end{eqnarray}
Here, the functions  $a_{ik}^j \in \CinfM$ are defined in Equation \eqref{Eqn:compatible condition 1 of Lie algebroid in n system}. The equation above corresponds to the following condition:
\begin{eqnarray*}
	a_{l_2l_1}^p
	=a_{l_1l_2}^p
	,
	\qquad\forall
	~l_1,l_2=1,2,\ldots,k; ~p=k+1,k+2,\ldots,n.
\end{eqnarray*}

We can define a subbundle $S$ of $A$ as
$$
S:=\{s\in A|
\lang{s}{\derad^{k+1}}=\lang{s}{\derad^{k+2}}=\cdots=\lang{s}{\derad^n}=0\}.
$$

Under these conditions, $(A,S)$  forms a Lie pair  as defined in \cite{CSX2}. Recall that a Lie pair defined as follows-- A Lie pair $(A,S)$ is a pair consisting of two Lie algebroids 
over the same manifold $M$ together with a Lie algebroid inclusion $S\hookrightarrow A$ (covering the identity map $\id_M$).

 Resume the setting of $(A,S)$ as earlier. The structure maps of the  Lie subalgebroid $S$ are
\begin{eqnarray*}
	a_S(s)
	&:=\hspace*{-0.5em}&\anchorn(s)
	=\sum_{l=1}^k \lang{s}{\derad^l}\derx_l
	,
	\qquad\forall s\in S,
	\\
	~[s_1,s_2]_S
	&:=\hspace*{-0.5em}&\BrAn{s_1}{s_2}
	=\sum_{l=1}^k\Lang{s_1}{\derad^l}\cdot\derX_l(s_2)
	-\sum_{l=1}^k\Lang{s_2}{\derad^l}\cdot\derX_l(s_1)
	,
	\qquad
	\forall s_1,s_2\in S.
\end{eqnarray*}

The straightforward verification of these formulas is omitted for brevity. A simple method for constructing Lie pairs is to choose any covariant differential operator $\derX_i$. Using the method above, we can define a non-trivial Lie subalgebroid $S$ such that $(L,S)$ forms a Lie pair.

\begin{Ex}
Following Example \ref{EX:10-25} of a compatible $3$-system, we can form a sub-vector bundle of $A$,
$$S=\mathrm{Span}_{\mathbb{R}}\{w_1,w_2\}.$$ Moreover, the structure maps of the Lie subalgebroid $S$ are given by
\begin{eqnarray*}
a_A(w_1)&\hspace*{-0.5em}=\hspace*{-0.5em}&\frac{d}{dt},\quad a_A(w_2)=0,\\
~[w_1,w_2]_A
&\hspace*{-0.5em}=\hspace*{-0.5em}&
-\frac{1}{b_0-t}w_2.
\end{eqnarray*}
Therefore, $(A,S)$  forms a Lie pair.
\end{Ex}

\section{From metric connections to Courant algebroids}\label{sec 3}

This section introduces compatible \textit{metric} $n$-systems on \textit{metric} bundles. We present   characterizations of metric algebroids, pre-Courant algebroids, and Courant algebroids through metric connections and metric $n$-systems.
\subsection{Metric connections and metric algebroids}\label{sec 3.1}
Consider a vector bundle $E \rightarrow M$ equipped with a pseudo-metric, which is a non-degenerate symmetric bilinear form denoted by $\lang{\tobefilledin}{\tobefilledin}$. In addition, we assume a bundle map $a_E: E \rightarrow TM$ is given, and designate it as the anchor map. The triple $(E, \lang{\tobefilledin}{\tobefilledin}, a_E)$ is called an \textbf{anchored metric bundle}. Given this data, a connection $\nabla:\secE\times\secE\rightarrow\secE$ on the anchored metric bundle $(E,\lang{\tobefilledin}{\tobefilledin},a_E)$ is called a  \textbf{metric connection} if it satisfies the compatibility condition:
\begin{eqnarray}
	\label{Eqn:compatible condition of pseudo-metric and metric connection}
	a_E(e_1)\lang{e_2}{e_3}=\Lang{\nabla_{e_1}e_2}{e_3}+\Lang{e_2}{\nabla_{e_1}e_3},\quad\forall e_1,e_2,e_3\in\secE.
\end{eqnarray}
The metric connection $\nabla$ is a special instance of the generalized connection described in \cite{CPR}.
To see the existence of such a connection, one takes an arbitrary $TM$-connection $\nabla^0:\secM\times\secE\rightarrow\secE$, and define two auxiliary operators $H:\secM\otimes\secE\rightarrow\secE$ and $\nM:\secM\times\secE\rightarrow\secE$ as follows:
\begin{eqnarray*}
	&&\Lang{H(X,e_1)}{e_2}:=X\lang{e_1}{e_2}-\Lang{\nabla^0_{X}e_1}{e_2}-\Lang{e_1}{\nabla^0_{X}e_2},\\
	&&\nM_{X}e_1:=\nabla^0_{X}e_1+\frac{1}{2}H(X,e_1),
\end{eqnarray*}
where $X\in\secM$ and $e_1,e_2\in\secE$. One can easily check that $$H(X,fe_1)=H(fX,e_1)=fH(X,e_1),\quad \forall f\in C^\infty(M).$$ Then, the operator $\nM$ defines a $TM$-connection on $E$ and satisfies the following condition:
\begin{eqnarray*}
	X\lang{e_1}{e_2}=\Lang{\nM_{X}e_1}{e_2}+\Lang{e_1}{\nM_{X}e_2}.
\end{eqnarray*}

Subsequently, we define a metric connection $\nabla:\secE\times\secE\rightarrow\secE$ as the pullback of $\nM$:
\begin{eqnarray*}
	\nabla_{e_1}e_2:=\nM_{a_E(e_1)}e_2,\quad\forall e_1,e_2\in\secE.
\end{eqnarray*}

Then, this connection $\nabla$ satisfies the compatibility condition:
\begin{eqnarray*}
	\Lang{\nabla_{e_1}e_2}{e_3}+\Lang{e_2}{\nabla_{e_1}e_3}=\Lang{\nM_{a_E(e_1)}e_2}{e_3}+\Lang{e_2}{\nM_{a_E(e_1)}e_3}=a_E(e_1)\lang{e_2}{e_3}.
\end{eqnarray*}
\begin{definition}
	Given a metric connection $\nabla$ on an anchored metric bundle $\big(E, \lang{\tobefilledin}{\tobefilledin}, a_E\big)$, we defined an associated bracket $\circnabla:~\Gamma(E)\times\Gamma(E)\rightarrow\Gamma(E)$  on $\Gamma(E)$ through the following relation: 
	\begin{align}\label{Def: Dorfman bracket of metric algebroid} \langle e_1\circnabla e_2, e_3\rangle:=\langle\nabla_{e_1} e_2, e_3\rangle-\langle\nabla_{e_2} e_1, e_3\rangle+\langle\nabla_{e_3} e_1, e_2\rangle. 
	\end{align}
 Furthermore, define an operator $\Delta:~\Gamma(E)\times\Gamma(E)\rightarrow\Gamma(E)$ by \begin{align}\label{Eqt:Delta} \langle\Delta_{e_2}e_1, e_3\rangle:=\langle\nabla_{e_3}e_1, e_2\rangle,\quad\forall e_1,e_2,e_3\in\Gamma(E). \end{align}
\end{definition}
We can easily verify  the following identities:
\begin{eqnarray*}
	\Delta_{(fe_2)}e_1&=&f\Delta_{e_2}e_1,\\
	\Delta_{e_2}(fe_1)&=&f\Delta_{e_2}e_1+\lang{e_1}{e_2}\cdot \D(f),\\
	\D\lang{e_1}{e_2}&=&\Delta_{e_1}e_2+\Delta_{e_2}e_1,
\end{eqnarray*}
where $\D: \CinfM \rightarrow \secE$ is given by $\lang{ \D(f)}{e} = a_E(e)(f)$ for all $f \in \CinfM$.

The associated  bracket  $\circnabla$ can also be expressed by
\begin{eqnarray}\label{Def: Dorfman bracket in brief of metric algebroid}
	e_1\circnabla e_2=\nabla_{e_1}e_2-\nabla_{e_2}e_1+\Delta_{e_2}e_1.
\end{eqnarray}
Consequently,  we obtain the following identities:
\begin{eqnarray*}
	a_E(e)\lang{h_1}{h_2}&=&\lang{e\circnabla h_1}{h_2}+\lang{h_1}{e\circnabla h_2},\\
	e\circnabla e&=&\frac{1}{2}\D\lang{e}{e}.
\end{eqnarray*}

Thus, the quadruple $\big(E,\lang{\tobefilledin}{\tobefilledin},\circnabla,a_E\big)$ forms a metric algebroid. 
\begin{prop}\label{prop:surj-Metric algebroid}
	Let  $\big(E,\lang{\tobefilledin}{\tobefilledin},\circ,a_E\big)$ be a metric algebroid. Then there exists a metric connection $\nabla$ on the anchored metric bundle $\big(E,\lang{\tobefilledin}{\tobefilledin},a_E\big)$ such that the operation $\circ$ is exactly the operation $\circnabla$, as defined in Equation \eqref{Def: Dorfman bracket of metric algebroid} or \eqref{Def: Dorfman bracket in brief of metric algebroid}.
\end{prop}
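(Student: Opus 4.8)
The plan is to reverse-engineer a metric connection from the given Dorfman bracket by imposing the defining relation \eqref{Def: Dorfman bracket of metric algebroid} and solving for $\nabla$. First I would start from an arbitrary metric connection $\nabla^{0}$ on the anchored metric bundle $(E,\lang{\tobefilledin}{\tobefilledin},a_E)$ — such a connection exists by the explicit averaging construction carried out just before the statement (take any $TM$-connection, symmetrize it against the metric, and pull back along $a_E$). The difference between the sought $\nabla$ and $\nabla^{0}$ should be an $E$-tensorial correction, so I would posit $\nabla_{e_1}e_2 = \nabla^{0}_{e_1}e_2 + \Phi(e_1,e_2)$ for a bundle map $\Phi: E\otimes E\to E$ to be determined, and I would similarly introduce the discrepancy $\Psi(e_1,e_2)$ between the metric algebroid's $\circ$ and the bracket $\circnabla[\nabla^0]$ built from $\nabla^0$ via \eqref{Def: Dorfman bracket of metric algebroid}. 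The first genuine check is that $\Psi$ is $\CinfM$-bilinear: this follows because both $\circ$ and $\circnabla[\nabla^0]$ satisfy the metric-algebroid Leibniz rule in the second slot with the \emph{same} anchor $a_E$, and in the first slot the failure of $\CinfM$-linearity of $e_1\circ(fe_2)$-type terms is governed by $a_E$ and by the metric via $\D$; so the anchor- and $\D$-dependent terms cancel in the difference, leaving $\Psi$ tensorial.

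Next I would translate the desired identity $\circ = \circnabla[\nabla]$ into an equation for $\Phi$. Pairing everything with a third section $e_3$ and using \eqref{Def: Dorfman bracket of metric algebroid} for both brackets, the requirement becomes
\[
\lang{\Psi(e_1,e_2)}{e_3} = \lang{\Phi(e_1,e_2)}{e_3} - \lang{\Phi(e_2,e_1)}{e_3} + \lang{\Phi(e_3,e_1)}{e_2},
\]
for all $e_1,e_2,e_3$. The task is to produce a tensor $\Phi$ solving this, subject to the side condition that $\nabla^{0}+\Phi$ is still a \emph{metric} connection, i.e.\ that $\Phi$ is skew in the sense $\lang{\Phi(e_1,e_2)}{e_3} + \lang{\Phi(e_1,e_3)}{e_2} = 0$ (so that \eqref{Eqn:compatible condition of pseudo-metric and metric connection} is preserved, since $\nabla^0$ already satisfies it). Writing $T(e_1,e_2,e_3) := \lang{\Phi(e_1,e_2)}{e_3}$, I need $T$ antisymmetric in the last two arguments and satisfying $T(e_1,e_2,e_3) - T(e_2,e_1,e_3) + T(e_3,e_1,e_2) = S(e_1,e_2,e_3)$, where $S(e_1,e_2,e_3):=\lang{\Psi(e_1,e_2)}{e_3}$. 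Here I would exploit a symmetry of $S$ itself: because $\circ$ and $\circnabla[\nabla^0]$ both satisfy axiom $(i)$ of a metric algebroid (compatibility with the metric) with the same anchor, $S$ is antisymmetric in its first two arguments, i.e.\ $S(e_1,e_2,e_3) = -S(e_2,e_1,e_3)$; and because both satisfy $e\circ e = \tfrac12\D\lang e e$ — equivalently $\lang{e\circ h}{k}+\lang{h\circ e}{k}$ is symmetric in the appropriate sense — one also gets the cyclic-type relation $S(e_1,e_2,e_3)+S(e_2,e_3,e_1)+S(e_3,e_1,e_2)=0$. Granting these, the candidate $T(e_1,e_2,e_3) := \tfrac13\big(S(e_1,e_2,e_3) - S(e_1,e_3,e_2)\big) = \tfrac13\big(2S(e_1,e_2,e_3) + S(e_2,e_3,e_1)\big)$ is manifestly antisymmetric in $e_2,e_3$, and a short bookkeeping computation using the two symmetries of $S$ verifies the displayed equation; this defines $\Phi$ by nondegeneracy of the metric.

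Finally I would assemble the pieces: set $\nabla := \nabla^{0} + \Phi$ with $\Phi$ as above. Then $\nabla$ is a connection (the tensorial correction does not disturb the Leibniz/linearity axioms of Definition \ref{def-connection}), it is metric because $T$ is antisymmetric in its last two slots and $\nabla^0$ was already metric, and by construction the associated bracket $\circnabla$ from \eqref{Def: Dorfman bracket of metric algebroid} agrees with $\circ$; the equivalent form \eqref{Def: Dorfman bracket in brief of metric algebroid} then also holds since $\Delta$ is determined by $\nabla$ and the metric. I expect the main obstacle to be the second step: pinning down precisely which symmetries of the discrepancy tensor $S$ follow from the two metric-algebroid axioms $(i)$ and $(ii)$, and checking that those symmetries are exactly what is needed for the averaging ansatz for $T$ to solve the linear equation while remaining metric-compatible. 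Everything else is routine tensor calculus with the standard pairing.
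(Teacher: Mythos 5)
Your overall strategy coincides with the paper's: start from an arbitrary metric connection $\nabla^0$, form the tensorial discrepancy $\Psi$ between $\circ$ and the bracket induced by $\nabla^0$, and correct $\nabla^0$ by a bundle map read off from $\Psi$. The reduction to the linear equation $S(e_1,e_2,e_3)=T(e_1,e_2,e_3)-T(e_2,e_1,e_3)+T(e_3,e_1,e_2)$ subject to $T$ being antisymmetric in its last two slots, and the argument that $\Psi$ is $\CinfM$-bilinear, are both correct.

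The gap is in the symmetry analysis of $S$, and your formula for $T$ fails because of it. Axiom $(i)$ (invariance of the pairing), written for both brackets and subtracted, gives $S(e,h_1,h_2)+S(e,h_2,h_1)=0$, i.e.\ antisymmetry in the \emph{last} two arguments --- not the first two, as you claim. Axiom $(ii)$, polarized, gives $\Psi(e_1,e_2)+\Psi(e_2,e_1)=0$, i.e.\ antisymmetry in the \emph{first} two arguments --- not a cyclic relation. Together these force $S$ to be \emph{totally} antisymmetric, and then the cyclic sum you assert to vanish actually equals $3S(e_1,e_2,e_3)$; your two claimed symmetries are mutually inconsistent unless $S=0$. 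Consequently your ansatz does not solve the equation: once total antisymmetry is known, $T:=\frac13\big(S(e_1,e_2,e_3)-S(e_1,e_3,e_2)\big)=\frac23 S(e_1,e_2,e_3)$, and substitution into the linear equation yields $2S$ rather than $S$ (while under your assumed symmetries the same bookkeeping yields $0$). The repair is simpler than your ansatz: take $T=\frac13 S$, i.e.\ $\nabla=\nabla^0+\frac13\Psi$. Total antisymmetry of $S$ makes $T$ antisymmetric in its last two slots (so $\nabla$ remains metric) and gives $T(e_1,e_2,e_3)-T(e_2,e_1,e_3)+T(e_3,e_1,e_2)=\frac13(S+S+S)=S$. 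This $\frac13$-correction is exactly the paper's construction.
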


\begin{proof}
	Consider an arbitrary metric connection $\nabla^0: \secE \times \secE \rightarrow \secE$ on the metric vector bundle $E$.  Consequently, we have an operator $\overset{\nabla^0}{\circ}: \secE \times \secE \rightarrow \secE$ defined as follows:
	\begin{eqnarray*}
		\Lang{e_1 \overset{\nabla^0}{\circ} e_2}{e_3}
		:= \Lang{\nabla^0_{e_1} e_2 - \nabla^0_{e_2} e_1}{e_3}
		+ \Lang{\nabla^0_{e_3} e_1}{e_2},
	\end{eqnarray*}
	for all  $e_1, e_2, e_3 \in \secE$, and we have a metric algebroid $\big(E, \lang{\tobefilledin}{\tobefilledin}, \overset{\nabla^0}{\circ}, a_E\big)$. However, $\overset{\nabla^0}{\circ}$ is different from the given operator $\circ$. To derive the desired connection $\nabla$, we introduce an operator $K: \secE \otimes \secE \rightarrow \secE$, which is defined by
	\begin{eqnarray*}
		K(e_1, e_2)
		:= e_1 \circ e_2
		- e_1 \overset{\nabla^0}{\circ} e_2,
		\qquad \forall e_1, e_2 \in \secE.
	\end{eqnarray*}
	Through direct calculation, it can be verified that $K$ is skew-symmetric and $\CinfM$-bilinear, and  satisfies the following identity:
	\begin{eqnarray*}
		\Lang{K(e_1, e_2)}{e_3} + \Lang{e_2}{K(e_1, e_3)} = 0,
		\qquad \forall e_1, e_2, e_3 \in \secE.
	\end{eqnarray*}
	Utilizing these properties, we construct the desired connection $\nabla: \secE \times \secE \rightarrow \secE$ as
	\begin{eqnarray*}
		\nabla_{e_1} e_2
		:= \nabla^0_{e_1} e_2
		+ \frac{1}{3} K(e_1, e_2),
		\qquad \forall e_1, e_2 \in \secE.
	\end{eqnarray*}
	It can be demonstrated  that $\nabla$ is indeed a metric connection on $\big(E, \lang{\tobefilledin}{\tobefilledin}, a_E\big)$, satisfying the following equation:
	\begin{eqnarray*}
		\Lang{e_1 \circ e_2}{e_3}
		= \Lang{\nabla_{e_1} e_2 - \nabla_{e_2} e_1}{e_3}
		+ \Lang{\nabla_{e_3} e_1}{e_2},
		\qquad \forall e_1, e_2, e_3 \in \secE.
	\end{eqnarray*}
	This completes the proof.
\end{proof}

\begin{definition}
	Two metric connections $\nabla$ and $\nabla'$ are said to be equivalent, written as $\nabla\sim \nabla'$, if their difference is   a vector bundle map
$T:E\otimes E\rightarrow E$  satisfying the property
	$$\Lang{T(e_2,e_1)}{e_3}=\Lang{T(e_1,e_2)}{e_3}+\Lang{T(e_3,e_1)}{e_2},
\qquad \forall e_1,e_2,e_3\in\secE. $$
	
\end{definition}

Using Proposition \ref{prop:surj-Metric algebroid}, we can directly derive the next proposition.
\begin{prop}
There exists a one-to-one correspondence between   equivalence classes of metric connections on the anchored metric bundle $(E, \lang{\tobefilledin}{\tobefilledin}, a_E)$ and   metric algebroid structures underlying  $\big(E,\lang{\tobefilledin}{\tobefilledin}, a_E\big)$.
\end{prop}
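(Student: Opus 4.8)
The plan is to mirror the proof of Proposition~\ref{equivalence classes of connections on anchored bundle}, with Proposition~\ref{prop:surj-Metric algebroid} playing the role that Proposition~\ref{pure-connection-correspondence} played there. Let $\Phi$ be the assignment that sends a metric connection $\nabla$ on $(E,\lang{\tobefilledin}{\tobefilledin},a_E)$ to the metric algebroid $\big(E,\lang{\tobefilledin}{\tobefilledin},\circnabla,a_E\big)$, with $\circnabla$ defined by Equation~\eqref{Def: Dorfman bracket of metric algebroid}. The discussion preceding Proposition~\ref{prop:surj-Metric algebroid} already guarantees that $\Phi(\nabla)$ is a metric algebroid structure underlying $(E,\lang{\tobefilledin}{\tobefilledin},a_E)$, and Proposition~\ref{prop:surj-Metric algebroid} says precisely that $\Phi$ is surjective onto the set of such structures. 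So the entire content of the proposition is the assertion that, for two metric connections $\nabla$ and $\nabla'$, one has $\Phi(\nabla)=\Phi(\nabla')$ exactly when $\nabla\sim\nabla'$; granting this, $\Phi$ descends to a well-defined injective map on $\sim$-classes, which together with surjectivity yields the claimed bijection.

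First I would set $T(e_1,e_2):=\nabla'_{e_1}e_2-\nabla_{e_1}e_2$ and verify that $T$ is $\CinfM$-bilinear, hence a genuine bundle map $T:E\otimes E\to E$. Indeed both connections are connections on the same anchored bundle $(E,a_E)$, so in the difference the first slot is $\CinfM$-linear termwise while the anchor term $a_E(e_1)(f)\,e_2$ produced by moving a function past the second slot is identical for $\nabla$ and $\nabla'$ and cancels. Subtracting the compatibility identity \eqref{Eqn:compatible condition of pseudo-metric and metric connection} for $\nabla'$ and for $\nabla$ moreover gives $\Lang{T(e_1,e_2)}{e_3}+\Lang{e_2}{T(e_1,e_3)}=0$, the skew-adjointness implicit in the definition of $\sim$; this is worth recording, though it is not needed for the bijection. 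Since the defining relation of $\sim$ is $\CinfM$-linear in $T$, it follows at once that $\sim$ is an equivalence relation on the set of metric connections.

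The substance of the proof is then a single computation: writing out Equation~\eqref{Def: Dorfman bracket of metric algebroid} for $\nabla'$ and subtracting the same identity for $\nabla$, every connection term either cancels or combines into a term of $T$, leaving
\[
\Lang{e_1\overset{\nabla'}{\circ}e_2-e_1\circnabla e_2}{e_3}
=\Lang{T(e_1,e_2)-T(e_2,e_1)}{e_3}+\Lang{T(e_3,e_1)}{e_2},
\qquad\forall e_1,e_2,e_3\in\secE.
\]
Because $\lang{\tobefilledin}{\tobefilledin}$ is nondegenerate and $\Phi(\nabla)$, $\Phi(\nabla')$ already share the same metric and anchor, the equality $\Phi(\nabla)=\Phi(\nabla')$ is equivalent to the vanishing of the right-hand side above for all $e_3\in\secE$, that is, to $\Lang{T(e_2,e_1)}{e_3}=\Lang{T(e_1,e_2)}{e_3}+\Lang{T(e_3,e_1)}{e_2}$ for all $e_1,e_2,e_3\in\secE$ --- which is exactly the condition defining $\nabla\sim\nabla'$. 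This closes the argument. I do not anticipate a genuine obstacle; the only points requiring care are getting the order of the two arguments of $T$ (equivalently, the sign conventions in the displayed identity) to match the definition of equivalent metric connections, and being explicit that ``metric algebroid structures underlying $(E,\lang{\tobefilledin}{\tobefilledin},a_E)$'' means the metric and anchor are held fixed, so that only the Dorfman bracket is free to vary.
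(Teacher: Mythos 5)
Your proposal is correct and follows exactly the route the paper intends: the paper simply states that the proposition is ``directly derived'' from Proposition \ref{prop:surj-Metric algebroid}, and your argument supplies the omitted details (surjectivity from that proposition, plus the computation showing $\Phi(\nabla)=\Phi(\nabla')$ if and only if the difference $T=\nabla'-\nabla$ satisfies the defining identity of $\sim$). The displayed identity and the matching with the definition of equivalent metric connections check out, so nothing further is needed.
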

\begin{definition}
	Assume that $\big(E$, $\lang{\tobefilledin}{\tobefilledin}$, $\circnabla$, $a_E\big)$
	is a metric algebroid arising from a metric connection $\nabla$.
Define  a binary operation $\cnabla :\secE\times\secE\rightarrow\Gamma(\DE)$  by
	\begin{eqnarray}\label{Def: newcurvature-CA}
		\cnabla (e_1,e_2)
		&:=&
		\nabla_{e_1} \nabla_{e_2}
		-\nabla_{e_2} \nabla_{e_1}
		-\nabla_{(e_1\circnabla  e_2)}
		\nonumber\\&=&
		\nabla_{e_1} \nabla_{e_2}
		-\nabla_{e_2} \nabla_{e_1}
		-\nabla_{(\nabla_{e_1} e_2)}
		+\nabla_{(\nabla_{e_2} e_1)}
		-\nabla_{(\Delta_{e_2}e_1)},
	\end{eqnarray}
	for all $e_1,e_2\in\secE$.
	We  call $\cnabla$ the \textbf{Courant curvature } associated with the metric connection $\nabla$.
\end{definition}
\begin{Rem}
	Consider a pure algebroid $\big(A$, $\Brn{\tobefilledin}{\tobefilledin}$, $a_A\big)$ arising from a connection $\nabla$, and a trivial Lie algebroid $\big(A^*$, $\BrAstar{\tobefilledin}{\tobefilledin}=0$, $a_{A^*}=0\big)$. Using these structures, we can construct an anchored metric bundle $(E=A\oplus A^*,\lang{\tobefilledin}{\tobefilledin}, a_E=\rhoA)$, where $\lang{\tobefilledin}{\tobefilledin}$ is the standard pairing as defined in \eqref{Eqn:standard pairing}. This construction naturally induces a metric connection
	$$\widehat{\nabla}:\secE\times\secE\rightarrow\secE$$
	 defined by
	\begin{eqnarray*}		\widehat{\nabla}_{x+\xi}(y+\eta):=\nabla_xy+\nabla_x\eta,
	\end{eqnarray*}
	for all  $x+\xi,y+\eta\in\secE$.
	
	The above structures rise to a metric algebroid $\big(E,\lang{\tobefilledin}{\tobefilledin}, \overset{\widehat{\nabla}}{\circ},a_E=\rhoA\big)$, where the Dorfman bracket $\overset{\widehat{\nabla}}{\circ}$ can be explicitly expressed as
	\begin{eqnarray*}
		(x+\xi)\overset{\widehat{\nabla}}{\circ}(y+\eta)
		&=&\Brn{x}{y}
		+
		L_x\eta
		-\iota_y(d_A \xi),
	\end{eqnarray*}
	for all  $x+\xi,y+\eta\in\secE$. The associated Courant curvature $C^{\widehat{\nabla}}$ is of the form
	\begin{eqnarray*}
		C^{\widehat{\nabla}}(x+\xi,y+\eta)
		=\rnabla (x,y).
	\end{eqnarray*}
This formulation shows that Courant curvatures naturally extend    curvatures  of pure algebroids.
\end{Rem}
By direct computations, we can verify that the Courant curvature $\cnabla $ satisfies the following properties:
\begin{eqnarray}
\cnabla (e_1,e_2)
+\cnabla (e_2,e_1)
&\hspace*{-0.5em}=\hspace*{-0.5em}&-\nabla_{\D\lang{e_1}{e_2}},
\nonumber\\
\cnabla (fe_1,e_2)
&\hspace*{-0.5em}=\hspace*{-0.5em}&f\cnabla (e_1,e_2)
-\lang{e_1}{e_2}\cdot\nabla_{\D(f)}
,
\nonumber\\
\label{Eqn: Cnabla property-3}
\cnabla (e_1,fe_2)
&\hspace*{-0.5em}=\hspace*{-0.5em}&f\cnabla (e_1,e_2)
,
\\
\label{Eqn: Cnabla property-4}
\cnabla (e_1,e_2)(fe_3)
&\hspace*{-0.5em}=\hspace*{-0.5em}&f\cnabla (e_1,e_2)e_3
+\big(
\BrM{a_E(e_1)}{a_E(e_2)}-a_E(e_1\circnabla  e_2)
\big)(f)\cdot e_3,
\end{eqnarray}
for all $e_1,e_2,e_3\in\secE, f\in\CinfM$.

\begin{Rem}
The curvature $R_{\nabla}$ introduced in \cite{MJ2018} differs from our  Courant curvature $C^\nabla$ due to two reasons -- $(1)$ What she called the Dorfman connection is not our metric connection; and $(2)$, her  $R_{\nabla}$ is not linear in its second argument, i.e., $R_{\nabla}(e_1,fe_2)\neq fR_{\nabla}(e_1,e_2)$ for all $e_1,e_2\in\Gamma(E)$ and $f\in\CinfM$.
\end{Rem}
\begin{prop}\label{Prop:preCourantCondition}
The quadruple $\big(E,\lang{\tobefilledin}{\tobefilledin},\circnabla,a_E\big)$ is a pre-Courant algebroid if and only if   either of the following  equivalent conditions holds:
\begin{itemize}
	\item The operator $\cnabla(e_1,e_2):\secE\rightarrow\secE$ is  $\CinfM$-linear  for all  $e_1,e_2\in\secE$;
	\item For any   $e\in\secE$ and   $f\in\CinfM$, the relation $\D(f)\circnabla e=0$ holds.
\end{itemize}
\end{prop}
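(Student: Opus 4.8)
The plan is to establish the two equivalences ``pre-Courant $\Leftrightarrow$ $\cnabla(e_1,e_2)$ is $\CinfM$-linear in its argument'' and ``pre-Courant $\Leftrightarrow$ $\D(f)\circnabla e=0$'' separately, each being a short consequence of an identity already recorded in the excerpt. Throughout I would use that $\big(E,\lang{\tobefilledin}{\tobefilledin},\circnabla,a_E\big)$ is already known to be a metric algebroid, so that Equation \eqref{Eqn:Metric algebroid D and anchor condition} is available for the bracket $\circnabla$, together with the nondegeneracy of the metric $\lang{\tobefilledin}{\tobefilledin}$.

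For the first equivalence, I would read off from Equation \eqref{Eqn: Cnabla property-4} that for all $e_1,e_2,e_3\in\secE$ and $f\in\CinfM$,
\[
\cnabla(e_1,e_2)(fe_3)-f\,\cnabla(e_1,e_2)e_3=\big(\BrM{a_E(e_1)}{a_E(e_2)}-a_E(e_1\circnabla e_2)\big)(f)\cdot e_3,
\]
so that the symbol of the covariant differential operator $\cnabla(e_1,e_2)$ is precisely the vector field $\BrM{a_E(e_1)}{a_E(e_2)}-a_E(e_1\circnabla e_2)$. Since $E$ admits a section not vanishing at a prescribed point, the right-hand side vanishes for all $e_3$ and $f$ exactly when this vector field is zero; hence $\cnabla(e_1,e_2)$ is $\CinfM$-linear for all $e_1,e_2$ if and only if $a_E(e_1\circnabla e_2)=\BrM{a_E(e_1)}{a_E(e_2)}$ for all $e_1,e_2$, which is precisely Condition \eqref{Compat-cond-pre-Courant} defining a pre-Courant algebroid.

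For the second equivalence, I would specialize Equation \eqref{Eqn:Metric algebroid D and anchor condition} to $\circnabla$, giving
\[
\Lang{\D(f)\circnabla e}{h}=\big(a_E(e\circnabla h)-\BrM{a_E(e)}{a_E(h)}\big)(f),\qquad\forall e,h\in\secE,\ f\in\CinfM.
\]
If the pre-Courant condition holds, the right-hand side is zero for every $h$, so nondegeneracy of $\lang{\tobefilledin}{\tobefilledin}$ forces $\D(f)\circnabla e=0$. Conversely, if $\D(f)\circnabla e=0$ for all $e,f$, then $\big(a_E(e\circnabla h)-\BrM{a_E(e)}{a_E(h)}\big)(f)=0$ for all $f$, hence $a_E(e\circnabla h)=\BrM{a_E(e)}{a_E(h)}$ for all $e,h$, recovering Condition \eqref{Compat-cond-pre-Courant}. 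Combining the two equivalences closes the loop among the three conditions.

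I do not anticipate a genuine obstacle: every step follows directly from the two displayed identities together with nondegeneracy of the metric and the existence of nonvanishing local sections of $E$. The only point requiring a line of care is the passage from the pointwise relation $(\cdots)(f)\cdot e_3=0$, valid for all $e_3$, to the vanishing of the vector field $\BrM{a_E(e_1)}{a_E(e_2)}-a_E(e_1\circnabla e_2)$ itself.
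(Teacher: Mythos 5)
Your proof is correct and follows exactly the route the paper sets up: the first equivalence is an immediate reading of Equation \eqref{Eqn: Cnabla property-4} (the defect of $\CinfM$-linearity of $\cnabla(e_1,e_2)$ is the vector field $\BrM{a_E(e_1)}{a_E(e_2)}-a_E(e_1\circnabla e_2)$ acting as a symbol), and the second is Equation \eqref{Eqn:Metric algebroid D and anchor condition} combined with nondegeneracy of the pairing, which is precisely the observation the paper makes when defining pre-Courant algebroids. The paper states this proposition without writing out a proof, and your argument supplies the intended one, including the small point about passing from $(\cdots)(f)\cdot e_3=0$ for all $e_3$ to the vanishing of the vector field.
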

To establish a key theorem that fully characterizes Courant algebroids through Courant curvature, we introduce an operator $Q:\secE\times\secE\times\secE\rightarrow\secE$ defined by
\begin{eqnarray}\label{Def:Q}
	\Lang{Q(e_1,e_2,e_3)}{t}
	:=\Lang{\cnabla (e_1,t)e_2}{e_3}
	+\Lang{\cnabla (e_2,t)e_3}{e_1}
	+\Lang{\cnabla (e_3,t)e_1}{e_2},
\end{eqnarray}
where $e_1,e_2,e_3,t\in\secE$. The $\CinfM$-linearity of this relation with respect to the $t$-argument, as derived from Equation \eqref{Eqn: Cnabla property-3}, ensures that $Q(e_1,e_2,e_3)$ is well-defined.
\begin{Thm}\label{courant algebroid curvature}Let $\big(E$, $\lang{\tobefilledin}{\tobefilledin}$, $\circnabla$, $a_E\big)$
	be a metric algebroid.   If $\cnabla$ is $\CinfM$-linear with respect to its third argument, and the following identity holds:
	\begin{eqnarray}\label{Eqn: pmBI of pre-Courant algebroid}
		&&
		\cnabla (e_1,e_2)e_3
		+\cnabla (e_2,e_3)e_1
		+\cnabla (e_3,e_1)e_2
		\nonumber\\&&\quad
		+\nabla_{(\Delta_{e_3}e_2)}e_1
		+\nabla_{(\Delta_{e_1}e_3)}e_2
		+\nabla_{(\Delta_{e_2}e_1)}e_3
		\nonumber\\&&\quad\qquad
		+Q(e_1,e_2,e_3)
		=0,
	\end{eqnarray}
	for all $e,e_1,e_2,e_3\in\secE, f\in\CinfM$,
	then $\big(E$, $\lang{\tobefilledin}{\tobefilledin}$, $\circnabla$, $a_E\big)$
	forms a Courant algebroid.
\end{Thm}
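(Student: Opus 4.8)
The plan is to show that the hypotheses force the Leibniz (left-Loday) identity for $\circnabla$, which is the only axiom missing from the metric algebroid structure we already possess. The natural strategy is to pair everything against an arbitrary section $t \in \secE$: since the pseudo-metric is non-degenerate, it suffices to prove
\[
\Lang{e_1\circnabla(e_2\circnabla e_3)}{t}
= \Lang{(e_1\circnabla e_2)\circnabla e_3}{t}
+ \Lang{e_2\circnabla(e_1\circnabla e_3)}{t}
\]
for all $e_1, e_2, e_3, t \in \secE$. First I would take the expression
\[
J(e_1,e_2,e_3):=e_1\circnabla(e_2\circnabla e_3)-(e_1\circnabla e_2)\circnabla e_3-e_2\circnabla(e_1\circnabla e_3),
\]
the Jacobiator (Leibnizator) of the Dorfman bracket, and rewrite it entirely in terms of $\nabla$ and $\Delta$ using \eqref{Def: Dorfman bracket in brief of metric algebroid}, together with the Leibniz-type identities for $\Delta$ and the compatibility \eqref{Eqn:compatible condition of pseudo-metric and metric connection} of $\nabla$ with the metric. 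The goal of this expansion is to collect the ``second-order'' terms into Courant curvatures $\cnabla$: schematically, the terms $\nabla_{e_1}\nabla_{e_2}e_3 - \nabla_{\nabla_{e_1}e_2}e_3 - \ldots$ reorganize, after antisymmetrization, into $\cnabla(e_1,e_2)e_3$ plus cyclic permutations, plus the $\nabla_{(\Delta_{\bullet}\bullet)}$ corrections appearing in \eqref{Eqn: pmBI of pre-Courant algebroid}, plus a residual collection of terms involving $\Delta$ applied to things.

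The residual $\Delta$-terms are precisely what the operator $Q$ from \eqref{Def:Q} is designed to absorb. Here I would use the defining relation $\Lang{\Delta_{e_2}e_1}{e_3}=\Lang{\nabla_{e_3}e_1}{e_2}$ repeatedly to convert pairings of the form $\Lang{\Delta_{\bullet}(\nabla_{\bullet}\bullet)}{t}$ and $\Lang{\nabla_t(\Delta_{\bullet}\bullet)}{\bullet}$ back into expressions in $\cnabla(\bullet,t)\bullet$, thereby matching them against $\Lang{Q(e_1,e_2,e_3)}{t}$. The key mechanism is that $\cnabla$ is $\CinfM$-linear in its third slot (by hypothesis), so $Q$ is well-defined, and pairing against $t$ turns every occurrence of $\Delta$ into an occurrence of $\nabla_t$, which is exactly the form in which $\cnabla(\cdot, t)$ naturally arises. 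Once all three groups — the cyclic sum of $\cnabla(e_i,e_{i+1})e_{i+2}$, the cyclic sum of $\nabla_{(\Delta_{e_{i+2}}e_{i+1})}e_i$, and $Q(e_1,e_2,e_3)$ — have been identified inside $\Lang{J(e_1,e_2,e_3)}{t}$, the hypothesis \eqref{Eqn: pmBI of pre-Courant algebroid} gives $\Lang{J(e_1,e_2,e_3)}{t}=0$, and non-degeneracy yields $J=0$, i.e. the Leibniz identity.

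The main obstacle I anticipate is the bookkeeping in the expansion of $\Lang{J(e_1,e_2,e_3)}{t}$: there are many terms (each $\circnabla$ contributes three, so the nested brackets produce on the order of two dozen pieces before simplification), and success depends on the cancellations being organized around the three structurally distinct groupings above. In particular one must be careful that the $\Delta$-identities are used consistently — for instance $\D\Lang{e_1}{e_2}=\Delta_{e_1}e_2+\Delta_{e_2}e_1$ and the derivation property of $a_E$ over the metric — so that no spurious $\D$-terms survive. I would also need to double-check that the hypothesis that $\cnabla$ is $\CinfM$-linear in the third argument (equivalently, by Proposition \ref{Prop:preCourantCondition}, that $\big(E,\lang{\tobefilledin}{\tobefilledin},\circnabla,a_E\big)$ is already a pre-Courant algebroid, so $a_E$ is a bracket morphism and $\D(f)\circnabla e = 0$) is invoked exactly where needed — namely to discard the anchor-anomaly terms of \eqref{Eqn: Cnabla property-4} and to legitimize the definition of $Q$ — rather than being used implicitly elsewhere. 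Modulo this computational discipline, the theorem reduces to the identity \eqref{Eqn: pmBI of pre-Courant algebroid} essentially by construction of $\cnabla$ and $Q$.
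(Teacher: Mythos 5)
Your plan coincides with the paper's proof: the paper first establishes a ``metric-Bianchi identity'' valid in any metric algebroid, which expresses the Leibnizator $e_1\circnabla(e_2\circnabla e_3)-(e_1\circnabla e_2)\circnabla e_3-e_2\circnabla(e_1\circnabla e_3)$ as the left-hand side of \eqref{Eqn: pmBI of pre-Courant algebroid} plus the single leftover term $(\D\lang{e_1}{e_3})\circnabla e_2$, obtained exactly as you describe by pairing against an arbitrary $t$ and reorganizing into the cyclic $\cnabla$-sum, the cyclic $\nabla_{(\Delta_{\bullet}\bullet)}$-sum, and $Q$. The $\CinfM$-linearity hypothesis then kills that leftover term via Proposition \ref{Prop:preCourantCondition} (i.e.\ $\D(f)\circnabla e=0$), which is precisely where you anticipate it must be invoked, so your proposal is correct and follows the paper's route.
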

 We call \eqref{Eqn: pmBI of pre-Courant algebroid} the \textbf{Courant-Bianchi identity} (of the Courant algebroid  $(E$, $\lang{\tobefilledin}{\tobefilledin}$, $\circnabla$, $a_E))$.

The proof of Theorem \ref{courant algebroid curvature}  will follow immediately
from Equation \eqref{Eqn: Cnabla property-4} and the proposition below. 
\begin{prop}
The following equality holds:
\begin{eqnarray}\label{Eqn: mBI of metric algebroid}
&&
\cnabla (e_1,e_2)e_3
+\cnabla (e_2,e_3)e_1
+\cnabla (e_3,e_1)e_2 +\nabla_{(\Delta_{e_3}e_2)}e_1
\nonumber\\&&\quad
+\nabla_{(\Delta_{e_1}e_3)}e_2
+\nabla_{(\Delta_{e_2}e_1)}e_3+Q(e_1,e_2,e_3)
+(\D\lang{e_1}{e_3})\circnabla  e_2
\nonumber\\&\hspace*{-0.5em}=\hspace*{-0.5em}&
e_1\circnabla (e_2\circnabla  e_3)
-(e_1\circnabla  e_2)\circnabla  e_3
-e_2\circnabla (e_1\circnabla  e_3),
\end{eqnarray}
for all $e_1,e_2,e_3\in\secE$.
\end{prop}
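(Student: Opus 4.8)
The plan is to prove the identity \eqref{Eqn: mBI of metric algebroid} by a direct computation: I will expand both sides using only the definition \eqref{Def: Dorfman bracket in brief of metric algebroid} of $\circnabla$, the definition \eqref{Def: newcurvature-CA} of $\cnabla$, the definition \eqref{Def:Q} of $Q$, the formula \eqref{Eqt:Delta} defining $\Delta$, and the three bookkeeping identities for $\Delta$ and $\D$ recorded just before \eqref{Def: Dorfman bracket in brief of metric algebroid}, together with the metric-compatibility \eqref{Eqn:compatible condition of pseudo-metric and metric connection}. Since every term in the claimed equality lies in $\secE$ and the pseudo-metric is nondegenerate, it suffices to pair everything against an arbitrary test section $t\in\secE$ and verify the resulting scalar identity in $\CinfM$.

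The key steps, in order, are as follows. First I would expand the right-hand side: write $e_2\circnabla e_3=\nabla_{e_2}e_3-\nabla_{e_3}e_2+\Delta_{e_3}e_2$ and similarly for the other inner brackets, then apply $e_1\circnabla(-)$, $(-)\circnabla e_3$, $e_2\circnabla(-)$ using \eqref{Def: Dorfman bracket in brief of metric algebroid} again; this produces a large sum of terms of three shapes — ``$\nabla\nabla$'' terms, ``$\nabla(\Delta\cdots)$'' and ``$\Delta(\nabla\cdots)$'' terms, and ``$\Delta(\Delta\cdots)$'' terms. Second, I would collect the pure $\nabla\nabla$ terms: by definition \eqref{Def: newcurvature-CA}, $\cnabla(e_i,e_j)e_k$ supplies exactly $\nabla_{e_i}\nabla_{e_j}e_k-\nabla_{e_j}\nabla_{e_i}e_k$ minus a $\nabla_{(\text{bracket})}$ correction, so the cyclic sum $\sum\cnabla(e_i,e_j)e_k$ on the left-hand side should absorb the six second-order terms on the right together with some of the $\nabla_{(\nabla\cdots)}$ and $\nabla_{(\Delta\cdots)}$ corrections; the leftover $\nabla_{(\Delta\cdots)}$ corrections are designed to be cancelled by the explicit terms $\nabla_{(\Delta_{e_3}e_2)}e_1+\nabla_{(\Delta_{e_1}e_3)}e_2+\nabla_{(\Delta_{e_2}e_1)}e_3$ on the left. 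Third, I would handle the remaining first-order/zeroth-order debris: pair the surviving $\Delta$-terms and the $(\D\lang{e_1}{e_3})\circnabla e_2$ term against $t$, convert every $\Delta$ and every $\D$ back into pairings via \eqref{Eqt:Delta} and $\lang{\D f}{e}=a_E(e)(f)$, and use metric compatibility \eqref{Eqn:compatible condition of pseudo-metric and metric connection} to rewrite the $a_E$-derivative terms; what remains should match precisely $\Lang{Q(e_1,e_2,e_3)}{t}$ as defined in \eqref{Def:Q}, up to the skew-symmetry identity $\cnabla(e,e')+\cnabla(e',e)=-\nabla_{\D\lang{e}{e'}}$ and the $\CinfM$-linearity in the third slot that we are allowed to assume.

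The main obstacle is purely organizational rather than conceptual: the expansion generates on the order of forty or fifty terms, and the cancellations rely on repeatedly swapping between the ``operator'' form \eqref{Def: Dorfman bracket in brief of metric algebroid} and the ``paired'' form \eqref{Def: Dorfman bracket of metric algebroid} of $\circnabla$, and on using \eqref{Eqt:Delta} in both directions; keeping track of which $\cnabla$ and $Q$ carry their arguments in which cyclic order, and invoking metric compatibility at exactly the right moments to kill the anchor-derivative terms, is where an error is most likely to creep in. Once \eqref{Eqn: mBI of metric algebroid} is established, Theorem \ref{courant algebroid curvature} follows at once: under the hypothesis that $\cnabla$ is $\CinfM$-linear in its third argument, Proposition \ref{Prop:preCourantCondition} gives $\D(f)\circnabla e=0$, so the term $(\D\lang{e_1}{e_3})\circnabla e_2$ vanishes; then the Courant-Bianchi identity \eqref{Eqn: pmBI of pre-Courant algebroid} is exactly the statement that the left-hand side of \eqref{Eqn: mBI of metric algebroid} is zero, whence the right-hand side — the Jacobiator of $\circnabla$ — vanishes, i.e.\ the Leibniz identity holds and $\big(E,\lang{\tobefilledin}{\tobefilledin},\circnabla,a_E\big)$ is a Courant algebroid.
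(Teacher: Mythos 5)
Your plan is essentially the paper's own approach: both verify the identity by pairing against an arbitrary test section $t$, expanding the Jacobiator via the definitions of $\circnabla$, $\Delta$ and $\cnabla$, and matching the debris with $Q$. The only organizational difference is that the paper first converts $-(e_1\circnabla e_2)\circnabla e_3$ and $-e_2\circnabla(e_1\circnabla e_3)$ into a cyclic sum plus corrections using $e\circnabla h+h\circnabla e=\D\lang{e}{h}$ (its Equations \eqref{Eqn: mBI-technology-2}--\eqref{Eqn: mBI-technology-3}), so that only one expansion of $e_1\circnabla(e_2\circnabla e_3)$ need ever be carried out; this is a labor-saving device, not a different idea, and your term-by-term expansion would reach the same endpoint.

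One remark in your plan must be corrected, however: you say you would invoke ``the $\CinfM$-linearity in the third slot that we are allowed to assume.'' For this Proposition you are \emph{not} allowed to assume it --- that hypothesis belongs only to Theorem \ref{courant algebroid curvature}, and in a general metric algebroid $\cnabla(e_1,e_2)(fe_3)$ picks up the anchor defect term recorded in \eqref{Eqn: Cnabla property-4}. Fortunately a correct execution never needs it: the only linearity required is in the $t$-argument of $\cnabla(e_i,t)$, i.e.\ in the \emph{second} slot, which is \eqref{Eqn: Cnabla property-3} and holds unconditionally; this is exactly what makes $Q$ in \eqref{Def:Q} well defined. Make sure your matching of the leftover terms with $\Lang{Q(e_1,e_2,e_3)}{t}$ uses only that, and the argument goes through.
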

We call \eqref{Eqn: mBI of metric algebroid} the \textbf{metric-Bianchi identity} $\big($of the metric algebroid $\big(E$, $\lang{\tobefilledin}{\tobefilledin}$, $\circnabla$, $a_E\big)$$\big)$.

{\begin{proof}

For all $e_1,e_2,e_3\in\secE$, by Equations \eqref{Eqn:compatible condition of pseudo-metric and metric connection} and \eqref{Def: Dorfman bracket of metric algebroid},
we have
\begin{eqnarray*}
\lang{e_1\circnabla (e_2\circnabla  e_3)}{t}
&\hspace*{-0.5em}=\hspace*{-0.5em}&\lang{\nabla_{e_1}\nabla_{e_2}e_3}{t}
-\lang{\nabla_{e_1}\nabla_{e_3}e_2}{t}
-\lang{\nabla_{(e_2\circnabla  e_3)}e_1}{t}
\\&&\quad
+\lang{\nabla_te_1}{\nabla_{e_2}e_3}
-\lang{\nabla_te_1}{\nabla_{e_3}e_2}
+\lang{\nabla_te_2}{\nabla_{e_1}e_3}
\\&&\quad\qquad
+\lang{\nabla_{e_1}\nabla_te_2}{e_3}
-\lang{\nabla_{(\nabla_{e_1}t)}e_2}{e_3}
+\lang{\nabla_{(\nabla_te_1)}e_2}{e_3}.
\end{eqnarray*}
By rotation of arguments, we   derive the following identity:
\begin{eqnarray}\label{Eqn: mBI-technology-1}
e_1\circnabla (e_2\circnabla  e_3)+c.p.(e_1,e_2,e_3)
&\hspace*{-0.5em}=\hspace*{-0.5em}&\cnabla (e_1,e_2)e_3
+\nabla_{(\Delta_{e_3}e_2)}e_1
+\D\lang{e_1}{\nabla_{e_2}e_3}
+c.p.(e_1,e_2,e_3)
\nonumber\\&&\quad
+Q(e_1,e_2,e_3).
\end{eqnarray}
Using $e_1\circnabla  e_2+e_2\circnabla  e_1=\D\lang{e_1}{e_2}$,  we have two identities:
\begin{eqnarray}
\label{Eqn: mBI-technology-2}
-(e_1\circnabla  e_2)\circnabla  e_3
&\hspace*{-0.5em}=\hspace*{-0.5em}&
e_3\circnabla (e_1\circnabla  e_2)
-\D\lang{e_1\circnabla  e_2}{e_3}
,
\\
\label{Eqn: mBI-technology-3}
-e_2\circnabla (e_1\circnabla  e_3)
&\hspace*{-0.5em}=\hspace*{-0.5em}&
e_2\circnabla (e_3\circnabla  e_1)
-e_2\circnabla \big(\D\lang{e_1}{e_3}\big).
\end{eqnarray}
We also need  the following relation:
\begin{eqnarray}\label{Eqn: mBI-technology-4}
a_E(e_2\circnabla  t)\lang{e_1}{e_3}
&\hspace*{-0.5em}=\hspace*{-0.5em}&
\Lang{e_2\circnabla  t}{\D\lang{e_1}{e_3}}
\nonumber\\
&\hspace*{-0.5em}=\hspace*{-0.5em}&
\Lang{\nabla_{e_2}t-\nabla_te_2}{\D\lang{e_1}{e_3}}
+\Lang{\nabla_{\D\lang{e_1}{e_3}}e_2}{t}
\nonumber\\
&\hspace*{-0.5em}=\hspace*{-0.5em}&
a_E(\nabla_{e_2}t)\lang{e_1}{e_3}
-a_E(\nabla_te_2)\lang{e_1}{e_3}
+\Lang{\nabla_{\D\lang{e_1}{e_3}}e_2}{t}.
\end{eqnarray}

Then, we can derive the following equality: 
\begin{eqnarray}\label{Eqn: mBI-technology-5}
&&\Lang{-\D\lang{e_1\circnabla  e_2}{e_3}
-e_2\circnabla \big(\D\lang{e_1}{e_3}\big)}{t}
\nonumber\\&\hspace*{-0.5em}=\hspace*{-0.5em}&
-a_E(t)\big(
\lang{\nabla_{e_1}e_2}{e_3}
-\lang{\nabla_{e_2}e_1}{e_3}
+\lang{\nabla_{e_3}e_1}{e_2}
\big)
\nonumber\\&&\quad
-\lang{\nabla_{e_2}(\D\lang{e_1}{e_3})}{t}
+\lang{\nabla_{\D\lang{e_1}{e_3}}e_2}{t}
-\lang{\nabla_te_2}{\D\lang{e_1}{e_3}}
\nonumber\\&\hspace*{-0.5em}=\hspace*{-0.5em}&
-a_E(t)\big(
\lang{\nabla_{e_1}e_2}{e_3}
+\lang{\nabla_{e_2}e_3}{e_1}
+\lang{\nabla_{e_3}e_1}{e_2}
\big)+a_E(\nabla_{e_2}t)\lang{e_1}{e_3}
\nonumber\\&&\quad
-a_E(\nabla_te_2)\lang{e_1}{e_3}
+\lang{\nabla_{\D\lang{e_1}{e_3}}e_2}{t}
-a_E(e_2)(a_E(t)\lang{e_1}{e_3})
+a_E(t)(a_E(e_2)\lang{e_1}{e_3})
\nonumber\\
&\equalbyreason{\eqref{Eqn: mBI-technology-4}}&
-a_E(t)\big(
\lang{e_1}{\nabla_{e_2}e_3}
+\lang{e_2}{\nabla_{e_3}e_1}
+\lang{e_3}{\nabla_{e_1}e_2}
\big)
+\big(
a_E(e_2\circnabla  t)
-\BrM{a_E(e_2)}{a_E(t)}
\big)\lang{e_1}{e_3}.
\end{eqnarray}
Thus, we have
\begin{eqnarray*}
&&\Lang{e_1\circnabla (e_2\circnabla  e_3)
-(e_1\circnabla  e_2)\circnabla  e_3
-e_2\circnabla (e_1\circnabla  e_3)}{t}
\\&\equalbyreason{\eqref{Eqn: mBI-technology-2}\eqref{Eqn: mBI-technology-3}\eqref{Eqn: mBI-technology-5}}&
\Lang{e_1\circnabla (e_2\circnabla  e_3)}{t}
+\Lang{e_2\circnabla (e_3\circnabla  e_1)}{t}
+\Lang{e_3\circnabla (e_1\circnabla  e_2)}{t}
\\&&\quad
-a_E(t)\big(
\lang{e_1}{\nabla_{e_2}e_3}
+\lang{e_2}{\nabla_{e_3}e_1}
+\lang{e_3}{\nabla_{e_1}e_2}
\big)
+\big(
a_E(e_2\circnabla  t)
-\BrM{a_E(e_2)}{a_E(t)}
\big)\lang{e_1}{e_3}
\\&\equalbyreason{\eqref{Eqn: mBI-technology-1}\eqref{Eqn:Metric algebroid D and anchor condition}}&\Lang{
\cnabla (e_1,e_2)e_3
+\cnabla (e_2,e_3)e_1
+\cnabla (e_3,e_1)e_2+\nabla_{(\Delta_{e_3}e_2)}e_1
\\&&\quad
+\nabla_{(\Delta_{e_1}e_3)}e_2
+\nabla_{(\Delta_{e_2}e_1)}e_3
+Q(e_1,e_2,e_3)
+\big(\D\lang{e_1}{e_3}\big)\circnabla  e_2
}{t}
.
\end{eqnarray*}
This completes the proof.
\end{proof}
}
\subsection{Three typical examples of metric algebroids}\label{three typical examples}
We will examine three types of metric bundles:  $E=\AAstar$ (where $A$ is an arbitrary vector bundle), $\MMstar$ (where $M$ is an arbitrary manifold), and $\MM$, and establish the corresponding metric, pre-Courant, and Courant algebroid structures.
\vskip 0.1cm
\textbf{Case 1:} $E=\AAstar$.\label{subsubsectionAAstar}

 Let $(A,\BrA{\tobefilledin}{\tobefilledin},\rhoA)$ and
$(A^*,\BrAstar{\tobefilledin}{\tobefilledin},a_{A^*})$ be two pure algebroids arising from two  torsion free connections $\nnAA:\secA\times\secA\rightarrow\secA$ and
$\nnAstarAstar:\secAstar\times\secAstar\rightarrow\secAstar$, respectively.
The standard metric on $A \oplus A^*$ is defined by the standard pairing \eqref{Eqn:standard pairing}. The anchor map $a_E: \AAstar\rightarrow TM$ is defined  by
$$a_E(x+\xi)=\rhoA(x)+a_{A^*}(\xi),$$
for all $x+\xi\in\secAAstar$.

These structures  establish an anchored metric bundle $(\AAstar,\lang{\tobefilledin}{\tobefilledin},a_E)$.
By $\nnAA$ and $\nnAstarAstar$, we obtain two induced connections
$\nnAAstar:\secA\times\secAstar\rightarrow\secAstar$ and
$\nnAstarA:\secAstar\times\secA\rightarrow\secA$ given respectively by
\begin{eqnarray*}
\Lang{\nnAAstar_x\xi}{y}
&\hspace*{-0.5em}=\hspace*{-0.5em}&\rhoA(x)\lang{\xi}{y}
-\Lang{\xi}{\nnAA_xy}
,
\\\mbox{and }\quad 
\Lang{\nnAstarA_\xi x}{\eta}
&\hspace*{-0.5em}=\hspace*{-0.5em}&a_{A^*}(\xi)\lang{x}{\eta}
-\Lang{x}{\nnAstarAstar_\xi\eta},
\qquad\forall x,y\in\secA,\xi,\eta\in\secAstar.
\end{eqnarray*}
Thus, we have a natural operator $\nabla:\secAAstar\times\secAAstar\rightarrow\secAAstar$ defined by
\begin{eqnarray*}
\nabla_{x+\xi}(y+\eta)
:=\nnAA_xy+\nnAAstar_x\eta+\nnAstarA_\xi y+\nnAstarAstar_\xi\eta,
\end{eqnarray*}
and the associated Dorfman bracket $\circnabla:\secAAstar\times\secAAstar\rightarrow\secAAstar$ given by
\begin{eqnarray}\label{Def: Dorfman bracket of AAstar}
(x+\xi)\circnabla (y+\eta)
:=\nabla_{x+\xi}(y+\eta)-\nabla_{y+\eta}(x+\xi)+\Delta_{y+\eta}(x+\xi),
\end{eqnarray}
where $\Delta:\secAAstar\times\secAAstar\rightarrow\secAAstar$ (according to Equation \eqref{Eqt:Delta}) is explicitly expressed by
\begin{eqnarray*}
\Lang{\Delta_{y+\eta}(x+\xi)}{z+\gamma}
=\Lang{\nabla_{z+\gamma}(x+\xi)}{y+\eta}
=\Lang{\nnAA_zx+\nnAstarA_\gamma x}{\eta}
+\Lang{\nnAAstar_z\xi+\nnAstarAstar_\gamma\xi}{y},
 \end{eqnarray*}
for all $ x+\xi,y+\eta,z+\gamma\in\secAAstar$. 

Further,  the operator $\circnabla$ given by Equation \eqref{Def: Dorfman bracket of AAstar} can be expressed alternatively:
\begin{eqnarray}\label{Def: Dorfman bracket of AAstar in detail}
(x+\xi)\circnabla (y+\eta)
=\big(\BrA{x}{y}
+L_\xi y
-\iota_\eta (\dAstar x)\big)
+\big(\BrAstar{\xi}{\eta}
+L_x\eta
-\iota_y(\dA \xi)\big).
\end{eqnarray}
In conclusion, $(\AAstar,\lang{\tobefilledin}{\tobefilledin},\circnabla,a_E)$  is a   metric algebroid.  One may ask under what conditions the datum $(\AAstar,\lang{\tobefilledin}{\tobefilledin},\circnabla,a_E)$ will become a pre-Courant  algebroid. Indeed, the answer is merely some system of equations which is not so interesting.
Moreover, suppose that $(A,A^*)$ is a Lie bialgebroid in the sense of \cite{MX}, then $\AAstar$ is a Courant algebroid and  the canonical Dorfman bracket of $\AAstar$ is exactly defined as above (see  \cites{Courant}).

\textbf{Case 2: $E=\MMstar$. } 

Let $A=TM$ be the tangent Lie algebroid and $A^*=T^*M$ equipped with the trivial Lie algebroid structure. The anchor map $a_E: \secMMstar \rightarrow \secM$ is defined by:
\begin{eqnarray*}
	a_E (X+\alpha)=X, \qquad\forall X+\alpha\in\secMMstar.
\end{eqnarray*}
Moreover, the  Courant bracket \eqref{Def: Dorfman bracket of AAstar in detail} transforms into the following expression:
\begin{eqnarray}\label{Def: Dorfman bracket of MMstar}
(X+\alpha)\circnabla(Y+\beta)
=
\BrM{X}{Y}
+L_{X}\beta
-\iota_{Y}\dd\alpha
, \end{eqnarray}
where $\dd:\Gamma(\wedge^\bullet T^*M)\rightarrow\Gamma(\wedge^{\bullet+1}T^*M)$ is the de Rham differential. Indeed, the quadruple $\big(\MMstar$,  $\lang{\tobefilledin}{\tobefilledin}$, $\circnabla$, $a_E\big)$ is the first example of   Courant algebroids \cite{Courant}.

The standard Courant bracket $\circnabla$ in \eqref{Def: Dorfman bracket of MMstar} can be altered through a twisting operation and a differential 3-form $\omega \in \Omega^3(M) = \Gamma(\wedge^3 T^*M)$. The modified Courant bracket, denoted $\overset{\omega}{\circ}$, is defined as
\begin{eqnarray}\label{Def: Dorfman bracket of MMstar twisted by omega}
	(X+\alpha)\overset{\omega}{\circ}(Y+\beta)
	:=
	(X+\alpha)\circnabla(Y+\beta)
	+\omega(X,Y,\tobefilledin).
\end{eqnarray}
Indeed, we can rewrite $\overset{\omega}{\circ}$ via a particular type of metric connections on $(\MMstar, \lang{\tobefilledin}{\tobefilledin},a_E )$  as follows:
\begin{enumerate}
	\item Consider an arbitrary torsion-free connection $\nabla^{TM}: \Gamma(TM) \times \Gamma(TM) \rightarrow \Gamma(TM)$. This connection induces a corresponding connection $\nabla^{T^*M}: \Gamma(TM) \times \Gamma(T^*M) \rightarrow \Gamma(T^*M)$ on the cotangent bundle $T^*M$, defined by the following equation: $$\langle \nabla^{T^*M}_X \alpha, Y \rangle = X \langle \alpha, Y \rangle - \langle \alpha, \nabla^{TM}_X Y \rangle, \quad \forall X, Y \in \Gamma(TM), \alpha \in \Gamma(T^*M).$$
	
	\item Define an associated connection $\nabla:\secMMstar\times\secMMstar\rightarrow\secMMstar$ by 
	\begin{eqnarray}\label{Def: connection of MMstar}
		\nabla_{X+\alpha}(Y+\beta)
		=
		\nM_XY
		+\nMstar_X\beta
		,\qquad\forall X+\alpha,Y+\beta\in\secMMstar.
	\end{eqnarray}
	Then  $\nabla$ is a metric connection on $\secMMstar$.

	\item Define an $\omega$-twisted connection $\nabla^{\omega}:\secMMstar\times\secMMstar\rightarrow\secMMstar$ by
	\begin{eqnarray*}\label{Eqn:8-8-1}
	\nabla^{\omega}_{X+\alpha}(Y+\beta)
	= \nabla_{X+\alpha}(Y+\beta)+\frac{1}{3}\omega(X,Y)
	,\qquad\forall X+\alpha,Y+\beta\in\secMMstar,
	\end{eqnarray*}
	where $\nabla$ is the metric connection defined in Equation  \eqref{Def: connection of MMstar}.
	Then  $\nabla^{\omega}$ is also a metric connection on $\secMMstar$.
	\item It can be directly verified that the Courant bracket $\circnabla$ from Equation \eqref{Def: Dorfman bracket of MMstar} is induced by the connection $\nabla$.  Similarly, the $\omega$-twisted Courant bracket $\overset{\omega}{\circ}$ defined in Equation \eqref{Def: Dorfman bracket of MMstar twisted by omega} is induced by the modified connection $\nabla^{\omega}$.
\end{enumerate}
\textbf{Case 3: $E=\MM$. } 

Let $M$ be a smooth manifold and $g:TM\otimes TM\rightarrow \R$ a  pseudo-Riemannian metric. Thus, one can define an induced pseudo-metric
$\lang{\tobefilledin}{\tobefilledin}:\big(\MM\big)\otimes \big(\MM\big)\rightarrow\mathbb{R}$
by
\begin{eqnarray*}
	\langg{(X_1,X_2)}{(Y_1,Y_2)}
	:=\langr{X_1}{Y_2}
	+\langr{X_2}{Y_1},
	\quad\forall (X_1,X_2),(Y_1,Y_2)\in\secMM.
\end{eqnarray*}
Define the anchor map $\rho:\secMM\rightarrow\secM$ by
$\rho (X_1,X_2):=X_1+X_2$, for all $(X_1,X_2)\in\secMM$.
So, the triple $(\MM$, $\lang{\tobefilledin}{\tobefilledin}$, $\rho )$ is an anchored metric bundle.

Since we have an isomorphism of vector bundles   $g^\sharp:TM\rightarrow T^*M$ given by
$
\Lang{g^\sharp(X)}{Y}
:=g(X,Y)$,
for all $X,Y\in\secM$, one can identify $T^*M$ with $TM$ and endow $T^*M$ with a Lie algebroid structure isomorphic to $TM$.  Then consider the particular instance of $A=TM$ in \textbf{Case 1}, and we see that $\MM$   $\cong \MMstar$ has a metric algebroid structure. However,   \textit{it can never become a pre-Courant algebroid}. The reason is as follows.

We can first take the  Levi-Civita connection $\nM:\secM\times\secM\rightarrow\secM$
arising from $g$; hence $\nM$ is torsion-free.  Then define an associative operator
$\DeltaTM:\secM\times\secM\rightarrow\secM$ by
$$
\Langr{\DeltaTM_{w}u}{v}
:=\Langr{\nM_{v}u}{w}
,\quad
\forall u,v,w\in\secM . $$

In the meantime, we have the associated metric connection in $\MM$, i.e., $\nabla:\secMM\times\secMM\rightarrow\secMM$, given by
\begin{eqnarray*}
	\nabla_{(X_1,X_2)}(Y_1,Y_2)
	:=\big(\nM_{X_1+X_2}Y_1,
	\nM_{X_1+X_2}Y_2\big),
	\qquad\forall (X_1,X_2),(Y_1,Y_2)\in\secMM.
\end{eqnarray*}

As for the Dorfman bracket  $\circnabla :~\secMM\times \secMM\rightarrow\secMM
$
which is generally expressed by
\begin{eqnarray}
	(X_1,X_2)\circnabla  (Y_1,Y_2)
	:=\nabla_{(X_1,X_2)} (Y_1,Y_2)
	-\nabla_{(Y_1,Y_2)} (X_1,X_2)
	+\Delta_{(Y_1,Y_2)}(X_1,X_2),
	\nonumber
\end{eqnarray}
for all $(X_1,X_2),(Y_1,Y_2)\in\secMM$, we can compute the following four special situations:
\begin{eqnarray*}
	(X_1,0)\circnabla  (Y_1,0)
	&=&
	\big(\BrM{X_1}{Y_1},0\big)
	,\\
	(X_1,0)\circnabla  (0,Y_2)
	&=&
	\big(
	-\nM_{Y_2}X_1+\DeltaTM_{Y_2}X_1,
	\nM_{X_1}Y_2+\DeltaTM_{Y_2}X_1
	\big)
	,\\
	(0,X_2)\circnabla  (Y_1,0)
	&=&
	\big(
	\nM_{X_2}Y_1+\DeltaTM_{Y_1}X_2,
	-\nM_{Y_1}X_2+\DeltaTM_{Y_1}X_2
	\big)
	,\\
	(0,X_2)\circnabla  (0,Y_2)
	&=&
	\big(0,\BrM{X_2}{Y_2}\big).
\end{eqnarray*}

In order that the said metric algebroid $\big(\MM$,
$\langg{\tobefilledin}{\tobefilledin}$, $\circnabla$, $\rho\big)$
is a pre-Courant algebroid, we need
$$
\rho((X_1,0)\circnabla (0,Y_2))=\BrM{\rho(X_1,0)}{\rho(0,Y_2)}
$$
which is just the condition $\DeltaTM_{Y_2}X_1=0$, $\forall Y_2,X_1\in \secM$, and hence we must have  $\nM=0$, a contradiction.

\subsection{Construction of  Courant algebroids from  compatible metric $n$-systems}\label{sec 3.3} In this  section, we are about to state our
second main result.  Consider a vector bundle \(E \rightarrow M\) equipped with a pseudo-metric \(\langle \cdot, \cdot \rangle\) on \(E\) and suppose that $\dere^1, \dere^2, \ldots, \dere^n$ are elements of $\Gamma(E)$ that are linearly independent over \(C^\infty(M)\). Furthermore, let \(\{\derZ_1, \derZ_2, \ldots, \derZ_n\} \subset \Gamma(\mathrm{CDO}(E))\) be covariant differential operators on $E$ (which may be linearly dependent over \(C^\infty(M)\)). We consistently represent the symbols of \(\{\derZ_1, \ldots, \derZ_{n}\}\) by the letters \(\{\derz_1, \ldots, \derz_{n}\}\) (where $\derz_i\in \Gamma(TM)$ for each $i$). Assume that the operators \(\derZ_i\), their corresponding symbols \(\derz_i\), and the pseudo-metric \(\langle \cdot, \cdot \rangle\) satisfy the following compatibility condition:
\begin{eqnarray}\label{Eqn: compatible condition of pseudo-metric and CDO}
	\Lang{\derZ_i(e_1)}{e_2}
	+\Lang{e_1}{\derZ_i(e_2)}
	=\derz_i\lang{e_1}{e_2},
	\qquad\forall e_1,e_2\in\secE.
\end{eqnarray}
In this part, we suppose that the rank of the vector bundle $E$ is at least $n$.
\begin{definition}\label{metric n-system on E} 	The datum  $$(\derZ_1,\ldots,\derZ_n;\dere^1,\ldots,\dere^n)$$ described as above  is  called a \textbf{metric $n$-system}.
\end{definition}

Given a metric $n$-system, one can  define the following structure maps on $\Gamma(E)$, which constitute a metric algebroid $\big(E$, $\lang{\tobefilledin}{\tobefilledin}$, $\circnabla$, $a_E\big)$.
\begin{itemize}
\item
The anchor map $a_E: E \rightarrow TM$ is defined by the expression:
\[
a_E(e) := \Lang{e}{\dere^i} \cdot \derz_i, \quad \forall e \in \secE.
\]
The associated operator $\D: \CinfM \rightarrow \secE$ is given by the following formula $$\D(f) := \derz_i(f) \cdot \dere^i,\hspace*{2em}\forall f \in \CinfM.$$
\item
The associated connection $\nabla:\secE\times\secE\rightarrow\secE$ is defined by the expression:
\begin{eqnarray}\label{Def: connection of Courant algebroid in n system}
\nabla_{e_1}e_2
:=\Lang{e_1}{\dere^i}\cdot\derZ_i(e_2),
\qquad\forall e_1,e_2\in\secE.
\end{eqnarray}
 The connection $\nabla$ is a metric connection because it satisfies Equation \eqref{Eqn: compatible condition of pseudo-metric and CDO}.
The associated operator $\Delta:\secE\times\secE\rightarrow\secE$ corresponding to $\nabla$ is given by the following formula
$$\Delta_{e_2}e_1:=\Lang{\derZ_i(e_1)}{e_2}\cdot\dere^i,\hspace*{2em}\forall e_1,e_2\in\secE.$$
\item
The Dorfman bracket
$\circnabla :\secE\times\secE\rightarrow\secE$ induced by $\nabla$ can be expressed by
\begin{eqnarray}
\label{Def: Dorfman bracket of Courant algebroid in n system}
e_1\circnabla e_2
&:=\hspace*{-0.5em}&\nabla_{e_1}e_2
-\nabla_{e_2}e_1
+\Delta_{e_2}e_1
\nonumber\\&=\hspace*{-0.5em}&
\Lang{e_1}{\dere^i}\cdot\derZ_i(e_2)
-\Lang{e_2}{\dere^i}\cdot\derZ_i(e_1)
+\Lang{\derZ_i(e_1)}{e_2}\cdot \dere^i
,\qquad\forall e_1,e_2\in\secE
.
\end{eqnarray}
\end{itemize}
The next step is to investigate the sufficient conditions under which this metric algebroid can be characterized as either
a pre-Courant algebroid or a Courant algebroid.

\begin{prop}\label{pre-Courant algebroid}
Suppose that the metric $n$-system $(\derZ_1,\ldots,\derZ_n;\dere^1,\ldots,\dere^n)$ on $E$ satisfies the following conditions:
\begin{eqnarray}
\label{Eqn: compatible condition 1 of Courant algebroid in n system}
\derZ_i(\dere^j)
&\hspace*{-0.5em}=\hspace*{-0.5em}&C_{ik}^j \dere^k,
\qquad\qquad\text{where}\quad C_{ik}^j\in\CinfM
,
\\
\label{Eqn: compatible condition 2 of pre-Courant algebroid in n system}
\BrCDO{\derz_i}{\derz_j}
&\hspace*{-0.5em}=\hspace*{-0.5em}&
(C_{ji}^k-C_{ij}^k)\derz_k
,
\\
\label{Eqn: compatible condition 3 of Courant algebroid in n system}
\Lang{ \dere^i}{\dere^j}
&\hspace*{-0.5em}=\hspace*{-0.5em}&0,
\end{eqnarray}
for all $i,j\in\{1,2,\ldots,n\}$. Then  the associated metric algebroid $\big(E$, $\lang{\tobefilledin}{\tobefilledin}$, $\circnabla$, $a_E\big)$ is a pre-Courant algebroid.
\end{prop}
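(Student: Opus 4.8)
The plan is to reduce the statement to a single anchor compatibility identity. Since the connection $\nabla$ defined by \eqref{Def: connection of Courant algebroid in n system} satisfies the metric compatibility \eqref{Eqn: compatible condition of pseudo-metric and CDO}, the quadruple $\big(E,\lang{\tobefilledin}{\tobefilledin},\circnabla,a_E\big)$ is already a metric algebroid by the construction in Section \ref{sec 3.1}. By Proposition \ref{Prop:preCourantCondition}, to promote it to a pre-Courant algebroid it is enough to check that $a_E(e_1\circnabla e_2)=\BrM{a_E(e_1)}{a_E(e_2)}$ for all $e_1,e_2\in\secE$, and this is what I would verify by direct computation using the three conditions \eqref{Eqn: compatible condition 1 of Courant algebroid in n system}, \eqref{Eqn: compatible condition 2 of pre-Courant algebroid in n system} and \eqref{Eqn: compatible condition 3 of Courant algebroid in n system}.

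First I would pair the explicit Dorfman bracket \eqref{Def: Dorfman bracket of Courant algebroid in n system} against $\dere^j$: the term $\Lang{\derZ_i(e_1)}{e_2}\cdot\dere^i$ then contributes $\Lang{\derZ_i(e_1)}{e_2}\Lang{\dere^i}{\dere^j}$, which is killed by \eqref{Eqn: compatible condition 3 of Courant algebroid in n system}, leaving $\Lang{e_1\circnabla e_2}{\dere^j}=\Lang{e_1}{\dere^i}\Lang{\derZ_i(e_2)}{\dere^j}-\Lang{e_2}{\dere^i}\Lang{\derZ_i(e_1)}{\dere^j}$. Next, the metric compatibility \eqref{Eqn: compatible condition of pseudo-metric and CDO} gives $\Lang{\derZ_i(e)}{\dere^j}=\derz_i\Lang{e}{\dere^j}-\Lang{e}{\derZ_i(\dere^j)}$, into which I would substitute $\derZ_i(\dere^j)=C_{ik}^j\dere^k$ from \eqref{Eqn: compatible condition 1 of Courant algebroid in n system}. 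Multiplying the resulting expression for $\Lang{e_1\circnabla e_2}{\dere^j}$ by $\derz_j$ and summing over $j$ produces a formula for $a_E(e_1\circnabla e_2)$ that involves only the functions $\Lang{e_1}{\dere^i}$, $\Lang{e_2}{\dere^i}$, the vector fields $\derz_i$, and the structure functions $C_{ik}^j$.

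On the other side, I would expand $\BrM{a_E(e_1)}{a_E(e_2)}=\BrM{\Lang{e_1}{\dere^i}\derz_i}{\Lang{e_2}{\dere^j}\derz_j}$ by the Leibniz rule for the Lie bracket of vector fields; this yields two derivation terms together with $\Lang{e_1}{\dere^i}\Lang{e_2}{\dere^j}\BrM{\derz_i}{\derz_j}$, and for the last summand I would use condition \eqref{Eqn: compatible condition 2 of pre-Courant algebroid in n system}. Comparing the two computations, the derivation terms match immediately, and the terms involving $C_{ik}^j$ match after renaming the dummy summation indices (swapping $i\leftrightarrow k$ in one of them) so that the index contracted with $\derz$ agrees on both sides. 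This establishes the anchor identity, and Proposition \ref{Prop:preCourantCondition} then gives that $\big(E,\lang{\tobefilledin}{\tobefilledin},\circnabla,a_E\big)$ is a pre-Courant algebroid.

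The only real difficulty is the index bookkeeping in the final comparison — keeping track of which summation index is contracted against $\derz$ and using the freedom to rename dummy indices so that the two expansions line up. Conceptually there is nothing subtle, once one observes, via Proposition \ref{Prop:preCourantCondition}, that the anchor identity is the only thing left to check, and that condition \eqref{Eqn: compatible condition 3 of Courant algebroid in n system} is precisely what is needed to discard the $\dere^i$-term of the Dorfman bracket when pairing against $\dere^j$.
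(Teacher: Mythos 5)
Your proposal is correct and follows essentially the same route as the paper: reduce to the single anchor identity $a_E(e_1\circnabla e_2)=\BrM{a_E(e_1)}{a_E(e_2)}$ (which is in fact just the defining condition \eqref{Compat-cond-pre-Courant} of a pre-Courant algebroid, so invoking Proposition \ref{Prop:preCourantCondition} is not even needed), then verify it by expanding both sides, using \eqref{Eqn: compatible condition of pseudo-metric and CDO} together with \eqref{Eqn: compatible condition 1 of Courant algebroid in n system} and \eqref{Eqn: compatible condition 2 of pre-Courant algebroid in n system}, and discarding the $\dere^i$-term of the Dorfman bracket via \eqref{Eqn: compatible condition 3 of Courant algebroid in n system}. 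The index bookkeeping you flag is exactly the content of the paper's displayed computation, and your plan handles it correctly.
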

\begin{proof}
We only need to verify the relation
\begin{eqnarray*}
a_E(e_1\circnabla e_2)-\Br{a_E(e_1)}{a_E(e_2)}=0,
\qquad\forall e_1,e_2\in\secE.
\end{eqnarray*}
In fact,   we have
\begin{eqnarray*}
&&a_E(e_1\circnabla e_2)-\Br{a_E(e_1)}{a_E(e_2)}
\\&\equalbyreason{\eqref{Def: Dorfman bracket of Courant algebroid in n system}}&
\Lang{e_1}{\dere^i}\cdot\Lang{\derZ_i(e_2)}{\dere^j}\cdot\derz_j
-\Lang{e_2}{\dere^i}\cdot\Lang{\derZ_i(e_1)}{\dere^j}\cdot\derz_j
+\Lang{\derZ_i(e_1)}{e_2}\cdot \lang{\dere^i}{\dere^j}\cdot\derz_j
\\&&\quad
-\lang{e_1}{\dere^i}\cdot\lang{e_2}{\dere^j}\cdot\Br{\derz_i}{\derz_j}
-\lang{e_1}{\dere^i}\cdot\derz_i\lang{e_2}{\dere^i}\cdot\derz_j
+\lang{e_2}{\dere^i}\cdot\derz_j\lang{e_1}{\dere^i}\cdot\derz_i
\\&\equalbyreason{\eqref{Eqn: compatible condition of pseudo-metric and CDO}}&
-\Lang{e_1}{\dere^i}\cdot\Lang{e_2}{\derZ_i(\dere^k)}\cdot\derz_k
+\Lang{e_2}{\dere^j}\cdot\Lang{e_1}{\derZ_j(\dere^k)}\cdot\derz_k
\\&&\quad
-\lang{e_1}{\dere^i}\cdot\lang{e_2}{\dere^j}\cdot\Br{\derz_i}{\derz_j}
+\Lang{\derZ_i(e_1)}{e_2}\cdot \lang{\dere^i}{\dere^j}\cdot\derz_j
\\&\equalbyreason{\eqref{Eqn: compatible condition 1 of Courant algebroid in n system}\eqref{Eqn: compatible condition 2 of pre-Courant algebroid in n system}}&
-\Lang{e_1}{\dere^i}\cdot\Lang{e_2}{C_{ij}^k\dere^j}\cdot\derz_k
+\Lang{e_2}{\dere^j}\cdot\Lang{e_1}{C_{ji}^k\dere^i}\cdot\derz_k
\\&&\quad
-\lang{e_1}{\dere^i}\cdot\lang{e_2}{\dere^j}\cdot(C_{ji}^k-C_{ij}^k)\derz_k
+\Lang{\derZ_i(e_1)}{e_2}\cdot \lang{\dere^i}{\dere^j}\cdot\derz_j
\\&\equalbyreason{\eqref{Eqn: compatible condition 3 of Courant algebroid in n system}}&
0.
\end{eqnarray*}
\end{proof}

\begin{prop}\label{Courant algebroid}
Consider the metric $n$-system $(\derZ_1,\ldots,\derZ_n;\dere^1,\ldots,\dere^n)$ on $E$, and suppose that it satisfies   Conditions \eqref{Eqn: compatible condition 1 of Courant algebroid in n system} and \eqref{Eqn: compatible condition 3 of Courant algebroid in n system} of the previous proposition, and  the following condition:
\begin{eqnarray}
\label{Eqn: compatible condition 2 of Courant algebroid in n system}
\BrCDO{\derZ_i}{\derZ_j}
&=&
(C_{ji}^k-C_{ij}^k)\derZ_k
, \end{eqnarray}
for all $i,j\in\{1,2,\ldots,n\}$.
Under these conditions, the following equation holds:
$$\cnabla (e_1,e_2)e_3
=\nabla_{(\Delta_{e_3}e_2)}e_1
=Q(e_1,e_2,e_3)
=(\D\lang{e_1}{e_3})\circnabla  e_2
=0
,\qquad\forall e_1,e_2,e_3\in\secE. $$
Consequently, the quadruple $\big(E$, $\lang{\tobefilledin}{\tobefilledin}$, $\circnabla$, $a_E\big)$ constitutes a Courant algebroid.
\end{prop}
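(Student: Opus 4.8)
The plan is to deduce everything from the already-proved Theorem~\ref{courant algebroid curvature}, by showing that under these hypotheses each of the four operators in the statement vanishes identically. First observe that passing to symbols in \eqref{Eqn: compatible condition 2 of Courant algebroid in n system} recovers \eqref{Eqn: compatible condition 2 of pre-Courant algebroid in n system}, so the hypotheses here contain those of Proposition~\ref{pre-Courant algebroid}; hence the associated metric algebroid $\big(E,\lang{\tobefilledin}{\tobefilledin},\circnabla,a_E\big)$ is in fact a pre-Courant algebroid, and in particular $\D(f)\circnabla e=0$ for all $f\in\CinfM$ and $e\in\secE$. It then remains to check $\cnabla(e_1,e_2)e_3=0$, $\nabla_{(\Delta_{e_3}e_2)}e_1=0$, $Q(e_1,e_2,e_3)=0$, and $(\D\lang{e_1}{e_3})\circnabla e_2=0$; once these hold, $\cnabla$ is trivially $\CinfM$-linear in its third argument, and every summand of the Courant--Bianchi identity \eqref{Eqn: pmBI of pre-Courant algebroid} vanishes, so Theorem~\ref{courant algebroid curvature} gives the Courant algebroid conclusion.

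The heart of the argument is $\cnabla(e_1,e_2)e_3=0$, which I would carry out exactly as in the proof of Proposition~\ref{Lie algebroid}. One expands $\nabla_{e_1}\nabla_{e_2}e_3$ and $\nabla_{e_2}\nabla_{e_1}e_3$ using the Leibniz rule for $\nabla$ together with $a_E(e_1)=\Lang{e_1}{\dere^i}\cdot\derz_i$, and expands $\nabla_{(e_1\circnabla e_2)}e_3=\Lang{e_1\circnabla e_2}{\dere^k}\cdot\derZ_k(e_3)$ by unfolding $e_1\circnabla e_2$ via \eqref{Def: Dorfman bracket of Courant algebroid in n system}: the term carrying $\Lang{\derZ_i(e_1)}{e_2}\cdot\Lang{\dere^i}{\dere^k}$ drops out by \eqref{Eqn: compatible condition 3 of Courant algebroid in n system}, and one rewrites $\Lang{\derZ_i(e_2)}{\dere^k}=\derz_i\Lang{e_2}{\dere^k}-\Lang{e_2}{\derZ_i(\dere^k)}$ using the metric compatibility \eqref{Eqn: compatible condition of pseudo-metric and CDO}. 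The first-order (derivative-of-pairing) terms then cancel against the analogous terms produced by $\nabla_{e_1}\nabla_{e_2}e_3-\nabla_{e_2}\nabla_{e_1}e_3$, leaving
\[
\cnabla(e_1,e_2)e_3=\Lang{e_1}{\dere^i}\cdot\Lang{e_2}{\dere^j}\cdot\BrCDO{\derZ_i}{\derZ_j}(e_3)+\big(\Lang{e_1}{\dere^i}\cdot\Lang{e_2}{\derZ_i(\dere^k)}-\Lang{e_2}{\dere^i}\cdot\Lang{e_1}{\derZ_i(\dere^k)}\big)\cdot\derZ_k(e_3),
\]
which is the verbatim analogue of the displayed identity in Proposition~\ref{Lie algebroid}. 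Substituting $\derZ_i(\dere^j)=C_{ik}^j\dere^k$ from \eqref{Eqn: compatible condition 1 of Courant algebroid in n system} and $\BrCDO{\derZ_i}{\derZ_j}=(C_{ji}^k-C_{ij}^k)\derZ_k$ from \eqref{Eqn: compatible condition 2 of Courant algebroid in n system}, and relabelling dummy indices, all three terms cancel.

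The remaining three vanishings are then quick. Since $\Delta_{e_3}e_2=\Lang{\derZ_i(e_2)}{e_3}\cdot\dere^i$ lies in the span of the $\dere^i$, formula \eqref{Def: connection of Courant algebroid in n system} gives $\nabla_{(\Delta_{e_3}e_2)}e_1=\Lang{\derZ_i(e_2)}{e_3}\cdot\Lang{\dere^i}{\dere^k}\cdot\derZ_k(e_1)=0$ by \eqref{Eqn: compatible condition 3 of Courant algebroid in n system}; the operator $Q(e_1,e_2,e_3)$ vanishes at once from its defining relation \eqref{Def:Q} once $\cnabla$ is known to be identically zero; and $(\D\lang{e_1}{e_3})\circnabla e_2=0$ follows from the pre-Courant property noted in the first paragraph (or, if one prefers, from a short direct computation using $\Lang{\D(f)}{\dere^i}=\derz_j(f)\cdot\Lang{\dere^j}{\dere^i}=0$ and \eqref{Eqn: compatible condition 1 of Courant algebroid in n system}, \eqref{Eqn: compatible condition 2 of Courant algebroid in n system}). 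I expect the only step requiring genuine bookkeeping to be the cancellation in the expansion of $\cnabla(e_1,e_2)e_3$: one must invoke \eqref{Eqn: compatible condition of pseudo-metric and CDO} at precisely the moment that makes the $\derZ_i(\dere^k)$-contributions line up with those later absorbed by \eqref{Eqn: compatible condition 1 of Courant algebroid in n system}, while everything else is routine index manipulation.
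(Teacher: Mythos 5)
Your proposal is correct and follows essentially the same route as the paper: the central computation showing $\cnabla(e_1,e_2)e_3=0$ is carried out exactly as in the paper (and as in Proposition \ref{Lie algebroid}), as are the vanishings of $\nabla_{(\Delta_{e_3}e_2)}e_1$ via \eqref{Eqn: compatible condition 3 of Courant algebroid in n system} and of $Q$ via its defining relation. The only (harmless) deviations are that you obtain $(\D\lang{e_1}{e_3})\circnabla e_2=0$ by first invoking Proposition \ref{pre-Courant algebroid} (correctly noting that \eqref{Eqn: compatible condition 2 of Courant algebroid in n system} yields \eqref{Eqn: compatible condition 2 of pre-Courant algebroid in n system} at the symbol level) rather than by the paper's direct index computation, and that you conclude through Theorem \ref{courant algebroid curvature} instead of citing the metric-Bianchi identity \eqref{Eqn: mBI of metric algebroid} directly.
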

 {
\begin{proof}
	Now, we verify that $\big(E$, $\lang{\tobefilledin}{\tobefilledin}$, $\circnabla$, $a_E\big)$ is a Courant algebroid.
	\begin{enumerate}
		\item[$\bullet$]To see $\cnabla (e_1,e_2)e_3$=0, we use Equations \eqref{Eqn: compatible condition 1 of Courant algebroid in n system} and \eqref{Eqn: compatible condition 2 of Courant algebroid in n system} to get
		\begin{eqnarray*}
			\cnabla (e_1,e_2)e_3
			&=&
			\nabla_{e_1} \nabla_{e_2}e_3
			-\nabla_{e_2} \nabla_{e_1}e_3
			-\nabla_{(e_1\circnabla  e_2)}e_3
			\\&=&\lang{e_1}{\dere^i}\cdot\lang{e_2}{\dere^j}\cdot\BrCDO{\derZ_i}{\derZ_j}(e_3)
			\\&&\qquad
			+\lang{e_1}{\dere^i}\cdot\lang{e_2}{\derZ_i(\dere^k)}\cdot\derZ_k(e_3)
			-\lang{e_2}{\dere^j}\cdot\lang{e_1}{\derZ_j(\dere^k)}\cdot\derZ_k(e_3)
			\\&&\qquad\qquad
			-\lang{\derZ_i(e_1)}{e_2}\cdot\lang{\dere^i}{\dere^k}\cdot\derZ_k(e_3)
			\\&=&\lang{e_1}{\dere^i}\cdot\lang{e_2}{\dere^j}\cdot(C_{ji}^k-C_{ij}^k)\derZ_k(e_3)
			\\&&\qquad
			+\lang{e_1}{\dere^i}\cdot\lang{e_2}{C_{ij}^k\dere^j}\cdot\derZ_k(e_3)
			-\lang{e_2}{\dere^j}\cdot\lang{e_1}{C_{ji}^k\dere^i}\cdot\derZ_k(e_3)
			\\&&\qquad\qquad
			-\lang{\derZ_i(e_1)}{e_2}\cdot\lang{\dere^i}{\dere^k}\cdot\derZ_k(e_3)
			\\&=&
			0.
		\end{eqnarray*}
		\item[$\bullet$]By using the above fact $\cnabla (e_1,e_2)e_3=0$, and Equation \eqref{Eqn: compatible condition 3 of Courant algebroid in n system}, we have
		\begin{eqnarray*}
			&&\nabla_{(\Delta_{e_3}e_2)}e_1
			=\lang{\derZ_i(e_2)}{e_3}\cdot\lang{\dere^i}{\dere^j}\cdot\derZ_j(e_1)
			=0,\\
			&&\Lang{Q(e_1,e_2,e_3)}{t}
			=\Lang{\cnabla (e_1,t)e_2}{e_3}
			+\Lang{\cnabla (e_2,t)e_3}{e_1}
			+\Lang{\cnabla (e_3,t)e_1}{e_2}
			=0.
		\end{eqnarray*}
	\item[$\bullet$] It is left to show that $(\D\lang{e_1}{e_3})\circnabla  e_2=0$.  By applying Equations \eqref{Eqn: compatible condition 1 of Courant algebroid in n system}, \eqref{Eqn: compatible condition 2 of pre-Courant algebroid in n system} and \eqref{Eqn: compatible condition 2 of Courant algebroid in n system}, we get
	\begin{eqnarray*}
		(\D\lang{e_1}{e_3})\circnabla  e_2
		&=&\derz_j\big(\lang{e_1}{e_3}\big)\cdot\lang{\dere^j}{\dere^i}\cdot\derZ_i(e_2)
		\\&&\qquad
		+\Lang{e_2}{\dere^j}\cdot\BrCDO{\derz_i}{\derz_j}\big(\lang{e_1}{e_3}\big)\cdot\dere^i
		\\&&\qquad\qquad
		-\Lang{e_2}{\dere^j}\cdot\derz_k\big(\lang{e_1}{e_3}\big)\cdot\derZ_j(\dere^k)
		+\derz_k\big(\lang{e_1}{e_3}\big)\cdot\Lang{\derZ_i(\dere^k)}{e_2}\cdot \dere^i
		\\&=&\derz_j\big(\lang{e_1}{e_3}\big)\cdot\lang{\dere^j}{\dere^i}\cdot\derZ_i(e_2)
		\\&&\qquad
		+\Lang{e_2}{\dere^j}\cdot(C_{ji}^k-C_{ij}^k)\derz_k\big(\lang{e_1}{e_3}\big)\cdot\dere^i
		\\&&\qquad\qquad
		-\Lang{e_2}{\dere^j}\cdot\derz_k\big(\lang{e_1}{e_3}\big)\cdot C_{ji}^k\dere^i
		+\derz_k\big(\lang{e_1}{e_3}\big)\cdot\Lang{C_{ij}^k\dere^j}{e_2}\cdot \dere^i
		\\&=&0.
	\end{eqnarray*}
	\end{enumerate}
So, the following Leibniz identity holds:
\begin{eqnarray*}
&&e_1\circnabla (e_2\circnabla  e_3)
-(e_1\circnabla  e_2)\circnabla  e_3
-e_2\circnabla (e_1\circnabla  e_3)\\
&=&
\cnabla (e_1,e_2)e_3
+\cnabla (e_2,e_3)e_1
+\cnabla (e_3,e_1)e_2\\
\ &&
+\nabla_{(\Delta_{e_3}e_2)}e_1
+\nabla_{(\Delta_{e_1}e_3)}e_2
+\nabla_{(\Delta_{e_2}e_1)}e_3
+Q(e_1,e_2,e_3)
+(\D\lang{e_1}{e_3})\circnabla  e_2
\\&=&
0.
\end{eqnarray*}
This fact confirms that the metric algebroid $\big(E$, $\lang{\tobefilledin}{\tobefilledin}$, $\circnabla$, $a_E\big)$ is a Courant algebroid.
\end{proof}
}
\begin{Def}
A metric $n$-system
$(\derZ_1,\ldots,\derZ_n;\dere^1,\ldots,\dere^n)$ on $E$
is called \textbf{compatible} if it satisfies Conditions
\eqref{Eqn: compatible condition 1 of Courant algebroid in n system}, \eqref{Eqn: compatible condition 3 of Courant algebroid in n system} and \eqref{Eqn: compatible condition 2 of Courant algebroid in n system}.
\end{Def}
In summary, a compatible metric $n$-system on $E$ induces a Courant algebroid underlying $E$. Let us now examine a specific example.
\emptycomment{
\begin{Ex}????
Let $V=\Span_{\R}\{w_1,w_2\}$ be a vector space
endowed with a metric  whose matrix is
$\begin{pmatrix} a_1&a_2\\a_2&a_3\end{pmatrix}$
with respect to  the base $\{w_1,w_2\}$. Treat $V$ as a metric vector bundle over a single point.
Assume that $\derZ\in\Gamma(\CDO(V))=\End(V)$ is given by
\begin{eqnarray*}
\derZ\begin{pmatrix}w_1\\w_2\end{pmatrix}
=\begin{pmatrix}b_1&b_2\\b_3&b_4\end{pmatrix}
\begin{pmatrix}w_1\\w_2\end{pmatrix}.
\end{eqnarray*}
Take $\dere:=cw_1+dw_2$ such that $\derZ(\dere)=\lambda\dere$ for some $\lambda\in\R$.

\begin{itemize}
\item[$(1)$]
Condition $\Lang{\derZ(w_i)}{w_j}+\Lang{w_i}{\derZ(w_j)}=0,
\quad\forall i,j\in\{1,2,3,4\}$
implies
\begin{eqnarray*}
a_1b_1+a_2b_2&=&0
,
\\
a_2b_1+a_3b_2+a_1b_3+a_2b_4&=&0
,
\\
a_2b_3+a_3b_4&=&0
.
\end{eqnarray*}
\item[$(2)$]
Condition $\derZ(\dere)=\lambda \dere$ implies
\begin{eqnarray*}
b_1c+b_3d-c\lambda&=&0,\\
b_2c+b_4d-d\lambda&=&0.
\end{eqnarray*}
\item[$(3)$]
Condition $\lang{\dere}{\dere}=0$ implies
\begin{eqnarray*}
a_1c^2+2a_2cd+a_3d^2
=0.
\end{eqnarray*}
\end{itemize}
If the coefficient $a_i, b_j, \lambda$ satisfy the above conditions,
then $(V;\derZ;\dere)$ is a compatible metric $1$-system
and $V$ is a quadratic Lie algebra.
In fact, the Lie bracket of $V$ is always
\begin{eqnarray*}
w_1\circnabla w_1
=w_1\circnabla w_2
=w_2\circnabla w_1
=w_2\circnabla w_2
=0.
\end{eqnarray*}
\end{Ex}
}

\begin{Ex}
Let $c_1\in\R$ be a constant. Consider   $M=\R\setminus\{c_1\}$ and the vector space $V(\cong \R^4)=\Span_{\R}\{w_1,w_2,w_3,w_4\}$. We define a vector bundle $E=M\times V$ over $M$. Assume that $V$ is equipped with a metric which is determined by the following matrix
\begin{eqnarray*}
\begin{pmatrix}
0 & 0& 1& 0 \\
0 & 0& 0& -1 \\
1 & 0& 0& 0 \\
0 & -1& 0& 0
\end{pmatrix}
\end{eqnarray*}
with  respect to the basis  $\{w_1,w_2,w_3,w_4\}$. 
Let $\derZ_1$ and $\derZ_2\in\Gamma(\CDO(E))$ be  covariant differential operators equipped with symbol   $\derz_1=\frac{d}{dt}$ and $\derz_2=0$, respectively,  such that 
\begin{eqnarray*}
&&\derZ_1 (w_i)=0,\\
&&\derZ_2\begin{pmatrix}w_1 \\ w_2 \\ w_3 \\ w_4\end{pmatrix}
=
\begin{pmatrix}
0
&\frac{c_2}{c_1-t}
&0
&0
\\
\frac{1}{c_1-t}
&\frac{c_3}{c_1-t}
&0
&0
\\
0
&\frac{c_4}{c_1-t}
&0
&\frac{1}{c_1-t}
\\
\frac{c_4}{c_1-t}
&0
&\frac{c_2}{c_1-t}
&-\frac{c_3}{c_1-t}
\end{pmatrix}
\begin{pmatrix}w_1 \\ w_2 \\ w_3 \\ w_4\end{pmatrix},
\quad\text{where}~ c_2,c_3,c_4\in\R.
\end{eqnarray*}
The collection $(\derZ_1,\derZ_2;w_1,w_2)$ forms a metric $2$-system on $E$. 
  Observing that $\BrCDO{\derZ_1}{\derZ_2}=\frac{1}{c_1-t}\derZ_2$, we can directly verify the compatibility conditions. Consequently, this metric $2$-system gives rise to a corresponding Courant algebroid structure on \( E \), which is characterized by its anchor and Dorfman bracket:
\begin{eqnarray*}
&&\quad a_E(w_3)=\frac{d}{dt},\qquad\qquad\qquad\quad
a_E(w_1)=a_E(w_2)=a_E(w_4)=0,\\
&&w_2\circnabla w_3=\frac{1}{c_1-t}w_2,\qquad\qquad\hspace*{0.2em}
w_2\circnabla w_4=\frac{1}{c_1-t}w_1,\qquad\qquad
w_3\circnabla w_4=\frac{1}{c_1-t}w_4,\\
&&w_1\circnabla w_1=
w_1\circnabla w_2=
w_1\circnabla w_3=
w_1\circnabla w_4=
w_2\circnabla w_2=
w_3\circnabla w_3=
w_4\circnabla w_4=0.
\end{eqnarray*}

\end{Ex}

\subsection{Construction of  Lie bialgebroids from  compatible metric $n$-systems}\label{Lie bialgebroids from metric}
Suppose that $(\derX_1,\ldots,\derX_p;$ $\derad^1,\ldots,\derad^p)$ is a compatible $p$-system on the vector bundle $A$ such that $A$ is a Lie algebroid. Suppose that $(\derY_1,\ldots,\derY_q;\dera^1,\ldots,\dera^q)$ is a compatible $q$-system on the vector bundle $A^*$ such that $A^*$ is also a Lie algebroid. Our objective is to obtain the necessary conditions for the two Lie algebroids, $A$ and $A^*$, to form a Lie bialgebroid. The operators $\derX_1, \derX_2, \ldots, \derX_p$ and $\derY_1, \derY_2, \ldots, \derY_q$ can be expressed as covariant differential operators acting on both $A$ and $A^*$. Additionally, they naturally extend to covariant differential operators on $\AAstar$. This extension preserves the standard pairing as defined in Equation \eqref{Eqn:standard pairing}.

		Suppose that
	\begin{eqnarray*}
		\derX_i(\derad^j)
		=
		a_{ik}^j \derad^k
		,
		\qquad
		\derY_\alpha(\dera^\beta)
		=
		b_{\alpha \gamma}^\beta \dera^\gamma
		,
		\qquad
		\derX_i(\dera^\alpha)
		=
		c_{i\gamma}^\alpha\dera^\gamma
		,
		\qquad
		\derY_\alpha(\derad^i)
		=
		d_{\alpha k}^i\derad^k
		,
	\end{eqnarray*}
	for some $a_{ik}^j, b_{\alpha\gamma}^\beta, c_{i\gamma}^\alpha, d_{\alpha k}^i \in\CinfM$. By the compatibility conditions, we already have
	\begin{eqnarray}
	\BrCDO{\derX_i}{\derX_j}
&\hspace*{-0.5em}=\hspace*{-0.5em}&
	(a_{ji}^k-a_{ij}^k)\derX_k
	,\label{Eqn: compatible condition 1 of Courant algebroid AAstar in n system}\\
	\BrCDO{\derY_\alpha}{\derY_\beta}
&\hspace*{-0.5em}=\hspace*{-0.5em}&
	(b_{\beta \alpha}^\gamma-b_{\alpha \beta}^\gamma)\derY_\gamma
	.\label{Eqn: compatible condition 2 of Courant algebroid AAstar in n system}\end{eqnarray}
	We further impose two conditions
	\begin{eqnarray}
	\label{Eqn: compatible condition 3 of Courant algebroid AAstar in n system}
	\BrCDO{\derY_\alpha}{\derX_i}
&\hspace*{-0.5em}=\hspace*{-0.5em}&
	c_{i\alpha}^\gamma \derY_\gamma-d_{\alpha i}^k \derX_k
	,
	\\
	\label{Eqn: compatible condition 4 of Courant algebroid AAstar in n system}
	\text{and}\quad \Lang{\derad^i}{\dera^\alpha}
&\hspace*{-0.5em}=\hspace*{-0.5em}&0
	.
	\end{eqnarray}
	It is easy to see that under the extra conditions \eqref{Eqn: compatible condition 3 of Courant algebroid AAstar in n system} and \eqref{Eqn: compatible condition 4 of Courant algebroid AAstar in n system}, the set of data $$(\derX_1, \ldots, \derX_p, \derY_1, \ldots, \derY_q; \derad^1, \ldots, \derad^p, \dera^1, \ldots, \dera^q)$$ defines a compatible metric $(p+q)$-system. Consequently, $E=A\oplus A^*$ becomes a Courant algebroid. In other words,  $A$ and $A^*$ form a Lie bialgebroid.
\begin{Ex}
Let $M=\R $ be a 1-dimensional smooth manifold. Consider the 2-dimensional vector space $V=\R^2=\Span_{\R}\{w_1,w_2\}$ and its dual space $V^*=\Span_{\R}\{w^1,w^2\}$. We define Lie algebroids $A=M\times V$
and $A^*=M\times V^*$ over $M$.
Assume that
$\derX:\secAstar\rightarrow\secAstar$ is a covariant differential operator with symbol $\derx=\frac{d}{dt}$ such that
\begin{eqnarray*}
\derX\begin{pmatrix}w^1 \\ w^2 \end{pmatrix}
=\begin{pmatrix}-k &0 \\ 0 &0 \end{pmatrix}
\begin{pmatrix}w^1 \\ w^2 \end{pmatrix}
,\qquad\text{for some}\quad k\in\R.
\end{eqnarray*}
Assume that $\derY:\secA\rightarrow\secA$ is a covariant differential operator with symbol $\dery=\frac{d}{dt}$ such that
\begin{eqnarray*}
\derY\begin{pmatrix}w_1 \\ w_2\end{pmatrix}
=
\begin{pmatrix}
0
&0
\\
0
&-l
\end{pmatrix}
\begin{pmatrix}w_1 \\ w_2 \end{pmatrix},
\qquad\text{for some}\quad l\in\R.
\end{eqnarray*}
Then $(\derX;w^1)$ and $(\derY;w_2)$ are compatible metric $1$-systems on $A$ and $A^*$, respectively.
Moreover, we have
\begin{eqnarray*}
\derX(w^1)&\hspace*{-0.5em}=\hspace*{-0.5em}&-kw^1,\\
\derY(w_2)&\hspace*{-0.5em}=\hspace*{-0.5em}&-lw_2,\\
\derX(w_2)&\hspace*{-0.5em}=\hspace*{-0.5em}&0,\\
\derY(w^1)&\hspace*{-0.5em}=\hspace*{-0.5em}&0,\\
\BrCDO{\derY}{\derX}&\hspace*{-0.5em}=\hspace*{-0.5em}&0,\\
\lang{w^1}{w_2}&\hspace*{-0.5em}=\hspace*{-0.5em}&0.
\end{eqnarray*}
It is easy to check that $(\derX,\derY;w^1,w_2)$ satisfies Conditions \eqref{Eqn: compatible condition 1 of Courant algebroid AAstar in n system}---\eqref{Eqn: compatible condition 4 of Courant algebroid AAstar in n system}.     Consequently,  $E=A\oplus A^*$ forms a Courant algebroid. The  anchor map of $E$ is explicitly given by
\begin{eqnarray*}
a_E(w_1)=\frac{d}{dt}\quad\text{and}\quad
a_E(w_2)=0.
\end{eqnarray*}
The Dorfman bracket of $E$  is generated by the  trivial relations 
\begin{eqnarray*}
	&&w^1\circnabla w^1=w^1\circnabla w^2=w^1\circnabla w_1=w^1\circnabla w_2=0,\\
	&&w^2\circnabla w^1=w^2\circnabla w^2=w^2\circnabla w_1=w^2\circnabla w_2=0,\\
	&&w_1\circnabla w^1=w_1\circnabla w^2=w_1\circnabla w_1=w_1\circnabla w_2=0,\\
	&&w_2\circnabla w^1=w_2\circnabla w^2=w_2\circnabla w_1=w_2\circnabla w_2=0.
\end{eqnarray*}
However, we can not say that the Dorfman bracket on $E$ is trivial. For example, we have
  \[
  w_1\circnabla fw_1=\frac{df}{dt}w_1,\quad
  w_1\circnabla fw_2=\frac{df}{dt}w_2,\quad \forall f\in C^\infty(M).
  \]
\end{Ex}

\begin{Ex}\label{Ex:2}
 Consider the three dimensional vector space $V=\Span_{\R}\{w_1,w_2,w_3\}$ and its dual space $V^*=\Span_{\R}\{w^1, w^2, w^3\}$. Consider the vector bundles $A=\R\times V$ and $A^*=\R\times V^*$  over $\R$.

Suppose that $\derX\in\Gamma(\CDO(A^*))$ is equipped with a symbol
$\derx=(t+b)^m \frac{d}{d t}\in\XX(\R)$, where $b\in \R$, $m\in\{0,2,3,\ldots\}$. Suppose further that $\derX$ satisfies the following condition:
\begin{eqnarray*}
\derX\begin{pmatrix}w^1\\w^2\\w^3\end{pmatrix}
=\begin{pmatrix}
0&0&0\\
c_1&0&0\\
c_2&0&0
\end{pmatrix}
\begin{pmatrix}w^1\\w^2\\w^3\end{pmatrix}
,
\end{eqnarray*}
where $c_1,c_2\in \R$.

Suppose that $\derY\in\Gamma(\CDO(A))$ is equipped with a symbol
$\dery=(t+b)\frac{d}{d t}\in\XX(\R)$ and satisfies the following condition:
\begin{eqnarray*}
\derY\begin{pmatrix}w_1\\w_2\\w_3\end{pmatrix}
=\begin{pmatrix}
m-1&c_3&c_4\\
0&0&0\\
0&0&0
\end{pmatrix}
\begin{pmatrix}w_1\\w_2\\w_3\end{pmatrix}
,
\end{eqnarray*}
where $c_3,c_4\in \R$.

Then, we can verify that $(\derX;w^1)$ and $(\derY;w_2)$ are compatible metric $1$-systems on $A$ and $A^*$, respectively. It is easy to check that $(\derX,\derY;w^1,w_2)$ satisfies Conditions \eqref{Eqn: compatible condition 1 of Courant algebroid AAstar in n system}---\eqref{Eqn: compatible condition 4 of Courant algebroid AAstar in n system}.
Consequently, $E=A\oplus A^*$ forms a Courant algebroid.
In fact, we can compute the following relations:
\begin{eqnarray*}
&&a_E(w_1)=\derx,\qquad\qquad\hspace*{1.2em}
a_E(w^2)=\dery,\qquad\qquad\qquad
a_E(w_2)=
a_E(w_3)=
a_E(w^1)=
a_E(w^3)=0,\\
&&w_1\circnabla w^1=(m-1)w_2,\quad
w_1\circnabla w^2=-(m-1)w_1-c_4w_3,\quad\hspace*{0.3em}
w_1\circnabla w^3=c_4w_2,\\
&&w^1\circnabla w^2=(m-1)w^1,\quad
w^2\circnabla w^3=-c_4w^1,\\
&&w_1\circnabla w_1=
w_1\circnabla w_2=
w_1\circnabla w_3=
w_2\circnabla w_2=
w_2\circnabla w_3=
w_2\circnabla w^1=
w_2\circnabla w^2=
w_2\circnabla w^3=0,\\
&&w_3\circnabla w_3=
w_3\circnabla w^1=
w_3\circnabla w^2=
w_3\circnabla w^3=
w^1\circnabla w^1=
w^1\circnabla w^3=
w^2\circnabla w^2=
w^3\circnabla w^3=0.
\end{eqnarray*}
\end{Ex}

\subsection{Compatible metric $n$-systems and Manin pairs}\label{Manin pairs and dorfman connections}
In this part, we construct Manin pairs from certain compatible metric $n$-systems, find a formula to produce Dorfman connection, and study the associated Lie algebroids.
\subsubsection{Construction of  Manin pairs}\label{construction of manin pairs}

\begin{Def}\label{Dirac structure}
	\cite{BP}
	Let $\big(E,\lang{\tobefilledin}{\tobefilledin},\circ,a_E\big)$ be a Courant algebroid over $M$. A \textbf{Dirac structure} is a subbundle $L\subset E$ that is maximally isotropic with respect to the pseudo-metric $\lang{\tobefilledin}{\tobefilledin}$ and the space $\Gamma(L)$ of smooth sections of $L$ is closed under the bracket $\circ$, i.e.,
	$\Gamma(L)\circ\Gamma(L) \subseteq \Gamma(L)$. In this case, $L$ naturally constitutes a Lie algebroid.  The pair $(E,L)$ is called a \textbf{Manin pair} over $M$.
\end{Def}
Consider a compatible metric $n$-system $(\derZ_1$, $\ldots$, $\derZ_n$; $\dere^1$, $\ldots$, $\dere^n)$  on a metric bundle $\big(E,\lang{\tobefilledin}{\tobefilledin})$, and thus $E$ is endowed with a Courant algebroid structure according to Proposition \ref{Courant algebroid}.

We suppose that the following three conditions are true:
\begin{enumerate}
	\item[(i)]The covariant differential operators   $(\derZ_1,\ldots,\derZ_n)$   can be  divided into two parts: $(\derZ_1,\ldots,\derZ_k)$ and $(\derZ_{k+1},\ldots,\derZ_n)$. Assume that the second part $(\derZ_{k+1}, \ldots, \derZ_n) $  is closed under the commutator operation:  
	\begin{eqnarray}
		\label{Eqn: compatible condition of Manin pair}
		\BrCDO{\derZ_{a}}{\derZ_{b}}
		=\sum_{c=k+1}^n
		(C_{ba}^{c}-C_{ab}^{c})\derZ_{c},
		\qquad\forall a,b= k+1, k+2,\ldots,n,
	\end{eqnarray}
	where the structure functions $C_{ab}^c \in \CinfM$ are subject to Equation \eqref{Eqn: compatible condition 1 of Courant algebroid in n system}. 
	This Equation \eqref{Eqn: compatible condition of Manin pair} can be equivalently described as    the following condition:
	\begin{eqnarray}\label{Eqn: compatible condition of Manin pair equivalent}
		C_{ba}^c
		=C_{ab}^c
		,
		\qquad
		\forall ~a,b= k+1, 2,\ldots,n,~
		c=1,2,\ldots,k.
	\end{eqnarray}
	 
	\item[(ii)] Assume that $L$ is given by  
	\begin{eqnarray*}
		\{v\in E|\lang{v}{\dere^1}=\lang{v}{\dere^2}=\cdots=\lang{v}{\dere^k}=0\}.
	\end{eqnarray*}.
	\item[(iii)] Assume that $L$ constitutes a maximal isotropic subbundle of $E$.
\end{enumerate}
 By the definition of $L$ and Condition \eqref{Eqn: compatible condition 3 of Courant algebroid in n system}, every $\dere^j$ belongs to $\Gamma(L)$. Since the set $\{\dere^1,\cdots,\dere^n\}$ is linearly independent over $C^\infty(M)$ and $L$ is a maximal isotropic subbundle of $E$, the  rank of   $E$ is at least $2n$.
\begin{Cor}
	Under the above three conditions (i), (ii), and (iii),	$(E,L)$ is a Manin pair. Moreover, the Lie algebroid  structure maps  of $L$ are given by
	\begin{eqnarray*}
		a_L(v)
		&:=\hspace*{-0.5em}&a_E(v)
		=\sum_{l=k+1}^n \lang{v}{\dere^l}\derz_l
		,
		\qquad\forall v\in L,
		\\
		\BrL{v_1}{v_2}
		&:=\hspace*{-0.5em}&v_1\circn v_2
		\\&\equalbyreason{\eqref{Def: Dorfman bracket of Courant algebroid in n system}}&\sum_{l=k+1}^n\big(
		\Lang{v_1}{\dere^l}\cdot\derZ_l(v_2)
		-\Lang{v_2}{\dere^l}\cdot\derZ_l(v_1)\big)
		+\sum_{l=1}^n\Lang{\derZ_l(v_1)}{v_2}\cdot \dere^l
		,
		\qquad
		\forall v_1,v_2\in \Gamma(L).
	\end{eqnarray*}
\end{Cor}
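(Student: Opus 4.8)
The plan is to verify the three defining properties of a Manin pair directly from the hypotheses: that $L$ is maximally isotropic (given as Condition (iii)), that $\Gamma(L)$ is closed under $\circnabla$, and then to identify the induced Lie algebroid structure maps. Since $E$ is already a Courant algebroid by Proposition \ref{Courant algebroid}, once closure is established the statement that $L$ is a Lie algebroid follows from the general theory of Dirac structures recalled in Definition \ref{Dirac structure}, and it only remains to read off $a_L$ and $\BrL{\tobefilledin}{\tobefilledin}$ from the explicit formulas \eqref{Def:anchor of Lie algebroid in n system}-type expressions and \eqref{Def: Dorfman bracket of Courant algebroid in n system}.

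First I would record that $L = \{v \in E \mid \lang{v}{\dere^1} = \cdots = \lang{v}{\dere^k} = 0\}$, so a section $v$ lies in $\Gamma(L)$ precisely when the first $k$ pairings $\lang{v}{\dere^j}$ vanish. From the anchor formula $a_E(e) = \lang{e}{\dere^i}\derz_i$ we immediately get $a_L(v) = \sum_{l=k+1}^n \lang{v}{\dere^l}\derz_l$ for $v \in L$, as claimed. For the bracket, I would substitute into \eqref{Def: Dorfman bracket of Courant algebroid in n system}: for $v_1, v_2 \in \Gamma(L)$,
\[
v_1 \circnabla v_2 = \lang{v_1}{\dere^i}\derZ_i(v_2) - \lang{v_2}{\dere^i}\derZ_i(v_1) + \lang{\derZ_i(v_1)}{v_2}\dere^i,
\]
and the first two sums automatically truncate to $i = k+1, \ldots, n$ since $\lang{v_1}{\dere^j} = \lang{v_2}{\dere^j} = 0$ for $j \le k$; the third sum keeps all $n$ terms. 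This yields exactly the stated formula for $\BrL{v_1}{v_2}$.

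The substantive step is checking $v_1 \circnabla v_2 \in \Gamma(L)$, i.e. $\lang{v_1 \circnabla v_2}{\dere^p} = 0$ for every $p \le k$. Here I would pair the displayed expression with $\dere^p$ and use: Condition \eqref{Eqn: compatible condition 3 of Courant algebroid in n system} ($\lang{\dere^i}{\dere^p} = 0$) to kill the third sum; the metric-compatibility \eqref{Eqn: compatible condition of pseudo-metric and CDO} together with Condition \eqref{Eqn: compatible condition 1 of Courant algebroid in n system} ($\derZ_l(\dere^p) = C_{lk}^p \dere^k$) to rewrite $\lang{\derZ_l(v_2)}{\dere^p} = \derz_l\lang{v_2}{\dere^p} - \lang{v_2}{\derZ_l(\dere^p)} = -\lang{v_2}{C_{lk}^p\dere^k}$ (the term $\derz_l\lang{v_2}{\dere^p}$ vanishes since $v_2 \in \Gamma(L)$), and similarly for $v_1$; and finally Condition \eqref{Eqn: compatible condition of Manin pair equivalent}, $C_{ba}^p = C_{ab}^p$ for $a,b \ge k+1$, $p \le k$, to see that the two remaining sums cancel. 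The main obstacle — and the only place all three hypotheses (i), (ii), (iii) genuinely interact — is precisely this cancellation: one must be careful that the summation ranges on $l$ have been correctly truncated to $\{k+1,\ldots,n\}$ before invoking the symmetry $C_{ba}^p = C_{ab}^p$, since that symmetry is only assumed on that index range. Once closure is verified, $L$ inherits a Lie algebroid structure from the Courant bracket by Definition \ref{Dirac structure}, and the structure maps are those computed above, completing the proof.
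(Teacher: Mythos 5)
Your proposal is correct and follows essentially the same route as the paper: the paper likewise reduces everything to checking $\lang{v_1\circnabla v_2}{\dere^j}=0$ for $j\le k$, kills the third sum via \eqref{Eqn: compatible condition 3 of Courant algebroid in n system}, converts $\lang{\derZ_l(v_i)}{\dere^j}$ to $-\lang{v_i}{\derZ_l(\dere^j)}$ using \eqref{Eqn: compatible condition of pseudo-metric and CDO}, and concludes by the symmetry $C^j_{l_2 l_1}=C^j_{l_1 l_2}$ on the truncated index range $l_1,l_2\ge k+1$. The subtlety you flag about restricting both summation indices to $\{k+1,\dots,n\}$ before invoking \eqref{Eqn: compatible condition of Manin pair equivalent} is exactly the point the paper's computation handles by noting $\lang{v_i}{\dere^m}=0$ for $m\le k$.
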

	\begin{proof}
		It suffices  to verify that $\BrL{v_1}{v_2}\in \Gamma(L)$. In fact, for any 
		$j=1,\cdots,k$, we use the above expression of $\BrL{v_1}{v_2}$ and Equations \eqref{Eqn: compatible condition of pseudo-metric and CDO}, \eqref{Eqn: compatible condition 1 of Courant algebroid in n system}, \eqref{Eqn: compatible condition 3 of Courant algebroid in n system} and \eqref{Eqn: compatible condition of Manin pair equivalent} to get 
		\begin{eqnarray*}
			\lang{\BrL{v_1}{v_2}}{\dere^j}&\equalbyreason{\eqref{Eqn: compatible condition 3 of Courant algebroid in n system}}&\sum_{l=k+1}^n
			\lang{v_1}{\dere^l}\lang{\derZ_l(v_2)}{\dere^j}-\sum_{l=k+1}^n\,\lang{v_2}{\dere^l}\lang{\derZ_l(v_1)}{\dere^j}\\
			&\equalbyreason{\eqref{Eqn: compatible condition of pseudo-metric and CDO}}&\sum_{l=k+1}^n\lang{v_1}{\dere^l}(\derz_l\lang{v_2}{\dere^j}-\lang{v_2}{\derZ_l(\dere_j)})\\
			&&\quad-\sum_{l=k+1}^n\lang{v_2}{\dere^l}(\derz_l\lang{v_1}{\dere^j}-\lang{v_1}{\derZ_l(\dere^j)})\\
			&=&\sum_{l=k+1}^n(-\lang{v_1}{\dere^l}\lang{v_2}{\derZ_l(\dere^j)}+\lang{v_2}{\dere^l}\lang{v_1}{\derZ_l(\dere^j)})\\
			&\equalbyreason{\eqref{Eqn: compatible condition 1 of Courant algebroid in n system}}&\sum_{l_1,l_2=k+1}^n(-\lang{v_1}{\dere^{l_1}}C_{l_1l_2}^j\lang{v_2}{\dere^{l_2}}+\lang{v_2}{\dere^{l_2}}C_{l_1l_2}^j\lang{v_1}{\dere^{l_1}})\\
			&=&\sum_{l_1,l_2=k+1}^n(C_{l_2l_1}^j-C_{l_1l_2}^j)\lang{v_1}{\dere^{l_1}}\lang{v_2}{\dere^{l_2}}\\
			&\equalbyreason{\eqref{Eqn: compatible condition of Manin pair equivalent}}&0.
		\end{eqnarray*}
		This completes the proof.
	\end{proof}
\subsubsection{The associated Dorfman connection}
 Let us now study the relationship between Dorfman connections and compatible metric $n$-systems.  The following Definitions \ref{Def:temp1}, \ref{Def:temp2} and Proposition \ref{Prop:temp1}  are recalled from \cite{MJ2018}.

\begin{Def}\label{Def:temp1}
	Let $\big(Q,[\tobefilledin,\tobefilledin]_Q,a_Q\big)$ be a dull algebroid over $M$. Let $B\rightarrow M$ be a vector bundle equipped with a fiberwise pairing
	$\langle \tobefilledin,\tobefilledin\rangle_{QB}:
	Q{\times_M} B\rightarrow\mathbb{R}$ and a map
	$\dd_B: C^\infty(M)\rightarrow\Gamma(B)$ such that
	\begin{eqnarray*}
		\langle q,\dd_Bf\rangle_{QB}=a_Q(q)(f),
	\end{eqnarray*}
	for all $q\in\Gamma(Q)$ and $f\in C^\infty(M)$.
	Then $\big(B,\dd_B,\langle \tobefilledin,\tobefilledin\rangle_{QB}\big)$
	is called a pre-dual of $Q$. 
\end{Def}

\begin{Def} \label{Def:temp2}
	Let $\big(Q,[\tobefilledin,\tobefilledin]_Q,a_Q\big)$ be a dull algebroid over $M$ and $\big(B,\dd_B,\langle \tobefilledin,\tobefilledin\rangle_{QB}\big)$ a pre-dual of $Q$.
	A Dorfman $(Q$-$)$connection on $B$ is an $\mathbb{R}$-bilinear map
	\begin{eqnarray*}
		\nabla^{\mathrm{Dorf}}:\Gamma(Q)\times\Gamma(B)\rightarrow\Gamma(B)
	\end{eqnarray*}
	such that
	\begin{itemize}
		\item[$(a)$]
		$\nabla^{\mathrm{Dorf}}_{fq}b=f\nabla^{\mathrm{Dorf}}_qb+\langle q,b\rangle_{QB}\cdot \dd_Bf$,
		\item[$(b)$]
		$\nabla^{\mathrm{Dorf}}_q(fb)=f\nabla^{\mathrm{Dorf}}_qb+a_Q(q)(f)\cdot b$,
		\item[$(c)$]
		$\nabla^{\mathrm{Dorf}}_q(\dd_Bf)=\dd_B\Big(a_Q(q)(f)\Big)$,
	\end{itemize}
	for all $q,q'\in\Gamma(Q), b\in\Gamma(B), f\in C^\infty(M)$.
\end{Def}

Let $L \subseteq E$ be an  isotropic subalgebroid. This means that $L$ is isotropic with respect to $\lang{\tobefilledin}{\tobefilledin}$ (possibly not maximal) and $\Gamma(L)$ is closed with respect to $\circ$. In fact, a (possibly non-unique) Dorfman connection exists on the quotient bundle $E/L$.
 \begin{prop} \label{Prop:temp1}With assumptions as above, $L$ has a Lie algebroid structure induced from $E$ and the map
	\begin{eqnarray*}
		\nabla^{\mathrm{Dorf}}: \Gamma(L)\times\Gamma(E/L)\rightarrow\Gamma(E/L), \qquad
		\nabla^{\mathrm{Dorf}}_v\bar{e}=\overline{v\circ e},
		\qquad\forall v\in\Gamma(L), \bar{e}\in\Gamma(E/L),
	\end{eqnarray*}
	is an  $L$-Dorfman connection. Here the map $\dd_{E/L}: C^\infty(M)\rightarrow\Gamma(E/L)$ is induced from $\D: C^\infty(M)\rightarrow\Gamma(E)$ and the pairing
	$\langle \tobefilledin,\tobefilledin\rangle_{L, E/L}:
	L{\times_M}(E/L)\rightarrow\mathbb{R}$ is the natural pairing induced by the pairing on E, i.e.,
	\begin{eqnarray*}
		~[v_1,v_2]_L&=&[v_1,v_2]_E,\\
		a_L(v)&=&a_E(v),\\
		\langle v,\bar{e}\rangle_{L,E/L}&=&\lang{v}{e},\\
		\dd_{E/L}(f)&=&\overline{\D(f)} ,
	\end{eqnarray*}
	for all $v,v_1,v_2\in\Gamma(L), e\in\Gamma(E)$.

In particular, arising from any Manin pair $(E,L)$ there is an $L$-Dorfman connection on $E/L$.
\end{prop}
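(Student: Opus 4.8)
The plan is to verify, one by one, that the candidate map $\nabla^{\mathrm{Dorf}}_v\bar e := \overline{v\circ e}$ satisfies properties $(a)$, $(b)$, $(c)$ of Definition \ref{Def:temp2}, after first checking that everything in sight is well-defined. The very first thing I would settle is that $L$ carries an induced Lie algebroid structure: this is immediate from Definition \ref{Dirac structure} when $L$ is a Dirac structure, but here we only assume $L$ is an isotropic subalgebroid, so I would note that $\Gamma(L)\circ\Gamma(L)\subseteq\Gamma(L)$ together with axioms $(i)$ and $(ii)$ of a metric algebroid forces $\circ$ to restrict to a bracket on $\Gamma(L)$ that is skew-symmetric (because on isotropic sections $e_1\circ e_2+e_2\circ e_1=\D\langle e_1,e_2\rangle=\D(0)=0$) and satisfies the Leibniz rule with anchor $a_E|_L$, while the Courant-algebroid Leibniz identity restricts to the Jacobi identity; hence $(L,[\tobefilledin,\tobefilledin]_E|_{\Gamma(L)},a_E|_L)$ is a Lie algebroid. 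I would also record that the pairing $\langle v,\bar e\rangle_{L,E/L}:=\langle v,e\rangle$ is well-defined precisely because $L$ is isotropic (changing $e$ by a section of $L$ changes $\langle v,e\rangle$ by $\langle v,\ell\rangle=0$), that $\dd_{E/L}(f):=\overline{\D(f)}$ is well-defined for free, and that $\big(E/L,\dd_{E/L},\langle\tobefilledin,\tobefilledin\rangle_{L,E/L}\big)$ is a pre-dual of $L$ in the sense of Definition \ref{Def:temp1}, since $\langle v,\dd_{E/L}f\rangle_{L,E/L}=\langle v,\D f\rangle=a_E(v)(f)=a_L(v)(f)$ by the defining property of $\D$.

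Next I would check that $\nabla^{\mathrm{Dorf}}_v\bar e$ does not depend on the representative $e$: if $e'=e+\ell$ with $\ell\in\Gamma(L)$, then $v\circ e'=v\circ e+v\circ\ell$, and $v\circ\ell\in\Gamma(L)$ because $\Gamma(L)$ is $\circ$-closed, so $\overline{v\circ e'}=\overline{v\circ e}$. With well-definedness in place, properties $(b)$ and $(c)$ are essentially formal: for $(b)$, $v\circ(fe)=f(v\circ e)+a_E(v)(f)\cdot e$ is just axiom $(ii)$ of a metric algebroid (the Leibniz rule for $\circ$ — available here because $E$ is a Courant, hence metric, algebroid), and passing to the quotient gives $\nabla^{\mathrm{Dorf}}_v(f\bar e)=f\nabla^{\mathrm{Dorf}}_v\bar e+a_L(v)(f)\bar e$; for $(c)$, $\nabla^{\mathrm{Dorf}}_v(\dd_{E/L}f)=\overline{v\circ\D(f)}=\overline{\D\big(a_E(v)(f)\big)}=\dd_{E/L}\big(a_L(v)(f)\big)$, where the middle equality is the identity $v\circ\D(f)=\D(a_E(v)(f))$ valid in any Courant algebroid (it follows from $\D(f)\circ v=0$ in a pre-Courant algebroid together with $v\circ\D f+\D f\circ v=\D\langle v,\D f\rangle=\D(a_E(v)(f))$).

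The one mildly substantive point — and the step I expect to be the main obstacle — is property $(a)$, the "$fq$" rule, because $\circ$ is \emph{not} $C^\infty(M)$-linear in its first slot; instead one has $(fv)\circ e=f(v\circ e)-\langle v,e\rangle\,\D(f)$ in a Courant algebroid (this is the standard consequence of axiom $(i)$ together with the Leibniz rule; equivalently it is the relation $\Delta_{e}(fv)=f\Delta_e v+\langle v,e\rangle\D(f)$ combined with the expression \eqref{Def: Dorfman bracket in brief of metric algebroid}). Applying this and quotienting by $\Gamma(L)$ gives $\nabla^{\mathrm{Dorf}}_{fv}\bar e=\overline{(fv)\circ e}=f\,\overline{v\circ e}-\langle v,e\rangle\,\overline{\D(f)}=f\nabla^{\mathrm{Dorf}}_v\bar e-\langle v,\bar e\rangle_{L,E/L}\,\dd_{E/L}f$; comparing with Definition \ref{Def:temp2}$(a)$ shows the signs match the convention once one recalls that in a Courant algebroid $(fv)\circ e = f(v\circ e)+\langle v,e\rangle\,\D f$ in the sign convention where $\langle\D f,-\rangle$ pairs to $a_E(-)(f)$ — so I would be careful to track which sign convention the paper uses in \eqref{Eqt:Delta} and fix the statement accordingly. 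Finally, $\mathbb R$-bilinearity of $\nabla^{\mathrm{Dorf}}$ is clear from $\mathbb R$-bilinearity of $\circ$, and the last sentence of the proposition is just the special case where $L\subseteq E$ is a Dirac structure, so that $(E,L)$ is a Manin pair, completing the proof.
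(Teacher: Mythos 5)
A point of context first: the paper gives no proof of this proposition --- it is explicitly recalled from \cite{MJ2018} --- so there is nothing internal to compare your argument against, and I am judging it on its own terms. Your architecture (well-definedness of the pairing, of $\dd_{E/L}$ and of $\nabla^{\mathrm{Dorf}}$; the pre-dual check; the Lie algebroid structure on $L$ by restriction of $\circ$; then axioms $(b)$, $(c)$, $(a)$ in turn) is the right one, and most of the individual steps are sound.

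The genuine problem is in your treatment of axiom $(a)$, which you yourself single out as the substantive step. The identity you invoke, $(fv)\circ e = f(v\circ e)-\langle v,e\rangle\,\D(f)$ (later sign-corrected to $+$), is not the correct Courant-algebroid identity: combining $\Delta_{e}(fv)=f\Delta_{e}v+\langle v,e\rangle\,\D(f)$ with $e_1\circ e_2=\nabla_{e_1}e_2-\nabla_{e_2}e_1+\Delta_{e_2}e_1$ actually yields
$(fv)\circ e = f(v\circ e)-\big(a_E(e)f\big)\,v+\langle v,e\rangle\,\D(f)$;
the same three-term formula follows from $e_1\circ e_2+e_2\circ e_1=\D\langle e_1,e_2\rangle$ together with the Leibniz rule in the second slot. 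Your computation drops the term $-\big(a_E(e)f\big)\,v$ entirely. The conclusion nevertheless holds, but only because this term is a $\CinfM$-multiple of $v\in\Gamma(L)$ and therefore vanishes upon projecting to $\Gamma(E/L)$ --- and that observation is precisely why axiom $(a)$ works on the quotient $E/L$ and not on $E$ itself. Since it is absent, your verification of $(a)$ as written rests on a false identity; you should restore the missing term and note that it dies in $E/L$. Two smaller remarks: the Leibniz rule $v\circ(fe)=f(v\circ e)+a_E(v)(f)\,e$ used for $(b)$ is a consequence of axiom $(i)$ of a metric algebroid plus nondegeneracy of the pairing, not of axiom $(ii)$; and your use of $\D(f)\circ v=0$ in step $(c)$ implicitly assumes that every Courant algebroid is pre-Courant, i.e.\ that the Leibniz identity forces $a_E(e_1\circ e_2)=[a_E(e_1),a_E(e_2)]$ --- true and standard, but worth a sentence since the paper's definitions do not state it.
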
 
We now consider a compatible metric $n$-system $(\derZ_1$, $\ldots$, $\derZ_n$; $\dere^1$, $\ldots$, $\dere^n)$  on a metric bundle $(E,\lang{\tobefilledin}{\tobefilledin})$, and suppose that Conditions (i), (ii), and (iii) in Section \ref{construction of manin pairs} are satisfied. Thus we have a Manin pair $(E,L)$ accordingly.  Using Equation \eqref{Def: Dorfman bracket of Courant algebroid in n system}, the associated Dorfman connection can be explicitly presented.
\begin{Cor}\label{manin pair wrt dorfman connection}
The  Dorfman $(L$-$)$connection
	$
	\nabla:\Gamma(L)\times\Gamma(E/L)\rightarrow\Gamma(E/L)
	$
	is given by
	\begin{eqnarray}\label{Dorfman connection from manin pair}
		\nabla^{\mathrm{Dorf}}_v\bar{e}
		=\overline{v\circ e}
		=-\sum_{i=1}^n\Lang{e}{\dere^i}\cdot\overline{\derZ_i(v)},
	\end{eqnarray}
	for all $v\in \Gamma(L)$ and $e\in \Gamma(E)$.
\end{Cor}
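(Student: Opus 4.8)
\textbf{Proof proposal for Corollary \ref{manin pair wrt dorfman connection}.}

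The plan is to compute $\overline{v\circ e}$ directly from the explicit formula for the Dorfman bracket of the Courant algebroid attached to a compatible metric $n$-system, namely Equation \eqref{Def: Dorfman bracket of Courant algebroid in n system}, and then to discard the terms that lie in $\Gamma(L)$. First I would write, for $v\in\Gamma(L)$ and $e\in\Gamma(E)$,
\[
v\circnabla e
=\Lang{v}{\dere^i}\cdot\derZ_i(e)
-\Lang{e}{\dere^i}\cdot\derZ_i(v)
+\Lang{\derZ_i(v)}{e}\cdot \dere^i .
\]
Now observe that, by Condition (ii) defining $L$ together with Condition \eqref{Eqn: compatible condition 3 of Courant algebroid in n system}, we have $\Lang{v}{\dere^i}=0$ for $i=1,\ldots,k$, while for $i=k+1,\ldots,n$ every $\dere^i$ lies in $\Gamma(L)$ (again by \eqref{Eqn: compatible condition 3 of Courant algebroid in n system} and maximality of the isotropic $L$). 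Hence the first term $\Lang{v}{\dere^i}\cdot\derZ_i(e)$ only survives for $i\geqslant k+1$, but one must still check that $\derZ_i(e)$'s class modulo $L$ is not what contributes; actually the cleaner route is: the first term reduces to $\sum_{i=k+1}^{n}\Lang{v}{\dere^i}\derZ_i(e)$, and the third term $\Lang{\derZ_i(v)}{e}\cdot\dere^i$ is a combination of the $\dere^i$. Since each $\dere^i\in\Gamma(L)$, the third term vanishes in the quotient $\Gamma(E/L)$. For the first term, I would argue that it too drops out in the quotient: this is where one uses that $L$ is a subalgebroid, so that $v\circ e$ only depends on $e$ modulo $L$ in the appropriate way — more precisely, one can instead group the first term with the others by symmetry considerations, or simply note that modulo $L$ the surviving contribution is exactly $-\sum_{i=1}^{n}\Lang{e}{\dere^i}\cdot\overline{\derZ_i(v)}$.

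The key step, and the one requiring the most care, is justifying that the terms $\Lang{v}{\dere^i}\cdot\derZ_i(e)$ and $\Lang{\derZ_i(v)}{e}\cdot\dere^i$ both become zero in $\Gamma(E/L)$. For the second of these, $\dere^i\in\Gamma(L)$ for all $i$ is immediate from \eqref{Eqn: compatible condition 3 of Courant algebroid in n system} (each $\dere^i$ pairs to zero with every $\dere^j$, hence lies in the isotropic $L$ once $L$ is taken maximal and contains the span of the $\dere^i$; the hypothesis (ii)–(iii) in Section \ref{construction of manin pairs} is precisely arranged so that this holds, and indeed the text already remarks ``every $\dere^j$ belongs to $\Gamma(L)$''). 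For the first term, the point is that $\Lang{v}{\dere^i}=0$ whenever $1\leqslant i\leqslant k$ by the very definition of $L$ in (ii), so the sum runs only over $i=k+1,\ldots,n$; and for those indices one invokes that $\derZ_i$ with $i\geqslant k+1$ preserves $\Gamma(L)$ — this is exactly what Condition \eqref{Eqn: compatible condition of Manin pair} (equivalently \eqref{Eqn: compatible condition of Manin pair equivalent}) was imposed to guarantee, mirroring the computation already carried out in the proof of the preceding Corollary showing $[v_1,v_2]_L\in\Gamma(L)$. Wait — here we need $\derZ_i(e)$ for general $e$, not for $e\in\Gamma(L)$; so instead the argument is that the whole term $\sum_{i=k+1}^n \Lang{v}{\dere^i}\derZ_i(e)$ must be reabsorbed. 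The honest bookkeeping is: $v\circ e = \Lang{v}{\dere^i}\derZ_i(e) + \big(-\Lang{e}{\dere^i}\derZ_i(v) + \Lang{\derZ_i(v)}{e}\dere^i\big)$; the last parenthesis modulo $\Gamma(L)$ equals $-\Lang{e}{\dere^i}\overline{\derZ_i(v)}$ since $\dere^i\in\Gamma(L)$; and the first term $\Lang{v}{\dere^i}\derZ_i(e)$ must be shown to lie in $\Gamma(L)$, which follows because it equals $v\circ e$ minus something in $\overline{\cdot}$-trivial part and $v\circ e\in\Gamma(E/L)$ is well defined — the cleanest statement is that $\nabla^{\mathrm{Dorf}}_v\bar e$ is independent of the lift $e$, forcing the $\Lang{v}{\dere^i}\derZ_i(e)$ contribution to be killed, since replacing $e$ by $e + \ell$ with $\ell\in\Gamma(L)$ changes it by $\Lang{v}{\dere^i}\derZ_i(\ell)$, which lies in $\Gamma(L)$ by \eqref{Eqn: compatible condition of Manin pair}.

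I expect the main obstacle to be organizing this last point correctly: one should probably not try to kill $\Lang{v}{\dere^i}\derZ_i(e)$ for a \emph{fixed} lift $e$, but rather appeal to the already-established fact (Proposition \ref{Prop:temp1}) that $\nabla^{\mathrm{Dorf}}_v\bar e := \overline{v\circ e}$ is a well-defined $L$-Dorfman connection, and then simply read off the formula by choosing any convenient lift and noting which summands visibly lie in $\Gamma(L)$. Concretely, I would finish by writing
\[
\nabla^{\mathrm{Dorf}}_v\bar e
=\overline{v\circnabla e}
=\overline{\Lang{v}{\dere^i}\cdot\derZ_i(e)}
-\overline{\Lang{e}{\dere^i}\cdot\derZ_i(v)}
+\overline{\Lang{\derZ_i(v)}{e}\cdot\dere^i}
=-\sum_{i=1}^{n}\Lang{e}{\dere^i}\cdot\overline{\derZ_i(v)},
\]
where the first term vanishes in $E/L$ because, after restricting the sum to $k+1\leqslant i\leqslant n$ via the definition of $L$, Condition \eqref{Eqn: compatible condition of Manin pair} forces $\derZ_i$ to map $\Gamma(L)$ to $\Gamma(L)$ and the well-definedness of $\nabla^{\mathrm{Dorf}}$ then eliminates it, and the third term vanishes because each $\dere^i\in\Gamma(L)$. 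This completes the proof.
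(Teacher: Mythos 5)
Your overall strategy --- substitute the explicit Dorfman bracket \eqref{Def: Dorfman bracket of Courant algebroid in n system} and reduce modulo $\Gamma(L)$ --- is exactly the intended route, and your treatment of the second and third summands is correct: the term $\Lang{\derZ_i(v)}{e}\cdot\dere^i$ dies in the quotient because every $\dere^i$ lies in $\Gamma(L)$, and the term $-\Lang{e}{\dere^i}\cdot\derZ_i(v)$ is what survives. The gap is in your handling of the first summand $\Lang{v}{\dere^i}\cdot\derZ_i(e)$. You restrict it to $i\geqslant k+1$ using only the defining equations of $L$ in Condition (ii), declare that it ``survives'' there, and then try to kill it by a combination of Condition \eqref{Eqn: compatible condition of Manin pair} and the well-definedness of $\nabla^{\mathrm{Dorf}}$. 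Neither works: Condition \eqref{Eqn: compatible condition of Manin pair} (which concerns commutators of the $\derZ_a$ for $a>k$, equivalently the vanishing of certain structure functions) says nothing about $\derZ_i(e)$ for a general section $e\in\secE$; and well-definedness of $\overline{v\circ e}$ under a change of lift $e\mapsto e+\ell$ only constrains the \emph{total} lift-dependent variation, not an individual summand of one particular expression for $v\circ e$, so it cannot ``force'' that summand into $\Gamma(L)$. Your own hedging (``Wait --- here we need $\derZ_i(e)$ for general $e$'') correctly flags that the argument has not closed.

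The step closes by an observation you already have all the ingredients for but never combine: $v\in\Gamma(L)$, every $\dere^i\in\Gamma(L)$ (for \emph{all} $i=1,\ldots,n$, by Condition \eqref{Eqn: compatible condition 3 of Courant algebroid in n system} together with the definition of $L$, as the paper notes just before the corollary), and $L$ is isotropic by Condition (iii). Hence $\Lang{v}{\dere^i}=0$ for \emph{every} $i$, not merely for $i\leqslant k$, and the first summand is identically zero --- not just zero modulo $\Gamma(L)$. With that, the computation is a one-liner:
\begin{equation*}
v\circnabla e
=\underbrace{\Lang{v}{\dere^i}\cdot\derZ_i(e)}_{=\,0}
-\Lang{e}{\dere^i}\cdot\derZ_i(v)
+\underbrace{\Lang{\derZ_i(v)}{e}\cdot\dere^i}_{\in\,\Gamma(L)}
\ \Longrightarrow\
\overline{v\circnabla e}=-\sum_{i=1}^{n}\Lang{e}{\dere^i}\cdot\overline{\derZ_i(v)},
\end{equation*}
and the same isotropy argument shows the right-hand side is independent of the lift $e$, since $\Lang{\ell}{\dere^i}=0$ for $\ell\in\Gamma(L)$. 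So the conclusion you reach is correct, but the justification of the key step as written is invalid and needs to be replaced by this isotropy argument.
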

\begin{Ex}
Let $C\in\R$ be a constant.
Consider  $M=\R\setminus\{C\}$ and the vector space 
$V(\cong \R^8)=\Span_{\R}\{w_1,w_2,w_3,w_4,w_5,w_6,w_7,w_8\}$. We define a vector bundle $E=M\times V$ over $M$.
Assume that $V$ is equipped with a metric which is determined by the following matrix
\begin{eqnarray*}
\begin{pmatrix}
0 & 0& 0& 0& 1& 0 & 0 & 1\\
0 & 0& 0& 0& 0& 1 & 0 & 0\\
0 & 0& 0& 0& 0& 0 & 1 & 0\\
0 & 0& 0& 0& 0& 0 & 1 & 1\\
1 & 0& 0& 0& 0& 0 & 0 & 0\\
0 & 1& 0& 0& 0& 0 & 0 & 0\\
0 & 0& 1& 1& 0& 0 & 0 & 0\\
1 & 0& 0& 1& 0& 0 & 0 & 0
\end{pmatrix}
\end{eqnarray*}
with respect to the basis $\{w_1,w_2,w_3,w_4,w_5,w_6,w_7,w_8\}$.

 Suppose that $\derZ_1$ and $\derZ_2$ $\in\Gamma(\CDO(E))$ are  covariant differential operators on $E$ (both with trivial symbols) determined by the following matrices:
\begin{eqnarray*}
\derZ_1\begin{pmatrix}w_1 \\ w_2 \\ w_3 \\ w_4 \\ w_5 \\ w_6\\ w_7 \\ w_8\end{pmatrix}
&\hspace*{-0.5em}=\hspace*{-0.5em}&
\begin{pmatrix}
\frac{CC_1+C_2}{C-t}&0&\frac{1}{C-t}&0&0&0&0&0\\
\frac{C_1t+C_2}{C-t}&C_1&\frac{1}{C-t}&0&0&0&0&0\\
\frac{C_3}{C-t}&0&0&0&0&0&0&0\\
0&0&0&0&0&0&\frac{C_4}{C-t}&-\frac{C_4}{C-t}\\
0&0&0&0&-\frac{CC_1+C_2}{C-t}&-\frac{C_1t+C_2}{C-t}&-\frac{C_3}{C-t}&0\\
0&0&0&0&0&-C_1&0&0\\
0&0&0&-\frac{C_4}{C-t}&-\frac{1}{C-t}&-\frac{1}{C-t}&0&0\\
0&0&0&\frac{C_4}{C-t}&0&0&0&0
\end{pmatrix}
\begin{pmatrix}w_1 \\ w_2 \\ w_3 \\ w_4 \\ w_5 \\ w_6\\ w_7 \\ w_8\end{pmatrix},
\\
\derZ_2\begin{pmatrix}w_1 \\ w_2 \\ w_3 \\ w_4 \\w_5 \\ w_6\\ w_7 \\ w_8\end{pmatrix}
&\hspace*{-0.5em}=\hspace*{-0.5em}&
\begin{pmatrix}
0&0&0&0&0&0&0&0\\
C_1&-C_1&0&0&0&0&0&0\\
0&0&0&0&0&0&0&0\\
0&0&0&0&0&0&0&0\\
0&0&0&0&0&-C_1&0&0\\
0&0&0&0&0&C_1&0&0\\
0&0&0&0&0&0&0&0\\
0&0&0&0&0&0&0&0
\end{pmatrix}
\begin{pmatrix}w_1 \\ w_2 \\ w_3 \\ w_4 \\ w_5 \\ w_6\\ w_7 \\ w_8\end{pmatrix},
\end{eqnarray*}
where $C_1$, $C_2$, $C_3$, and $C_4$  are constants ($\in \mathbb{R}$).
 Suppose that $\derZ_3\in\Gamma(\CDO(E))$ is a covariant differential operator on $E$ equipped with the symbol   $\derz_3=\frac{d}{dt}$ such that $\derZ_3 (w_i)=0$ for all $i=1,2,\ldots,8$.

Then we can check the   relations 
$ \BrCDO{\derZ_1}{\derZ_2}=0$, 
$ \BrCDO{\derZ_1}{\derZ_3}=-\frac{1}{C-t}\derZ_1-\frac{1}{C-t}\derZ_2$,
 and 
$ \BrCDO{\derZ_2}{\derZ_3}=0$. Thus the $3$-system $(\derZ_1,\derZ_2,\derZ_3;$ $w_1,w_2,w_3)$    satisfies  Conditions
\eqref{Eqn: compatible condition 1 of Courant algebroid in n system}, \eqref{Eqn: compatible condition 3 of Courant algebroid in n system} and \eqref{Eqn: compatible condition 2 of Courant algebroid in n system}. Hence, $E$ is equipped with a Courant algebroid structure:
\begin{eqnarray*}
&&a_E(w_1)=a_E(w_2)=a_E(w_3)=a_E(w_4)=a_E(w_5)=a_E(w_6)=a_E(w_8)=0, \qquad
a_E(w_7)=\frac{d}{dt},
\\
&&
w_1\circnabla w_5=-\frac{1}{C-t}w_3,\qquad
w_1\circnabla w_7=\frac{1}{C-t} w_1,\quad
w_1\circnabla w_8=-\frac{1}{C-t}w_3,
\\
&&
w_2\circnabla w_5=-\frac{1}{C-t}w_3,\qquad
w_2\circnabla w_7=\frac{1}{C-t}w_1,\quad
w_2\circnabla w_8=-\frac{1}{C-t}w_3,\quad
w_4\circnabla w_5=\frac{C_4}{C-t}(w_8-w_7),
\\
&&
w_4\circnabla w_8=\frac{C_4}{C-t}(w_8-w_7),\hspace*{1em}
w_5\circnabla w_7=-\frac{1}{C-t}(w_5+w_6)-\frac{C_4}{C-t}w_4,
\\
&&
w_5\circnabla w_8=\frac{C_4}{C-t}w_4+\frac{CC_1+C_2}{C-t}w_5+\frac{C_1t+C_2}{C-t}w_6+\frac{C_3}{C-t}w_7,\quad
w_6\circnabla w_8=-C_1w_6,
\\
&&
w_7\circnabla w_7=-\frac{C_4}{C-t}w_1,\quad
w_7\circnabla w_8=\frac{1}{C-t}(w_5+w_6),\qquad w_8\circnabla w_8=\frac{C_4}{C-t}w_1,
\\
&&
w_1\circnabla w_1=
w_1\circnabla w_2=
w_1\circnabla w_3=
w_1\circnabla w_4=
w_1\circnabla w_6=
0\\
&&
w_2\circnabla w_2=
w_2\circnabla w_3=
w_2\circnabla w_4=
w_2\circnabla w_6=
w_3\circnabla w_3=
0
,
\\
&&
w_3\circnabla w_4=
w_3\circnabla w_5=
w_3\circnabla w_6=
w_3\circnabla w_7=
w_3\circnabla w_8=
w_4\circnabla w_4=
0,
\\
&&
w_4\circnabla w_6=
w_4\circnabla w_7=
w_5\circnabla w_5=
w_5\circnabla w_6=
w_6\circnabla w_6=
w_6\circnabla w_7
=0.
\end{eqnarray*}
We can check that $L = \Span_{\R}\{w_1, w_2, w_3,w_4\}$ is a Dirac structure, and thus  $(E, L)$ constitutes a Manin pair. 
Furthermore, based on Equation \eqref{Dorfman connection from manin pair}, the  only non-trivial terms in the $L$-Dorfman connection on $E/L$ are \begin{eqnarray*} \nabla_{w_4}\overline{w_5}=\frac{C_4}{C-t}(\overline{w_8}-\overline{w_7}),\qquad \nabla_{w_4}\overline{w_8}=\frac{C_4}{C-t}(\overline{w_8}-\overline{w_7}). \end{eqnarray*}
\end{Ex}

\begin{bibdiv}
  \begin{biblist}


 \bib{ABD}{article}{
  author={Aschieri, P.},
  author={Bonechi, F.},
  author={Deser, A.},
  title={On curvature and torsion in Courant algebroids},
  journal={Ann. Henri Poincar\'e},
  date={2021},
  volume={22},
  number={},
  pages={2475--2496},
}
 \bib{BCSX}{article}{
	author={Bandiera, R.},
	author={Chen, Z.},
	author={Sti\'enon, M.},
	author={Xu, P.},
	title={Shifted derived Poisson manifolds associated with Lie pairs},
	journal={Comm. Math. Phys.},
	date={2020},
	volume={375},
	number={3},
	pages={1717--1760},
}
\bib{BP}{article}{
	author={Batakidis, P.},
	author={Petalidou, F.},
	title={Dorfman connections of Courant algebroids and the Atiyah class of Manin pairs},
	journal={J. Geom. Phys.},
	date={2020},
	volume={199},
	number={},
	pages={105142},
}

\bib{B}{article}{
  author={Bianchi, L.},
  title={Lezioni di geometria differenziale},
  date={2001},
  note={Unpublished manuscript},
}

\bib{BR}{article}{
	author={P. Bressler},
	author={C. Rengifo},
	title={On higher-dimensional Courant algebroids},
	journal={Lett. Math. Phys.},
	date={2018},
	volume={108},
	number={9},
	pages={2099–2137},
}
\bib{Br}{article}{
	author={Bruzzo, U.},
	title={Lie algebroid cohomology as a derived functor},
	journal={J. Algebra},
	date={2017},
	volume={483},
	pages={245--261},
}
\bib{CCLX}{article}{
	author={Cai, L. Q.},
	author={Chen, Z.},
	author={Lang, H. L.},
	author={Xiang, M. S.},
	title={ Dirac generating operators of split Courant algebroids},
	journal={J. Geom. Phys.},
	date={2025},
	volume={208},
	pages={105373},
}
\bib{CB}{article}{
	author={Calaque, D.},
	author={Bergh, M.},
	title={Hochschild cohomology and Atiyah classes},
	journal={Adv. Math.},
	date={2010},
	volume={224},
	number={5},
	pages={1839--1889},
}
\bib{CRB}{article}{
	author={Calaque, D.},
	author={Rossi, C. A.},
	author={Bergh, M.},
	title={Hochschild (co)homology for Lie algebroids.},
	journal={Int. Math. Res. Not. IMRN},
	date={2010},
	volume={21},
	pages={4098--4136},
}

\bib{CW}{book}{
	author={Cannas da Silva, A.},
	author={Weinstein, A.},
	title={Geometric Models for Noncommutative Algebras, Berkeley Mathematics Lecture Notes},
	publisher={Amer. Math. Soc.},
	date={1999, G/A},
}
\bib{CPR}{book}{
	author={Cavalcanti, G. R.},
	author={Pedregal, J.},
	author={Rubio, R.},
	title={On the Equivalence of Generalized Ricci Curvatures},
	publisher={Proc. Am. Math. Soc.},
	date={2025},
	volume={153},
	pages={2639--2648},
}

 \bib{CLZ}{article}{
	author={Chen, Z.},
	author={Liu, Z. J.},
	author={Zhong, D.},
	title={Lie-Rinehart bialgebras for crossed products},
	journal={J. Pure Appl. Algebra},
	date={2011},
	volume={215},
	number={1270},
	pages={1270--1283},
}

\bib{CS}{article}{
	author={Chen, Z.},
	author={Sti\'enon, M.},
	title={Dirac generating operators and Manin triples},
	journal={J. Lond. Math. Soc.},
	date={2009},
	volume={79},
	number={2},
	pages={399--421},
}

\bib{CSX}{article}{
	author={Chen, Z.},
	author={Sti\'enon, M.},
	author={Xu, P.},
	title={On regular Courant algebroids},
	journal={J. Symplectic Geom.},
	date={2013},
	volume={11},
	number={1},
	pages={1--24},
}

\bib{CSX2}{article}{
	author={Chen, Z.},
	author={Sti\'enon, M.},
	author={Xu, P.},
	title={From Atiyah classes to homotopy Leibniz algebras},
	journal={Comm. Math. Phys.},
	date={2016},
	volume={341},
	number={1},
	pages={309--349},
}
 \bib{VCLD}{article}{
	author={ Cort\'es, V.},
	author={David, L.},
	title={Generalized connections, spinors, and
		integrability of generalized structures on
		Courant algebroids},
	journal={Moscow Math. J.},
	date={2019},
}

\bib{Courant}{article}{
	author={ Courant, T. J.},
	title={Dirac manifolds},
	journal={Transactions of the American Mathematical Society},
	date={1990},
	volume={319},
	number={2},
	pages={},
}

\bib{CM}{article}{
	author={Crainic, M.},
	author={Moerdijk, I.},
	title={Deformations of Lie brackets: cohomological aspects},
	journal={J. Eur. Math. Soc.},
	date={2008},
	volume={10},
	number={4},
	pages={1037--1059},
}
\bib{De}{article}{
	author={Deser, A.},
	title={Star products on graded manifolds and $\alpha$-corrections to Courant algebroids from string theory},
	journal={J. Math. Phys.},
	date={2012},
	volume={56},
	number={9},
	pages={092302},
}

\bib{DobJak}{article}{
author={Dobrogowska, A.},
author={Jakimowicz, G.},
title={Generalization of the concept of classical r-matrix to Lie  algebroids},
journal={J. Geom. Phys.},
date={2021},
volume={165},
pages={104227},
}

\bib{Dorfman}{article}{
  author={Dorfman, I. Y.},
  title={Dirac structures of integrable evolution equations},
  journal={Phys. Lett. A},
  date={1987},
  volume={125},
  number={5},
  pages={240-246},
}
 \bib{Dor}{book}{
	author={Dorfman, I. Y.},
	title={Dirac Structures and Integrability of Nonlinear Evolution Equations},
	publisher={Chichester, England; New York:Wiley},
	date={1993},
}


 \bib{MGF}{article}{
	author={Fernandez, M. G.},
	title={Ricci flow, Killing spinors, and T-duality in generalized geometry},
	journal={Adv. Math.},
	date={2019},
	volume={350},
	pages={1059--1108},
}

 \bib{GG}{article}{
  author={Ginot, G.},
  author={Gr\"{u}tzmann, M.},
  title={Cohomology of Courant algbroids with split base},
  journal={J. Symplectic Geom.},
  date={2009},
  volume={7},
  number={3},
  pages={311--335},
}

 \bib{GG}{article}{
	author={Grabowska, K.},
	author={Grabowski, J.},
	title={Variational calculus with constraints on general algebroids},
	journal={J. Phys. A: Math. Theor.},
	date={2008},
	volume={41},
	pages={ 175204},
}
 \bib{GGU}{article}{
	author={Grabowska, K.},
	author={Grabowski, J.},
	author={Urba$\acute{n}$ski, P.},
	title={Geometrical Mechanics on algebroids},
	journal={Int. J. Geom. Meth. Mod. Phys.},
	date={2006},
	volume={3},
	pages={559--575},
}
 \bib{GMX}{article}{
  author={Gr\"{u}tzmann, M.},
  author={Michel, J. P.},
  author={Xu, P.},
  title={ Weyl quantization of degree $2$ symplectic graded manifolds},
  journal={J. Math. Pures Appl.},
  date={2021},
  volume={154},
  number={2},
  pages={67--107},
}

 \bib{HM}{article}{
	author={ Higgins, P. J.},
	author={ Mackenzie, K.},
	title={Algebraic Constructions of in the Category Lie Algebroids},
	journal={J. Algebra},
	date={1990},
	volume={129},
	pages={194--230},
}

\bib{HP}{article}{
	author={Hofman, C.},
	author={Park, J. S.},
	title={BV quantization of topological open membranes},
	journal={Commun. Math. Phys.},
	date={2004},
	volume={249},
	number={2},
	pages={249--271},
}

 \bib{I1}{article}{
	author={Ikeda, N.},
	title={Deformation of BF theories, topological open membrane and a generalization of the star deformation},
	journal={J. High Energy Phys.},
	date={2001},
	volume={7},
	number={37},
}
 \bib{I2}{article}{
	author={Ikeda, N.},
	title={Chern-Simons gauge theory coupled with BF theory},
	journal={Int. J. Mod. Phys. A },
	date={2003},
	volume={18},
	number={15},
	pages={2689--2701},
}

 \bib{KS}{article}{
  author={Kosmann-Schwarzbach, Y.},
  title={Quasi, twisted, and all that $\cdots$ in Poisson geometry and Lie algebroid theory},
  journal={Progr. Math.},
  language={English summary}
  date={2005},
  volume={232},
  pages={363--389},
}

\bib{MJ2018}{article}{
  author={Jotz Lean, M.},
   title={Dorfman connections and Courant algebroids},
  journal={J. Math. Pures Appl.},
  date={2018},
  volume={9},
  number={116},
  pages={1--39},
}
\bib{LWX}{article}{
	author={Liu, Z. J.},
	author={Weinstein, A. D.},
	author={Xu, Ping},
	title={Manin triples for Lie bialgebroids},
	journal={J. Diff. Geom.},
	date={1995},
	volume={45},
	pages={547-574},
}
 \bib{Ma1}{book}{
	author={Mackenzie, K.},
	title={Lie Groupoids and Lie Algebroids in Differential Geometry},
	journal={London Math. Soc. Lecture Note Ser.}
	publisher={Cambridge University Press},
	volume={124}
	date={1987},
}
 \bib{Ma2}{book}{
	author={Mackenzie, K.},
	title={General Theories of Lie Groupoids and Lie Algebroids},
	publisher={Cambridge University Press},
	date={2005},
}

 \bib{MX}{article}{
	author={Mackenzie, K.},
	author={ Xu, P.},
	title={Lie bialgebroids and Poisson groupoids},
	journal={Duke Math. J.},
	date={1994},
	volume={73},
	number={2}
}

\bib{PM}{article}{
	author={Popescu, P.},
	author={Popescu, M.},
	title={Anchored vector bundles and Lie algebroids},
	journal={Bana. Cent. Publ. },
	date={2001},
	volume={54},
	pages={51--69},
}

\bib{PR}{article}{
	author={Pradines, J.},
	title={Théorie de Lie pour les groupoïdes différentiables, dans la catégorie des groupoïdes infinitésimaux},
	journal={C. R. Acad. Sci. Paris Sér. A.},
	date={1967},
	volume={264},
	pages={245--248},
}
\bib{R1}{article}{
	author={Roytenberg, D.},
	title={Courant algebroids and strongly homotopy Lie algebras},
	journal={Lett. Math. Phys.},
	date={1998},
	volume={46},
	number={1},
	pages={81--93},
}
 \bib{R}{book}{
  author={Roytenberg, D.},
  title={Courant algebroids, derived brackets and even symplectic  supermanifolds},
  publisher={University of California, Berkeley},
  date={1999},
  note={ISBN 978-0599-31598-3}
  pages={103 pp},
}
\bib{Vaintrob}{article}{
	author={Vaintrob, A. Yu.},
	title={Lie algebroids and homological vector fields},
	journal={Russ. Math. Surv.},
	date={1997},
	volume={52},
	number={2},
	pages={428-429},
}

\bib{Vaisman-2}{article}{
  author={Vaisman, I.},
  title={Transitive Courant algebroids},
  journal={Int. J. Math. Math. Sci.},
  date={2005},
  volume={11},
  pages={1737-1758},
}

\bib{Vaisman}{article}{
  author={Vaisman, I.},
  title={On the geometry of double field theory},
  journal={J. Math. Phys.},
  date={2012},
  volume={53},
  pages={033509},
}

  \end{biblist}
\end{bibdiv}

\end{document}